\theoremstyle{plain}
\newtheorem{thm}{Theorem}[section]
\newtheorem{lemma}[thm]{Lemma}
\newtheorem{prop}[thm]{Proposition}
\newtheorem{cor}[thm]{Corollary}
\theoremstyle{definition}
\newtheorem{defn}[thm]{Definition}
\theoremstyle{remark}
\newtheorem{remark}[thm]{Remark}
\newtheorem*{thank}{{\bf Acknowledgments}}
\newcommand{\nc}{\newcommand}
\def\makeop#1{\expandafter\def\csname#1\endcsname
  {\mathop{\rm #1}\nolimits}\ignorespaces}
\DeclareMathOperator{\Irr}{Irr}
\DeclareMathOperator{\GU}{GU}
\def\makebb#1{\expandafter\def
  \csname bb#1\endcsname{{\mathbb{#1}}}\ignorespaces}
\def\makebf#1{\expandafter\def\csname bf#1\endcsname{{\bf
      #1}}\ignorespaces} 
\def\makegr#1{\expandafter\def
  \csname gr#1\endcsname{{\mathfrak{#1}}}\ignorespaces}
\def\makescr#1{\expandafter\def
  \csname scr#1\endcsname{{\EuScript{#1}}}\ignorespaces}
\def\makecal#1{\expandafter\def\csname cal#1\endcsname{{\mathcal
      #1}}\ignorespaces} 
\def\doLetters#1{#1A #1B #1C #1D #1E #1F #1G #1H #1I #1J #1K #1L #1M
                 #1N #1O #1P #1Q #1R #1S #1T #1U #1V #1W #1X #1Y #1Z}
\def\doletters#1{#1a #1b #1c #1d #1e #1f #1g #1h #1i #1j #1k #1l #1m
                 #1n #1o #1p #1q #1r #1s #1t #1u #1v #1w #1x #1y #1z}
\def\Sh{{\rm Sh}}
\newcommand{\Z}{\mathbb Z}
\newcommand{\Q}{\mathbb Q}
\newcommand{\R}{\mathbb R}
\newcommand{\C}{\mathbb C}
\newcommand{\D}{\mathbb D}    % pro algebraic torus
\newcommand{\A}{\mathbb A}    % for adele
\newcommand{\F}{\mathbb F}
\nc{\embed}{\hookrightarrow}
\newcommand{\dieu}{Dieudonn\'{e} }
\nc{\ol}{\overline}
\nc{\wt}{\widetilde}
\nc{\opp}{\mathrm{opp}}
\begin{document}
\renewcommand{\thefootnote}{\fnsymbol{footnote}}
\setcounter{footnote}{-1}
\numberwithin{equation}{section}
%\gammamberwithin{section}{chapter}

%\usepackage[notref,notcite]{showkeys}

\title[Hecke eigensystems of automorphic forms (mod $p$)]{Hecke eigensystems of automorphic forms (mod $p$) of Hodge type and algebraic modular forms}
 \author{Yasuhiro Terakado}
\address{
(Terakado)Institute of Mathematics, Academia Sinica \\
Astronomy Mathematics Building \\
No.~1, Roosevelt Rd. Sec.~4 \\ 
Taipei, Taiwan, 10617} 
\email{terakado@gate.sinica.edu.tw} 
  
\author{Chia-Fu Yu}
\address{
(Yu)Institute of Mathematics, Academia Sinica \\
Astronomy Mathematics Building \\
No.~1, Roosevelt Rd. Sec.~4 \\ 
Taipei, Taiwan, 10617} 
\email{chiafu@math.sinica.edu.tw}

\address{
(Yu)National Center for Theoretical Sciences
No.~1  Roosevelt Rd. Sec.~4,
National Taiwan University
Taipei, Taiwan, 10617}

%\date{June 23, 2000}

\date{\today}
\subjclass[2010]{} 
\keywords{}

%\subjclass[2000]{11R52,11R58}
\keywords{Shimura varieties, 
automorphic forms$\pmod p$, 
Hecke eigensystems, algebraic modular forms}  

\begin{abstract}
We show that the 
systems of prime-to-$p$ Hecke eigenvalues arising from automorphic forms$\pmod p$   for a good prime $p$ associated to an algebraic group $G/\mathbb Q$ of Hodge type are 
the same as those arising from 
algebraic modular forms$\pmod p$  associated to an inner form of $G$. 
\end{abstract} 

\maketitle 
\tableofcontents
\section*{Introduction}
\label{sec:I}
In his letter \cite{Serre} to Tate, Serre proved  that the 
 systems of prime-to-$p$ Hecke eigenvalues arising from modular forms$\pmod p$  are the same as those arising from locally constant functions 
\[f : D^{\times} \backslash (D\otimes_{\mathbb Q}\mathbb A_f)^{\times} \to \overline{\F}_p,\]
 where $D$ is the quaternion $\mathbb Q$-algebra ramified precisely at a prime $p$ and $\infty$, and $\mathbb A_f$ is the ring of finite ad\`eles of the rational numbers $\Q$. 
 This correspondence was obtained from restricting modular forms to the supersingular locus of the modular curve modulo $p$, and 
  the algebra $D$ appeared 
   as the endomorphism algebra of a supersingular elliptic curve. 
    This result can also be regarded  as a mod $p$ analogue of the Jacquet-Langlands  correspondence. 
 
  Gross  was inspired by this work and introduced  the notion of algebraic modular forms$\pmod p$ on reductive algebraic groups over $\mathbb Q$ satisfying certain conditions \cite{Gross}. 
  By using this notion, 
Ghitza  generalized  Serre's result to Siegel modular forms$\pmod p$ of genus $g>1$ in \cite{Ghitza}.
 He considered algebraic modular forms on an inner form of $\GSp_{2g}$ over $\Q$ which  appears as the automorphism group of a superspecial abelian variety. 
Here, an abelian variety is  called superspecial  if it is  isomorphic to a product of supersingular elliptic curves. 
 Further, Reduzzi investigated the above correspondence for Shimura varieties of PEL-type  in \cite{Reduzzi}, and    
  gave full details for the unitary  Shimura variety associated an imaginary quadratic field.
 He considered algebraic modular forms on the inner form  associated to a  superspecial point,  assuming that the superspecial locus of the Shimura variety (i.e.  the set of $\overline{\F}_p$-points whose underlying abelian varieties are superspecial) 
 is non-empty.  
 In \cite{GK}, Goldring and Koskivirta generalized the first step of Serre's work to arbitrary Shimura varieties of  Hodge type. 
 More precisely, they showed that the systems of Hecke eigenvalues which appear on the whole Shimura variety are the same as those which appear on the unique zero-dimensional Ekedahl-Oort (EO) stratum. 
 We remark that while the superspecial locus  might be empty, 
 the zero-dimensional EO stratum will always be  non-empty.

 In  this paper, we generalize Serre's result to arbitrary Shimura varieties of Hodge type, using the theory of  algebraic modular forms.  
  To state the main result we fix some notations. 
  In this introduction,  assume $p > 2$. 
 Let $(G,X)$ be a Shimura datum of Hodge type. 
 Assume $G$ is unramified at $p$ and 
  ${\sf K}={\sf K}_p{\sf K}^p\subset G(\A_f)$ is an open compact subgroup with a  hyperspecial level ${\sf K}_p$ at $p$.
  We write  $\mathscr S_{\sf K}=\mathscr S_{\sf K} (G,X)$ for  the integral model of the  Shimura variety given by Kisin in  \cite{Kisin1}. 
Let  $L$ be the centralizer   in $G \otimes_{\Z_{(p)}} \overline{\F}_p$ of  a cocharacter induced by the Shimura datum $(G,X)$.  
We choose a maximal torus $T$ of $L$ and     a positive system of roots of $L$, and 
  we  write $X_{+,L}^*(T)$  for 
   the set of dominant weights.          
  For any $\xi \in X_{+,L}^*(T)$, let   $\mathscr V(\xi)$ be the \emph{automorphic bundle of weight $\xi$} on the special fiber  $\mathscr S_{\sf K} \otimes _{\Z_{(p)}} \overline{\F}_p$ ($\S$\ref{autbdl}). 
  For a smooth toroidal compactification $\mathscr S_{{\sf K}}^{\Sigma}$ of $\mathscr S_{{\sf K}}$, let $\mathscr V^{{\rm can}}(\xi)$  
  (resp. $\mathscr V^{{\rm sub}}(\xi)$) be the canonical (resp. subcanonical) extension  of $\mathscr V(\xi)$ ($\S$\ref{ext}). 
  
  Now  we choose a base point $x \in \mathscr S_{\sf K}(\overline{\F}_p)$. 
   We write 
 $\mathcal A_x$ for the fiber of the universal abelian scheme $\mathcal A \to \mathscr S_{\sf K}$ at $x$.  
  Let $\D(\mathcal  A_x)$ be the contravariant Dieudonn\'{e} module associated to the $p$-divisible group of $\mathcal A_x$.  
Let $I$ be the algebraic group over $\Q$ whose points consist of  automorphisms of $\mathcal A_x$ fixing the tensors ($\S$\ref{sunif}). 
 We put  $I(\Z_p):=I(\Q_p)\cap \Aut_{W(\overline{\F}_p)}(\D(\mathcal A_x))$.  
  Then the reduction$\pmod p$ of an element in $I(\Z_p)$ induces an automorphism of $\D(\mathcal A_x) \otimes \overline{\F}_p$ preserving the Hodge filtration, and hence it also induces an automorphism of its graded module. 
Let $U_p$ be the subgroup of $I(\Z_p)$ consisting of elements whose induced automorphisms  of the graded module  are trivial ($\S$\ref{unifbdl}). 
 Let $I(p):=I(\Z_p)/U_p$. 
Further we  identify the group $I \otimes_{\Q} \Q_{\ell}$ as a subgroup of $G_{\Q_{\ell}}$ for $\ell \neq p$, and put $U^p:=I(\A_f^p)\cap {\sf K}^p$. 
  
  For a finite dimensional simple left  $\overline{\F}_p[I(p)]$-module  $V_{\tau} \in {\rm Irr}(I(p))$, 
we define the \emph{space of  algebraic modular forms  on $I$ of  level $U=U_pU^p$ and weight $V_{\tau}$} as follows:  
\begin{align*}
M^{\rm{alg}}(I, V_{\tau}, U) := 
\{ f : I(\mathbb Q) & \backslash I(\mathbb A_f) /U\to V_{\tau} \ \vert  
\ f(x g)=g^{-1}f(x) 
\\
& {\rm for \ all} 
  \ x \in I(\mathbb Q) \backslash I(\mathbb A_f) /U 
  \ {\rm and} 
   \ g\in I(p)\}. 
  \end{align*} 
We consider the group $I$ associated to a point  on the \emph{basic Newton stratum} 
 ($\S$\ref{Newton}).
\begin{thm}\label{intro}
Assume that $x$ is lying on the basic Newton stratum of $\mathscr S_{\sf K} \otimes \overline{\F}_p$. 
If $(G,X)$ is neither of compact-type, nor of PEL-type, 
then we further assume the  condition  $(\star)$ on the boundary of $\mathscr S_{{\sf K}}^{\Sigma}$, stated in Theorem \ref{GK} below. 
Then 
the systems of prime-to-$p$ Hecke eigenvalues appearing in each of the following spaces 
 \begin{align}\label{mf} 
 \bigoplus_{{\xi}\in X_{+,L}^*(T)} H^0(\mathscr S_{\sf K}\otimes_{\Z_{(p)}} \overline{\F}_p, \mathscr {V}(\xi)), 
 \bigoplus_{{\xi}\in X_{+,L}^*(T)} H^0(\mathscr S^{\Sigma}_{\sf K}\otimes_{\Z_{(p)}} \overline{\F}_p, \mathscr {V}^{\bullet}(\xi)) 
 \end{align}
 for $\bullet \in \{{\rm can, 
 sub}\}$ 
 are the same as those appearing in the space 
 \begin{align}\label{algmf}
  \bigoplus_{V_{\tau} \in {\rm Irr}(I(p))}M^{\rm{alg}}(I, V_{\tau}, U) .\end{align}
 
  As a consequence, the set of  those systems of Hecke eigenvalues appearing  the above space (\ref{algmf})  
 is independent of the choice of $x$ lying on the basic Newton stratum. 
\end{thm}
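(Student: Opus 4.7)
My plan is to combine the reduction theorem of Goldring--Koskivirta with an adelic uniformization of the zero-dimensional Ekedahl--Oort (EO) stratum inside the basic locus. First, I would invoke Theorem \ref{GK}: the systems of prime-to-$p$ Hecke eigenvalues appearing in each of the global spaces displayed in (\ref{mf}) coincide with those appearing in the direct sum $\bigoplus_{\xi} H^0(\mathscr S_{\sf K}^0,\mathscr V(\xi)|_{\mathscr S_{\sf K}^0})$, where $\mathscr S_{\sf K}^0 \subset \mathscr S_{\sf K}\otimes\overline{\F}_p$ denotes the (non-empty, zero-dimensional, finite) minimal EO stratum. The boundary condition $(\star)$ enters here only to treat the canonical and subcanonical extensions in the non-compact, non-PEL setting; it is built into Theorem \ref{GK}, which I would use as a black box.

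Second, I would establish an adelic uniformization of $\mathscr S_{\sf K}^0$. Since the minimal EO stratum is contained in the basic Newton stratum, every $x \in \mathscr S_{\sf K}^0(\overline{\F}_p)$ is basic, and the inner form $I$ of \S\ref{sunif} is well-defined. Using the Rapoport--Zink-type uniformization of the basic locus of $\mathscr S_{\sf K}$ (classical in the PEL case, available in the Hodge case by the work of Kisin and collaborators), together with the identification of the superspecial locus of the attached Rapoport--Zink space with $I(\Q_p)/I(\Z_p)$, one obtains a prime-to-$p$ Hecke-equivariant bijection
\[
 \mathscr S_{\sf K}^0(\overline{\F}_p) \;\cong\; I(\Q)\backslash I(\A_f)/I(\Z_p){\sf K}^p.
\]
Two different choices of basic base point give the same $I$ up to isomorphism, because their Dieudonn\'e modules with tensors lie in a single quasi-isogeny class, so this uniformization immediately yields the independence statement once the main correspondence is proved.

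Third, I would match fibers of automorphic bundles with $I(p)$-representations. At any $x\in \mathscr S_{\sf K}^0(\overline{\F}_p)$ the fiber $\mathscr V(\xi)_x$ is, by the construction recalled in \S\ref{autbdl}, an $\overline{\F}_p$-representation of the stabilizer of the Hodge filtration on $\mathbb D(\mathcal A_x)\otimes \overline{\F}_p$, and by the definition of $U_p$ this stabilizer is exactly the quotient $I(p)=I(\Z_p)/U_p$. Pulling sections back along the finite \'etale cover $I(\Q)\backslash I(\A_f)/U \twoheadrightarrow I(\Q)\backslash I(\A_f)/I(\Z_p){\sf K}^p$ (with $U = U_p {\sf K}^p$) and decomposing each $\mathscr V(\xi)_x$ into its $I(p)$-irreducible constituents identifies the set of prime-to-$p$ Hecke eigensystems appearing in $\bigoplus_\xi H^0(\mathscr S_{\sf K}^0,\mathscr V(\xi)|_{\mathscr S_{\sf K}^0})$ with that appearing in $\bigoplus_{V_\tau\in \Irr(I(p))} M^{\rm alg}(I, V_\tau, U)$. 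Both inclusions hold once one knows that every irreducible $V_\tau\in \Irr(I(p))$ occurs as a constituent of some $\mathscr V(\xi)_x$; this last point follows from the highest-weight theory of modular representations of $L\otimes \overline{\F}_p$.

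The main obstacle is the Hecke-equivariant adelic uniformization of the basic locus at integral level in the Hodge-type setting: one needs a statement strong enough to yield the description of $\mathscr S_{\sf K}^0$ above and to transport the prime-to-$p$ Hecke action to right translation by ${\sf K}^p$ on $I(\A_f^p)$. The PEL case is classical and underlies Reduzzi's treatment of the unitary case, but the Hodge-type generalization requires the recent theory of Hodge-type Rapoport--Zink spaces and a careful compatibility of tensors under quasi-isogenies. The other steps---the EO reduction (black-boxed via Theorem \ref{GK}) and the fiberwise representation-theoretic matching---are more formal, though the latter still requires care in matching Hecke normalizations on both sides.
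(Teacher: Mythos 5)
Your overall architecture (Goldring--Koskivirta reduction, adelic uniformization, fiberwise matching with $I(p)$-representations) follows the paper's outline, but you have substituted the wrong finite subscheme in the crucial middle step, and this creates a genuine gap. The paper's Theorem~\ref{GK} is \emph{not} the original Goldring--Koskivirta statement about the minimal Ekedahl--Oort stratum; it is a modification proved in the paper itself, asserting that the Hecke eigensystems in (\ref{mf}) agree with those in $\bigoplus_{\xi}H^0(\mathcal C(x),\mathscr V(\xi))$, where $\mathcal C(x)$ is the \emph{central leaf} through the given basic point $x$. You instead invoke the original result, reducing to the minimal EO stratum $\mathscr S^0_{\sf K}$. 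These two finite subschemes are in general different: central leaves refine EO strata, and what the Rapoport--Zink/Kisin uniformization identifies with $I(\Q)\backslash I(\A_f)/I(\Z_p){\sf K}^p$ is $\mathcal Z(x)=\mathcal C(x)$ (Corollary~\ref{c=z}), not all of $\mathscr S^0_{\sf K}$. So the bijection you write down for $\mathscr S^0_{\sf K}(\overline{\F}_p)$ is not what the uniformization produces, and your step~2 does not hold as stated.

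There is a second gap in your treatment of the independence from $x$. You take $x$ in the minimal EO stratum and then assert that the case of an arbitrary basic $x$ follows because ``two different choices of basic base point give the same $I$ up to isomorphism.'' But the space $M^{\rm alg}(I,V_\tau,U)$ depends on more than the $\Q$-group $I$: it depends on $I(\Z_p)=I(\Q_p)\cap\Aut_W(\mathbb D(\mathcal A_x))$, on $U_p$ (the kernel of the action on the graded Hodge filtration), and hence on $I(p)=I(\Z_p)/U_p$, all of which are built out of the Dieudonn\'e module \emph{with its lattice and filtration}, not merely its isogeny class. Two basic points not in the same central leaf have non-isomorphic Dieudonn\'e modules with tensors, so there is no automatic identification of these compact open subgroups. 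In the paper the independence is a \emph{corollary} of the main equivalence applied to each $x$ separately; it is not an a priori input one can appeal to when reducing to a particular $x$. To repair your proof you would either need to prove the stronger reduction to $\mathcal C(x)$ for arbitrary basic $x$ (which is exactly the content of the paper's Theorem~\ref{GK}, verified by checking that central leaves satisfy the three conditions under which the Goldring--Koskivirta argument runs), or give a direct argument that the spaces of algebraic modular forms attached to different basic base points have the same Hecke eigensystems --- which is precisely what the main theorem is supposed to establish.
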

 This result can be regarded as a  generalization of the results   of Serre, Ghitza, and  Reduzzi   to Hodge type Shimura varieties.  
 Further, it  generalizes  the condition on  the base point $x$:  
It considers the basic Newton stratum, which has positive dimension in general, whereas previous results considered at most zero-dimensional subschemes (the superspecial locus or the zero-dimensional EO stratum). 
   In particular,  it is new even in  the Siegel  case. 

The proof consists of four parts. 
First we use the Rapoport-Zink uniformization for Hodge type Shimura varieties 
given by Kisin \cite{Kisin2}, Howard-Pappas \cite{HP}, and Xiao-Zhu \cite{XZ}. 
 Let $x$ be a point in the basic Newton stratum and let  $\mathcal C(x)$ for the \emph{central leaf} passing through $x$ ($\S$\ref{Newton}). 
 Then the uniformization map induces  a $G(\A_f)$-equivariant  bijection  
\[I(\Q) \backslash I(\A_f)/I(\Z_p)U^p\xrightarrow{\sim} \mathcal C(x).\]
In particular, one sees that $\mathcal C(x)$ is finite (Corollary \ref{c=z}).  

Next we construct  an embedding (Proposition \ref{propemb}) 
  \[
   I(\mathbb Q) \backslash I(\mathbb A_f) / U
 \hookrightarrow
 \mathcal L_{\mathcal C(x)}\]
which lifts the above bijection.   Here, 
   $\mathcal L_{\mathcal C(x)}$ is the scheme associated to the restriction $\mathcal L \vert _{\mathcal C(x)}$ of  a  principal $L$-bundle  $\mathcal L$ over 
   $\mathscr S_{{\sf K}}$. 
 By this embedding, we can  interpret the space  of automorphic forms  $H^0(\mathcal C(x), \mathscr V(\xi)\vert_{\mathcal C(x)})$ on the central leaf as the space of some vector valued functions  on the scheme $\mathcal L_{\mathcal C(x)}$ (Lemma \ref{fun=aut}). 
 
 Furthermore, these vector valued functions on $\mathcal L_{\mathcal C(x)}$ can be regarded as algebraic modular forms (Proposition \ref{alg}). 
 These constructions are   compatible with prime-to-$p$ level structures ${\sf K}^p$, so   
  we see that the systems of prime-to-$p$ Hecke eigenvalues appearing in the space
 \begin{align}\label{clmf}
 \bigoplus_{\xi \in X_{+,L}^{*}(T)}H^0(\mathcal C(x), \mathscr V(\xi)\vert_{\mathcal C(x)})
 \end{align} 
 are the same as those  appearing in the space of  algebraic modular forms (\ref{algmf}) (Theorem \ref{Hevalg}).  

Finally, we will see that the systems of Hecke eigenvalues appearing in the  above space (\ref{clmf}) 
are the same as those appearing in the space (\ref{mf}) of automorphic forms   on $\mathscr S_{{\sf K}}\otimes \overline{\F}_p$  (Theorem \ref{GK}). 
 This will be deduced by some  modifications of the results of   Goldring and Koskivirta in \cite{GK}.
  Here, we need  the assumption on the boundary.  
 This is an outline of the proof of   Theorem \ref{intro}.   

As a corollary, one can bound the number of those systems of eigenvalues  in terms of the cardinality of the central leaf $\mathcal C(x)$ and the dimensions of simple $\overline{\F}_p[I(p)]$-modules $V_{\tau}$ (Corollary \ref{number}).
 We apply this corollary to the Shimura variety  associated to the similitude group of a  Hermitian form with a totally indefinite quaternionic multiplication, and give an explicit upper bound for the number of its systems of Hecke eigenvalues (Theorem \ref{intro2}).

\begin{thank}
Part of this work was done during our  visits in Wuhan
University, the Morningside Center of Mathematics, CAS and POSTECH. 
We thank Jiangwei Xue and Xu Shen for their invitations and the
institutions for hospitality and good research conditions.  
This paper relies on results of Professor Mark Kisin on 
Shimura varieties of Hodge
type and the construction of automorphic forms by Professor Kai-Wen
Lan. 
We also thank Professor Benedict Gross
for helpful comments and sharing his preprint \cite{Gross:2019} to
us. 
Further we thank the referee for a  thorough reading of the manuscript  and valuable suggestions. 
 The first author thanks Chao Zhang for answering questions on
stratifications of Hodge type Shimura varieties. 
The second author is
partially supported by the MoST grants 107-2115-M-001-001-MY2 and  109-2115-M-001-002-MY3.
\end{thank} 

\section{Integral models of Shimura varieties of Hodge type}
 From now on, we write $k$ for an  algebraic closure $\overline{\F}_p$ of the finite field $\F_p$. 
 We assume that   $p \neq 2$. 
  Let $W=W(k)$ be the ring of Witt vectors of $k$, 
  and let $K:=W[\frac{1}{p}]$ be the field of fractions of $W$. 
  
  In this section, 
we recall results of Kisin \cite{Kisin1}
 about integral models of Shimura varieties of Hodge type. 
 \subsection{Shimura varieties of Hodge type}
  Let $G$ be a connected reductive group over $\mathbb Q$ and $X=\{h\}$ a $G(\R)$-conjugacy class of maps of algebraic groups over $\mathbb R$ 
\[h : \Res_{\mathbb C/\R}\mathbb G_{m \C} \to G_{\R},\]
such that $(G, X)$ is a Shimura datum \cite[\S 2.1]{Deligne}.
Denote $\mu_h :
 \mathbb G_{m\C} \to G_{\C}$ by $\mu_h(z)=h_{\C}(z,1)$. 
The reflex field $E \subset \overline{\Q} \subset \C$ is the  field of definition of the conjugacy class of $\mu_h$. 

Let $\A_f$ denote the finite adeles over $\Q$, and $\A_f^p \subset \A_f$ the subgroup of adeles with trivial component at a prime $p$. 
Let ${\sf K}={\sf K}_p{\sf K}^p \subset G(\A_f)$ where 
${\sf K}_p \subset G(\Q_p)$, 
and ${\sf K}^p \subset G(\A_f^p)$ are compact open subgroups. 
In the following, we always  assume that ${\sf K}^p$ is sufficiently small. 
Then, by a theorem of Baily-Borel,  
\[ \Sh_{\sf K}(G,X)_{\C} := G(\Q) \backslash X \times G(\A_f)/{\sf K} \]
has a natural structure of an algebraic variety over $\C$. 
 We write  
$\Sh_{\sf K}(G,X)$ for Deligne's canonical model 
over 
$E$ \cite[\S 2.2]{Deligne}. 

We fix a finite dimensional $\Q$-vector space $V$ equipped with a perfect alternating pairing $\psi$. 
Let $\GSp=\GSp(V,\psi)$ denote the corresponding group of symplectic similitudes, and let $S^{\pm}$ be the Siegel upper and lower half spaces, 
defined as the set of maps $h: \Res_{\C/\R}\mathbb G_{m \C} \to \GSp_{\R}$ 
such that 
\begin{itemize}
\item[(1)] The $\C^{\times}$ action on $V_{\R}$ gives rise to a Hodge structure of type $(-1, 0), (0,-1) : $
\[ V_{\C} \cong V^{-1,0} \oplus V^{0, -1}.\]
\item[(2)] The pairing $V_{\R} \times V_{\R} \to \R$ defined by $(x,y) \mapsto \psi (x, h(i)y)$ is (positive or negative) definite. 
\end{itemize}

We call  $(G,X)$ a Shimura datum of \emph{Hodge type} if there is an embedding of algebraic groups
\begin{align}\label{iota}\varphi : G \hookrightarrow \GSp
\end{align}
over $\Q$ inducing a morphism of Shimura data $(G, X) \to (\GSp, S^{\pm})$.   
Henceforth, we assume that $(G,X)$ is of Hodge type.

For the rest of this paper, we assume that $G$ is unramified at $p$. 
Then $G$ extends to a connected reductive group over $\Z_{(p)}$, which we again  denote by $G$.  
By \cite[2.3.1]{Kisin1}, the  embedding $\varphi$ is induced by an embedding $G \to  \GL(\Lambda_{\Z_{(p)}})$ for some $\Z_{(p)}$-lattice $\Lambda_{\Z_{(p)}} \subset V$. 
By Zarhin's trick, after replacing $\Lambda_{\Z_{(p)}}$ by $\Hom_{\Z_{(p)}}(\Lambda_{\Z_{(p)}}, \Lambda_{\Z_{(p)}})^{\oplus4}$, we may assume that  $\Lambda_{\Z_{(p)}}$ is self-dual with respect to $\psi$. 
We then have a closed immersion  of reductive group schemes
\[\varphi : G  \hookrightarrow \GSp(\Lambda_{\Z_{(p)}}, \psi\vert_{\Lambda_{\Z_{(p)}}})\] 
over $\Z_{(p)}$ with generic fiber (\ref{iota}). 
We will write $\Lambda_{\Z_{(p)}}^{*}$ for  the dual of $\Lambda_{\Z_{(p)}}$. 
Further, for each $\Z_{(p)}$-algebra $R$, we write $G_{R}$ (resp. $\Lambda_{R}$, $\Lambda^*_R$) for $G \otimes_{\Z_{(p)}} R$ (resp. $\Lambda_{\Z_{(p)}} \otimes _{\Z_{(p)}} R, \Lambda^{*}_{\Z_{(p)}} \otimes_{\Z_{(p)}} R$). 

Let  ${\sf K}_p$ and ${\sf K}'_p$ be  the  subgroups  
 \begin{align*}  
  {\sf K}_p  
 & :=G(\Z_p)
  \subset G(\Q_p), 
  \\ 
  {\sf K}'_p 
   & :=
  \GSp 
  (
  \Lambda_{\Z_{(p)}}, 
  \psi  \vert _{\Lambda_{\Z_{(p)}}}
  )(\Z_p)
  \subset \GSp(\Q_p).
  \end{align*} 
By \cite[2.1.2]{Kisin1}, for each ${\sf K}^p$ there exists a compact open subgroup 
${\sf K}'^p \subset \GSp(\A_f^p)$ containing ${\sf K}^p$ and such that $\varphi$ induces an embedding 
\[\varphi : \Sh_{\sf K}(G, X) \hookrightarrow  \Sh_{{\sf K}'}(\GSp, S^{\pm})\]
of $E$-schemes, where 
${\sf K}':={\sf K}'_p {\sf K}'^p \subset \GSp(\A_f)$ and $\Sh_{{\sf K}'}(\GSp,S^{\pm})$ is the Siegel Shimura variety.  
\subsection{Integral models}\label{intmdl}
Let $S$ be a $\Z_{(p)}$-scheme and $ A \to S$ be an abelian scheme. 
The prime-to-$p$ Tate module is defined by 
\[\widehat{T}^p(A)=\lim _{p\nmid n} A[n],\]
viewed as an \'etale local system on $S$. 
Write $\widehat{V}^p( A)=\widehat{T}^p(A) \otimes_{\Z}\Q.$ 
 
 The category of \emph{abelian  schemes over $S$ up to prime-to-$p$ isogeny} is defined as follows: 
 The objects $A$ is an abelian scheme over $S$. 
 A morphism $f: A_1 \to  A_2$ is an element of the module $\Hom_S( A_1,  A_2)\otimes_{\mathbb Z}\mathbb Z_{(p)}$. 
  An isomorphism in this category will be called a prime-to-$p$ quasi-isogeny. 
 If $A$ is an abelian scheme up to prime-to-$p$ isogeny,  we write  ${A^t}$ for the dual abelian scheme up to prime-to-$p$ isogeny. 
 
 A $\Z_{(p)}^{\times}$-\emph{polarization} of $A$ is a  prime-to-$p$ quasi-isogeny  $\lambda :  A \to { A}^t$ such that $n\lambda$  for some positive integer $n$   is induced by an ample line bundle on $A$.  

Let ${\sf K}'^{p} \subset \GSp(\A_f^p)$ be any compact open subgroup. 
Assume that $(A, \lambda)$ is an abelian scheme with a $\Z_{(p)}^{\times}$-polarization. 
Denote by $\underline{\Isom}(\Lambda_{\A_f^p}, \widehat{V}^p(A))/{\sf K}'^p$ 
 the \'etale sheaf on $S$ consisting of ${\sf K}'^p$-orbits of isomorphisms 
 $\Lambda_{\A_f^p} \xrightarrow{\sim} \widehat{V}^p( A)$
  which are compatible with the parings induced by 
  $\psi$ and 
  $\lambda$, up to a scalar in   
  $(\A_f^{p})^{\times}$. 
A \emph{${\sf K}'^{p}$-level structure} on $(A, \lambda)$ is a global section
\[\eta_{{\sf K}'} \in \Gamma(T, \underline{\Isom}(\Lambda_{ \A_f^p}, \widehat{V}^p(A))/{\sf K}'^{p}).\]

In the following, we assume that ${\sf K}'^p$ is sufficiently small. 
By \cite[\S 5]{Kottwitz}, 
 the functor which assigns to $S$ the set of isomorphism classes of triples $( A, \lambda, \eta)$ as above, is representable by a smooth $\Z_{(p)}$-scheme $\mathscr S_{{\sf K}'}(\GSp, S^{\pm})$, such that one has a natural identification 
\[\mathscr S_{{\sf K}'}(\GSp, S^{\pm}) \otimes_{\Z_{(p)}} 
\Q \xrightarrow{\sim}
\Sh_{{\sf K}'}(\GSp, S^{\pm}).\]

 Let $O$ be the ring of integers of $E$. 
 We write  $O_{(\mathfrak p)}$ for the localization of $O$ at the prime $\mathfrak p$ corresponding to an fixed embedding $\overline{\Q} \to \overline{\Q}_p$, and write  $E_{\mathfrak p}$ for the completion of $E$ at $\mathfrak p$. 
 We write $\mathscr{S}_{\sf K}(G, X)^{-}$ for the Zariski closure of $\Sh_{\sf K}(G, X)$ in $\mathscr S_{{\sf K}'}(\GSp, S^{\pm}) \otimes _{\Z_{(p)}}  O_{(\mathfrak p)}$. 
  Then the canonical smooth model 
  $\mathscr S_{{\sf K}}=\mathscr S_{\sf K}(G,X)$ over $O_{(\mathfrak p)}$ is constructed as the normalization of $\mathscr S_{\sf K}(G, X)^{-}$. 
 
 For any compact open ${\sf K}^p_1 \subset {\sf K}^p$, we set   
${\sf K}_1:={\sf K}_p{\sf K}_1^p$. 
Then there is a finite \'{e}tale projection $\pi_{{\sf K}_1/{\sf K}} : \mathscr S_{{\sf K}_1} \to \mathscr S_{\sf K}$.
Further, for $h \in G(\A_f^p)$ 
 there is a right action 
 $h : \mathscr S_{\sf K} \to \mathscr S_{h^{-1}{\sf K}h}$.

Denote by $\Lambda_{\Z_{(p)}}^{\otimes}$ the direct sum of all the $\Z_{(p)}$-modules which can be formed from $\Lambda_{\Z_{(p)}}$ using the operations of taking duals, tensor products, symmetric powers and exterior powers. 
It satisfies that $\Lambda_{\Z_{(p)}}^{\otimes} \xrightarrow{\sim} \Lambda_{\Z_{(p)}}^{*\otimes}$ so that a tensor in the left hand side may be regarded in the right hand side. 
 By \cite[1.3.2]{Kisin1}, the subgroup $G
 \subset \GL(\Lambda_{\Z_{(p)}})$ is the scheme theoretic stabilizer of a collection of tensors $(s_{\alpha})\subset \Lambda_{\Z_{(p)}}^{\otimes}$.  
 
 Let $\mathcal A' \to \mathscr S_{{\sf K}'}(\GSp, S^{\pm})$ be the universal  abelian scheme and let  
$\mathcal A \to \mathscr S_{\sf K}$ be its pullback  via $\varphi$.   
Write $H^1_{{\rm dR}}(\mathcal A/\mathscr S_{\sf K})$ for the first relative  de Rham cohomology. 
By \cite[Cor.~2.3.9]{Kisin1}, 
there are \emph{de Rham tensors} 
\[t_{\alpha, {\rm dR}} \in H^1_{{\rm dR}}(\mathcal A/\mathscr S_{\sf K})^{\otimes},\]
 defined as sections of a coherent $\mathcal O_{\mathscr S_{\sf K}}$-module. 
 
\section{Mod $p$ points of  the  Rapoport-Zink uniformization}
  In this subsection, we review the construction of  $k$-values of the Rapoport-Zink   uniformization given by Kisin in \cite{Kisin2}.   
  This will be used in $\S$\ref{unifbdl} with some modification. 
  
  From now on, we write $S_{\sf K}$ for the special fiber $\mathscr S_{\sf K} \otimes k$ of the integral model $\mathscr S_{\sf K}=\mathscr S_{{\sf K}}(G,X)$. 
\subsection{Basic definitions}\label{tensor} 
For any $b \in G(K)$, let $J_b$ be the functor on $\Q_p$-algebras defined by  
 \[J_b(R):=\{ g\in G(R \otimes _{\Q_p}K) \ : \ gb\sigma(g)^{-1}=b\}\]
for any $\Q_p$-algebra $R$. 
Then $J_b$ defines a smooth affine group scheme over $\Q_p$ by \cite[Prop. 1.12]{RZ}. 
Up to isomorphism, $J_b$ depends only on the $\sigma$-conjugacy class of $b$. 

 An element  $b \in G(K)$  is called  \emph{basic} if its slope cocharacter  
  defined by Kottwitz in \cite[$\S$4]{Kottwitz85}   
   factors through the center of $G_K$.  
   Again, this property depends only on the $\sigma$-conjugacy class of $b$.  
   An element $b$ is basic if and only if the $\Q_p$-group $J_b$ is an inner form of $G$. 

Now let $x\in S_{\sf K}(k)$. 
We write $\mathcal A_x$ for the fiber of $\mathcal A$ at $x$. 
As in \cite[3.4.2]{Kisin1}, the point $x$  gives rise to a triple $(\mathcal A_x, \lambda_x, \eta_{{\sf K}',x})$.    
Passing to the limit over ${\sf K}'^{p}$, $\eta_{{\sf K}'}$ may be promoted to a ${\sf K}^p$-orbit of an isomorphism $\Lambda_{\A_f^p} \xrightarrow{\sim} \widehat{V}^p(\mathcal A_x)$ as in \cite[3.2.4]{Kisin1}. 
We write $\eta_{{\sf K},x}$ for this orbit.  
   Furthermore, there are \emph{\'{e}tale tensors} 
 \[t_{\alpha,x}^p \in 
 \widehat{V}^p(\mathcal A_x)^{\otimes}, \]
and $\eta_{{\sf K},x}$ takes $(s_{\alpha})$ to $(t^p_{\alpha, x})$.  

Let $\mathcal A_x[p^{\infty}]$ be the $p$-divisible group of $\mathcal A_x$.  
We will write $\D(\mathcal A_x)$ for the contravariant \dieu module of  $\mathcal A_x[p^{\infty}]$.  
%Further let $\tilde{x}
%\in \mathscr S_{{\sf K}}(W)$ be a lifting of $x$. 
We have the Frobenius morphism 
\[{\sf F} : \sigma^{*}\D(\mathcal A_x) \to \D(\mathcal A_x)\]
where $\sigma$ denotes the absolute Frobenius on $W$. 
%The Hodge filtration on $\D(\mathcal A_x)$ induces a natural filtration 
%$\Fil^{\bullet}(\D(\mathcal A_x)^{\otimes}))$ on $\D(\mathcal A_x)^{\otimes}$ 
This $\sf F$ induces an isomorphism of isocrystals 
\[ {\sf F} : \sigma^{*}\D(\mathcal A_x)^{\otimes}[1/p] \xrightarrow{\sim}
\D(\mathcal A_x)^{\otimes}[1/p].\]
By \cite[Cor. 1.4.3]{Kisin1}, there are \emph
{crystalline tensors} 
\[t_{\alpha, x}\in \D(\mathcal A_x)^{\otimes}\]
that are fixed by the action of $\sf F$.

We write  $\D(\mathcal A_x)(k)$ for the reduction $\D(\mathcal A_x)\otimes_W k$. 
%By the construction of the de Rham tensors in the proof of \cite[Cor. 2.3.9]{Kisin1}, 
Then the canonical isomorphism 
\begin{align*}
\D(\mathcal A_x)(k) 
\xrightarrow{\sim}
 H^1_{{\rm dR}}(\mathcal A_x/k)
\end{align*}
sends $t_{\alpha, x}\otimes_W k$ to $t_{\alpha, {\rm dR},x}$. 

By \cite[1.4.1]{Kisin2}, there is an 
isomorphism  of $W$-modules 
 \begin{align}\label{isom}
 \Lambda_W^{*} \xrightarrow{\sim} \D(\mathcal A_x)
 \end{align}
 taking $s_{\alpha}$ to $t_{\alpha, x}$. 
 This allows us to identify the group $G_W$ with the subgroup of   
 $\GL(\D(\mathcal A_x))$ defined by $(t_{\alpha, x})$. 
  After choosing such an isomorphism, 
 the Frobenius on $\D(\mathcal A_x)$ has the form ${\sf F}=b\circ \sigma$ for some $b \in G(K)$. 
 The element $b$ is independent of choices, up to $G(W)$-$\sigma$ conjugation.

Consider the Hodge filtration 
\[\Fil^1\D(\mathcal A_x)(k) \subset \D(\mathcal A_x)(k)\cong H^1_{{\rm dR}}(\mathcal A_x/k).\]
By \cite[Cor. 1.4.3 (4)]{Kisin1}, this filtration is given by a $G_{k}$-valued cocharacter. 
By \cite[Lem.~1.1.9]{Kisin1} and the argument in its proof, 
any lift to a cocharacter 
\begin{align}\label{cochar} \mu_x : \mathbb G_{m W} \to G_W \end{align}  
is $G(\overline{K})$-conjugate to $\mu_{h}^{-1}$ 
after we fix an isomorphism $\C\xrightarrow{\sim}\overline{K}$. 
Moreover, the $G(W)$-conjugacy class of $\mu_x$ is independent of the choices of an isomorphism (\ref{isom}). 
By \cite[1.1.12]{Kisin2}, any such cocharacter satisfies 
\[b\in G(W)\mu_x^{\sigma}(p)G(W).\] 
 \subsection{Uniformization map}\label{sunif}
For each $x \in S_{{\sf K}}(k)$, we set 
 \[X_{\mu_x^{\sigma}}(b):=
 \{ g\in G(K)/G(W) : g^{-1}b\sigma (g) \in G(W)\mu_x^{\sigma}(p)G(W)\}
 . \]
If  $g_p \in G(K)$ is a representative of a point in $  X_{\mu_x^{\sigma}}(b)$,  then the submodule $g_p \cdot \D(\mathcal A_x)\subset \D(\mathcal A_x)[1/p]$ is stable under the Frobenius morphism ${\sf F}[1/p]$ and has a structure of Dieudonn\'e module. 
Hence $g_p\cdot \D(\mathcal A_x)$ with  this action  corresponds to a $p$-divisible group $\mathscr G_{g_px}$, and it is naturally equipped with a  
 quasi-isogeny $\mathcal A_x[p^{\infty}] \to \mathscr G_{g_px}$ of $p$-divisible groups corresponding to the natural isomorphism 
 \[
 g_p \cdot \D(\mathcal A_x)[1/p] \xrightarrow{\sim} \D(\mathcal A_x)[1/p]
 \]
  induced by the embedding $g_p \cdot \D(\mathcal A_x)\subset \D(\mathcal A_x)[1/p]$. 
   Further we have    
  \[t_{\alpha, x}=g_p(t_{\alpha,x}) \in 
  (g_p\D(\mathcal A_x))^{\otimes}
  =\D(\mathcal A_x)^{\otimes}.\]
Then there exist an abelian variety $A_{g_px}$ and a quasi-isogeny $\theta : \mathcal A_x \to  A_{g_px}$ corresponding to $\mathcal A_x[p^{\infty}] \to \mathscr G_{g_px}$. 
This abelian variety $A_{g_px}$ has a canonical ${\sf K}'^p$-level structure 
\[ \eta_{g_px}:=\theta_*
\circ 
\eta_{{\sf K}',x}:
  [\Lambda_{\A_f^p} 
 \xrightarrow{\sim} \widehat{V}^p(\mathcal A_{x})
 \xrightarrow{\sim} \widehat{V}^p(A_{g_px})].\]
 Since $G\subset \GSp$, the $\Z_{(p)}^{\times}$-polarization $\lambda_x$ induces a one $\lambda_{g_px}$ on  $A_{g_px}$. 
 Thus we obtain a map  
 \[X_{\mu_x^{\sigma}}(b)
   \to 
   \mathscr S_{{\sf K}'}(\GSp, S^{\pm})(k) :  g \mapsto  y=[(A_{g_p x}, \lambda_{g_p x}, \eta_{g_p x})].\]
Note that if we write  
 ${\mathcal A}'_y$ for the  fiber of   
 $\mathcal A'$ at $y$, there is a canonical prime-to-$p$ quasi-isogeny 
 $({\mathcal A}'_y, \lambda_y, \eta_{y})  
 \to 
 (A_{g_px}, \lambda_{g_px}, \eta_{g_px})
 $.
 
 By \cite[1.4.4]{Kisin2}, 
 there is a unique lifting of this map to a map
 \begin{align}\label{map} \iota_x : X_{\mu_x^{\sigma}}(b) \to S_{\sf K}(k)\end{align}
 such that its crystalline tensors satisfy equalities $t_{\alpha,  x}=t_{\alpha,  \iota_x(g_p)}$ 
 under the identification 
 \[\D(\mathcal  A_{x})^{\otimes}=
 \D(A_{g_px})^{\otimes}\xrightarrow{\sim}\D(\mathcal A'_{y})^{\otimes} = \D(\mathcal A_{\iota_x(g_p)})^{\otimes}.\] 

Now we fix a point 
\[x \in S_{{\sf K}_p}(k)
 :=\lim_{\substack{\longleftarrow 
 \\ {\sf K}^p}}
 S_{{\sf K}}(k).\] 
 Then the map (\ref{map}) extends to 
 a $G(\A_f^p)$-equivariant map
 \begin{align*}
 \iota_x : X_{\mu_x^{\sigma}}(b) \times G(\A_f^p) \to  S_{{\sf K}_p}(k). 
 \end{align*}
 By taking the quotient by each ${\sf K}^p$, one gets  
a map 
\begin{align*}
\iota_{{\sf K},x}: X_{\mu_x^{\sigma}}(b) \times G(\A_f^p) /{\sf K}^p\to S_{{\sf K}}(k).
 \end{align*}
Let $\Aut_{\Q}(\mathcal A_x)$ denote the $\Q$-group whose points in a $\Q$-algebra $R$ are 
\[\Aut _{\Q}(\mathcal A_x)(R)= (\End^0(\mathcal A_x)\otimes R)^{\times},\]
where $\End^0(\mathcal A_x)$ denotes  the endomorphism algebra of $\mathcal A_x$ viewed as an abelian variety up to isogeny. 
 We write $I \subset \Aut_{\Q}(\mathcal A_x)$ for the subgroup whose points consist of the elements fixing the tensors $t_{\alpha, x}$ and $t_{\alpha, \ell, x}$ for all $\ell \neq p$. 
 For each $\ell \neq p$ we have a  morphism $I_{\Q_{\ell}} \to G_{\Q_{\ell}}$, 
 which is canonical up to  conjugation  by  elements in  the image of ${\sf K}^p \to G(\Q_{\ell})$. 
 We also have a map $I_{\Q_p}  \to J_b$ which sends each element  $j$ in $I_{\Q_p}$ to the automorphism  $\D(j^{-1})$ of $\D(\mathcal A_x)[1/p]$. 
\begin{prop}\label{kis}\cite[Prop. 2.1.5]{Kisin2} 
The map $\iota_{{\sf K}, x}$  induces an  injective map
\begin{align*}
\vartheta_{{\sf K},x} :
I(\Q) \backslash X_{\mu_x^{\sigma}}(b) \times G(\A_f^p)/{\sf K}^p \to  S_{\sf K}(k).
 \end{align*}  
\end{prop}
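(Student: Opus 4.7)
The plan is to break the proposition into two pieces: first that $\iota_{{\sf K},x}$ factors through the diagonal $I(\Q)$-action on $X_{\mu_x^\sigma}(b)\times G(\A_f^p)/{\sf K}^p$, and second that the induced map $\vartheta_{{\sf K},x}$ is injective. The $I(\Q)$-action is the one coming from the two realizations discussed just before the statement: $I_{\Q_p}\to J_b$ acts on $X_{\mu_x^\sigma}(b)$ via the identification of the latter with a set of Dieudonn\'e lattices inside $\D(\mathcal A_x)[1/p]$, and $I(\Q)\to G(\A_f^p)$ acts on the prime-to-$p$ factor.

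To show well-definedness, I would fix $j\in I(\Q)$ and $(g_p,g^p)\in X_{\mu_x^\sigma}(b)\times G(\A_f^p)$ and produce a prime-to-$p$ quasi-isogeny
\[(A_{g_p x},\lambda_{g_px},\eta_{g_px}\cdot g^p)\;\longrightarrow\;(A_{j\cdot g_p x},\lambda_{j\cdot g_px},\eta_{j\cdot g_p x}\cdot j^{-1}g^p)\]
that matches crystalline tensors, hence gives the same point in $\mathscr S_{\sf K}'(\GSp,S^\pm)(k)$ and, by the tensor matching, the same point in $S_{\sf K}(k)$. The quasi-isogeny is the one induced by $j$ itself viewed as an element of $\Aut_{\Q}(\mathcal A_x)$; one checks directly from the construction of $A_{g_px}$ as the abelian variety attached to the Dieudonn\'e lattice $g_p\cdot\D(\mathcal A_x)$ that $j$ intertwines the data on the two sides, because $j$ preserves $\lambda_x$, each $t_{\alpha,\ell,x}$, and each $t_{\alpha,x}$ by the definition of $I$.

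For injectivity, suppose $\iota_{{\sf K},x}(g_p,g^p)=\iota_{{\sf K},x}(g'_p,g'^p)$ in $S_{\sf K}(k)$. This gives an isomorphism of the associated triples (including $\Z_{(p)}^\times$-polarizations and ${\sf K}^p$-orbits of level structures) which, by the tensor-matching property in \cite[1.4.4]{Kisin2}, also preserves crystalline and \'etale tensors. Composing with the quasi-isogenies $\theta:\mathcal A_x\to A_{g_px}$ and $\theta':\mathcal A_x\to A_{g'_px}$ of the construction, one obtains a prime-to-$p$ quasi-isogeny $\mathcal A_x\to\mathcal A_x$ whose induced maps on $\widehat V^p(\mathcal A_x)$ and $\D(\mathcal A_x)[1/p]$ fix all of $(t^p_{\alpha,x})$ and $(t_{\alpha,x})$; by definition this is an element $j\in I(\Q)$. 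Tracking through the construction, the $p$-component of $j$ carries $g_p\cdot\D(\mathcal A_x)$ to $g'_p\cdot\D(\mathcal A_x)$, i.e.\ $j\cdot g_p=g'_p$ in $X_{\mu_x^\sigma}(b)$, and the prime-to-$p$ component carries $\eta_{{\sf K},x}\cdot g^p$ to $\eta_{{\sf K},x}\cdot g'^p$, i.e.\ $j\cdot g^p\equiv g'^p\bmod {\sf K}^p$. Hence $(g_p,g^p)$ and $(g'_p,g'^p)$ lie in the same $I(\Q)$-orbit.

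The subtle step will be the descent argument needed to extract a genuine element of $I(\Q)$, as opposed to an element of $\prod_\ell J_b\times G(\A_f^p)$-type data, from the isomorphism of $k$-valued triples: one must use that a prime-to-$p$ quasi-isogeny of abelian varieties over $k$ is determined (and exists) once its actions on the $p$-divisible group and on the prime-to-$p$ Tate module are specified, and that compatibility of the isomorphism of triples with all tensors forces the resulting automorphism to lie in the subgroup $I\subset\Aut_{\Q}(\mathcal A_x)$. This is the content of the tensor-preservation part of Kisin's main local result \cite[Prop.~1.4.4]{Kisin2} and is where the fact that the $s_\alpha$ cut out $G$ scheme-theoretically inside $\GL(\Lambda_{\Z_{(p)}})$ is essential; once this is in hand, both the well-definedness and the injectivity follow by the same bookkeeping.
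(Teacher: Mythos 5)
The paper does not prove this proposition; it cites it directly from Kisin, \cite[Prop.~2.1.5]{Kisin2}, so there is no internal proof to compare against. Your reconstruction is a reasonable approximation of Kisin's argument: the well-definedness/injectivity split, the idea of producing an automorphism $j\in I(\Q)$ by comparing the two isogeny chains out of $\mathcal A_x$, and the appeal to tensor preservation to force $j$ into the subgroup $I$ are all the right moves.

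One slip worth flagging: the composite $\theta'^{-1}\circ(\text{isom})\circ\theta:\mathcal A_x\to\mathcal A_x$ is \emph{not} a prime-to-$p$ quasi-isogeny. The middle isomorphism (coming from the equality of points on the Siegel moduli space, which classifies abelian schemes up to prime-to-$p$ isogeny) is prime-to-$p$, but $\theta$ and $\theta'$ are $p$-power quasi-isogenies induced by the inclusions $g_p\cdot\D(\mathcal A_x),\,g'_p\cdot\D(\mathcal A_x)\subset\D(\mathcal A_x)[1/p]$. So the composite is a general $\Q$-quasi-isogeny. This is harmless for your conclusion, since $I(\Q)\subset(\End^0(\mathcal A_x))^\times$ is a group of general self-quasi-isogenies, not prime-to-$p$ ones; you should simply drop the ``prime-to-$p$'' qualifier. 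You are also a bit cavalier about the direction conventions (the paper sends $j\mapsto\D(j^{-1})$ to embed $I_{\Q_p}\hookrightarrow J_b$, so there is an inversion to track when you conclude $j\cdot g_p=g'_p$ in $X_{\mu_x^\sigma}(b)$), but this is bookkeeping and does not affect the structure of the argument.
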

For an inclusion  ${\sf K}^p_1\subset 
{\sf K}^p$ of open compact subgroups of $G(\A_f^p)$,  we set ${\sf K}_{1}:={\sf K}_p{{\sf K}^p_1}$.
 One has a projection 
\[I(\Q) \backslash X_{\mu_x^{\sigma}}(b) \times G(\A_f^p)/{\sf K}_1^p 
 \to 
 I(\Q) \backslash X_{\mu_x^{\sigma}}(b) \times G(\A_f^p)/{\sf K}^p.\]
 Then the systems of maps 
$\{\vartheta _{{\sf K}^p}\}_{{\sf K}^p}$ is compatible with the projection. 
Further, 
we take an element $h=h^p \in G(\A_f^p)$. 
The map $
g^p \cdot {\sf K}^p \mapsto  
g^ph \cdot h^{-1}{\sf K}^ph$ for $g^p \in G(\A_f^p)$ 
induces a right action  
\begin{align*}
h : 
I(\Q) \backslash X_{\mu_x^{\sigma}}(b) \times G(\A_f^p)/{\sf K}^p 
\to 
I(\Q) \backslash X_{\mu_x^{\sigma}}(b) \times G(\A_f^p)/h^{-1}{\sf K}^ph.
\end{align*} 
Then for each ${\sf K}^p$ 
 one has  a commutative diagram 
  \begin{align*}
  \begin{CD}
 I(\Q) \backslash X_{\mu_x^{\sigma}}(b)\times G(\A_f^p)/{\sf K}^p  @>{h}>>
   I(\Q) \backslash X_{\mu_x^{\sigma}}(b)\times G(\A_f^p)/h^{-1}{\sf K}^ph 
  \\
  @V{\vartheta_{{\sf K}, x}}VV 
  @V{\vartheta_{h^{-1}{\sf K}h,x}}VV 
  \\
    S_{\sf K}(k)
 @>{h}>> 
   S_{h^{-1}{\sf K}h}(k).
   \end{CD}
  \end{align*}
  
  We set  
\[  U^p:=I(\A_f^p) \cap {\sf K}^p, \ 
  I(\Z_p):=
 I(\Q_p)\cap G(W) \subset G(K).\]
 Then the natural maps   
$I(\Q_p) \to J_b(\Q_p) \to  X_{\mu_x^{\sigma}}(b)$  
 induce embeddings
 \begin{align}\label{IQ/IZ}
 I(\Q_p)/I(\Z_p) \embed J_p(\Q_p)/(J_b(\Q_p) \cap G(W))
\hookrightarrow X_{\mu_x^{\sigma}}(b).
\end{align} 
  
 Note that $I(\Q)$ is a subgroup of  the group   
 $\{g \in \End^0(\mathcal A_x)^{\times}   \vert \ g^*g\in  \Q^{\times}\}$
 where  $g \mapsto g^*$ is  the Rosati involution on $\End^0(\mathcal A_x)$ induced by the polarization on $\mathcal A_x$.  
 Since this involution is positive, every arithmetic subgroup of this group is finite. 
 Therefore $I(\Q)$ also has only finite arithmetic subgroups. 
 Hence, by   \cite[Proposition 1.4]{Gross},   
 the group $I(\Q)$ is a discrete subgroup of $I(\A_f)$ and the quotient 
 $I(\Q)\backslash I(\A_f)$ is compact. 
 Thus, the double coset  \[I(\Q) \backslash  I(\A_f)/(I(\Z_p) \cdot  U^p)\]
   is finite. 
 \begin{prop}[{\cite[Prop. 2.1.5]{Kisin2}}]\label{Z_x}
 The embedding  $\vartheta_{{\sf K},x}$ in Proposition \ref{kis} induces an  embedding 
 \begin{align}\label{unif2}I(\Q) \backslash I(\A_f)/(I(\Z_p) \cdot U^p) \hookrightarrow  S_{\sf K}(k).
 \end{align}
 \end{prop}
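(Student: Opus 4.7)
The plan is to assemble the required map by combining two ingredients already in place: the embedding $I(\Q_p)/I(\Z_p) \hookrightarrow X_{\mu_x^\sigma}(b)$ from (\ref{IQ/IZ}) and the natural map $I(\A_f^p) \to G(\A_f^p)$. Composing their product with the uniformization map $\iota_{{\sf K},x}$ and then descending through the left $I(\Q)$-action should deliver the desired map; injectivity will then reduce to an elementary intersection computation.

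First I would define
\[
\Phi \colon I(\A_f) = I(\Q_p) \times I(\A_f^p) \longrightarrow X_{\mu_x^\sigma}(b) \times G(\A_f^p)
\]
by $\Phi(j_p, j^p) = (\rho(j_p)\, G(W),\, j^p)$, where $\rho \colon I(\Q_p) \to J_b(\Q_p) \subset G(K)$ denotes the homomorphism $j \mapsto \D(j^{-1})$ from $\S$\ref{sunif}. By construction $\Phi$ is $I(\Q)$-equivariant for the diagonal left action on the source and the action on the target used in Proposition \ref{kis}. Moreover $\Phi$ descends through the right quotient by $I(\Z_p)\cdot U^p$: the subgroup $I(\Z_p) = I(\Q_p) \cap G(W)$ collapses to the trivial class in $X_{\mu_x^\sigma}(b) = G(K)/G(W)$, while $U^p \subset {\sf K}^p$ by definition. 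Combined with the injection $\vartheta_{{\sf K},x}$ of Proposition \ref{kis}, this produces a well-defined map
\[
I(\Q)\backslash I(\A_f) / (I(\Z_p)\cdot U^p) \longrightarrow S_{\sf K}(k).
\]

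For injectivity, suppose $(j_p, j^p)$ and $(j'_p, j'^p)$ in $I(\A_f)$ have equal images. Since $\vartheta_{{\sf K},x}$ is injective, there exist $\gamma \in I(\Q)$, $h \in G(W)$, and $k \in {\sf K}^p$ such that $\rho(\gamma j_p)\, h = \rho(j'_p)$ in $G(K)$ and $\gamma j^p k = j'^p$ in $G(\A_f^p)$. Then $h = \rho\bigl((\gamma j_p)^{-1} j'_p\bigr)$ lies simultaneously in $\rho(I(\Q_p))$ and $G(W)$; by the very definition $I(\Z_p) = I(\Q_p) \cap G(W)$ this forces $(\gamma j_p)^{-1} j'_p \in I(\Z_p)$. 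Similarly $(\gamma j^p)^{-1} j'^p \in I(\A_f^p) \cap {\sf K}^p = U^p$. Hence the two elements represent the same class in $I(\Q)\backslash I(\A_f) / (I(\Z_p)\cdot U^p)$.

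The main technical point I expect to check carefully is the compatibility between the diagonal $I(\Q)$-action on $I(\A_f)$ and the action on $X_{\mu_x^\sigma}(b) \times G(\A_f^p)$ used in Proposition \ref{kis}, together with the fact that $\rho$ is injective so that $\rho(I(\Q_p)) \cap G(W)$ pulls back faithfully to $I(\Z_p)$. Once these bookkeeping issues are settled, the remaining content is the elementary group-theoretic argument above.
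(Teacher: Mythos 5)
Your proof is correct, and I'll note up front that the paper itself offers no proof of this proposition---it simply cites \cite[Prop.~2.1.5]{Kisin2}---so there is no in-paper argument to compare against. Your construction of $\Phi$ via the product of $\rho$ (composed with the quotient to $X_{\mu_x^\sigma}(b)$) and the map $I(\A_f^p)\to G(\A_f^p)$ is exactly the natural way to extract the claimed embedding from $\vartheta_{{\sf K},x}$, and it matches the two ingredients the paper explicitly lays out just before the proposition (equation~(\ref{IQ/IZ}) and the definition $U^p=I(\A_f^p)\cap{\sf K}^p$). The well-definedness check (that $I(\Z_p)$ collapses into $G(W)$ and $U^p\subset{\sf K}^p$) is right, the $I(\Q)$-equivariance is routine, and the injectivity argument---pull back an equality in the double coset of Proposition~\ref{kis}, then intersect $\rho(I(\Q_p))$ with $G(W)$ and $I(\A_f^p)$ with ${\sf K}^p$---is clean. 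You correctly flag the only point that requires care: $\rho$ must be injective so that $\rho(I(\Q_p))\cap G(W)$ really is $\rho(I(\Z_p))$; the paper tacitly assumes this by writing $I(\Z_p):=I(\Q_p)\cap G(W)\subset G(K)$, i.e.\ by treating $I(\Q_p)$ as a subgroup of $G(K)$, which is justified since the Dieudonn\'e functor is faithful on $p$-divisible groups. In short: this is a complete argument, and in the absence of a written proof in the paper it is the expected one.
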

 This is compatible with the  projection and the right $I(\A_f^p)$-action as above. 
 
 We write $\mathcal Z(x)=\mathcal Z_{{\sf K}}(x)\subset S_{\sf K}(k)$ for the image of this map, and we also write $\mathcal Z(x)$ for the corresponding finite reduced closed subscheme of $S_{\sf K}$. 
 % The right action by $h \in I(\A_f^p)$ induces a bijection 
 % \[h : \mathcal Z_{{\sf K}}(x) \xrightarrow{\sim} 
%  \mathcal Z_{h^{-1}{\sf K}h}(x), \  i_{{\sf K},x}(y) \mapsto h(i_{{\sf K},x}(y))=i_{h^{-1}{\sf K}h,x}(y\cdot h), \]
%  where  $y\in I(\Q)\backslash I(\A_f)/(I(\Z_p) \cdot U^p)$. 
 \subsection{The basic Newton stratum and central leaves}\label{Newton}
%We write $G(K)/^{\sigma}G(K)$ for  the set of $\sigma$-conjugacy classes. 
 For a point $x \in S_{{\sf K}}(k)$, let  $b \in G(K)$ be the element %$\sigma$-conjugate  class
  associated to $x$ defined in $\S$\ref{tensor}. 
 %This assignment induces   
%  \emph{the Newton map}
%\[\mathcal N : S_{{\sf K}}(k) \to G(K) /^{\sigma}G(K), \ x \mapsto [b].\] 
 We write $\mathcal N_b \subset S_{{\sf K}}$
 %=\mathcal N_{[b]}:=\mathcal N^{-1}\{[b]\}$
  for the  
 \emph{Newton stratum} determined by   $b$.  
 By definition, a point $y$ is lying on  $\mathcal N_b(k)$ if and only if  there is a quasi-isogeny $\mathcal A_x[p^{\infty}] \to 
 \mathcal A_y[p^{\infty}]$ of $p$-divisible groups whose corresponding morphism of  \dieu isocrystals $\D(\mathcal A_y)[1/p] \to \D(\mathcal A_x)[1/p]$ maps $t_{\alpha,y}$ to $t_{\alpha,x}$. 
 Note that $x \in \mathcal N_b(k)$.  
 
There is a unique Newton stratum $\mathcal N_b$ such that $b$ is basic.   
 This stratum  $\mathcal N_b$ is called  \emph{the basic Newton stratum} of $S_{{\sf K}}$. 
This is a non-empty closed subscheme of $S_{{\sf K}}$
 (see \cite{Lee} and \cite[$\S$7.2.7]{XZ}). 
\begin{thm}
$(${\cite[Thm. 3.3.2]{HP} {\rm and} \cite[Cor. 7.2.16]{XZ}}$)$\label{basic}
Assume that $x$ is lying on the basic Newton stratum $\mathcal N_b$. 
 Then, the embedding $\vartheta_{{\sf K},x}$ in Proposition \ref{kis} induces a bijection
 \[I(\Q) \backslash X_{\mu_x^{\sigma}}(b) \times G(\A_f^p)/{\sf K}^p \xrightarrow{\sim} \mathcal N_b(k).\]   
Moreover,  the group scheme $I$ is an inner form of $G$, and  there are natural identifications
\[
I_{\Q_{\ell}}=
\begin{cases}J_b & {\rm if} \ \ell=p ;
\\
G_{\Q_{\ell}} &  {\rm  if} \ 
\ell \neq p.
\end{cases} \]
\end{thm}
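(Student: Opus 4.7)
The plan is to establish surjectivity of $\vartheta_{{\sf K},x}$ onto $\mathcal N_b(k)$ (injectivity is the content of Proposition \ref{kis}), and then to identify $I$ with the stated local forms, from which the inner form statement will follow.

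For surjectivity, fix $y \in \mathcal N_b(k)$. By the definition of $\mathcal N_b$ there exists a quasi-isogeny $\phi : \mathcal A_x[p^{\infty}] \to \mathcal A_y[p^{\infty}]$ whose induced isomorphism of isocrystals $\D(\phi)[1/p]$ sends $t_{\alpha, y}$ to $t_{\alpha, x}$. Via the identification (\ref{isom}), this produces an element $g_p \in G(K)/G(W)$; the compatibility of $\D(\phi)$ with the two Frobenii, together with the fact that the Hodge filtration at $y$ is again cut out by a $G$-cocharacter in the conjugacy class of $\mu_h^{-1}$, forces $g_p^{-1} b \sigma(g_p) \in G(W)\mu_x^\sigma(p) G(W)$, so $g_p \in X_{\mu_x^{\sigma}}(b)$. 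Applying the construction of $\S$\ref{sunif} to $g_p$ yields an abelian variety $A_{g_p x}$ together with a quasi-isogeny $\theta : \mathcal A_x \to A_{g_p x}$ whose $p$-component matches $\phi$; the composition $\phi \circ \theta^{-1} : A_{g_p x} \to \mathcal A_y$ then has trivial $p$-part and is a prime-to-$p$ quasi-isogeny. Through the level structures it gives a class $g^p \in G(\A_f^p)/{\sf K}^p$, which lies in $G$ (and not merely in $\GL$) because it preserves the \'etale tensors $(t^p_{\alpha, \bullet})$. By construction $\iota_{{\sf K}, x}(g_p, g^p) = y$, proving surjectivity.

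For the local identifications, injectivity of the canonical maps $I_{\Q_\ell} \to G_{\Q_\ell}$ ($\ell \ne p$) and $I_{\Q_p} \to J_b$ is immediate from the definition of $I$ via tensors. Surjectivity at $\ell \ne p$ is Tate's isogeny theorem applied to $\mathcal A_x$ (endomorphisms of the $\ell$-adic Tate module fixing $(t^p_{\alpha, x})$ come from $\End^0(\mathcal A_x)$), and surjectivity at $p$ uses the basic property together with Dieudonn\'e theory: any element of $J_b(\Q_p)$ preserving the crystalline tensors lifts to an element of $\End^0(\mathcal A_x)^{\times}$ fixing the full collection of tensors. Finally, $J_b$ is an inner form of $G_{\Q_p}$ precisely because $b$ is basic (as recorded in $\S$\ref{tensor}); combined with the isomorphisms $I_{\Q_\ell} \simeq G_{\Q_\ell}$ at all $\ell \ne p$, this exhibits $I$ as locally an inner form of $G$ at every finite place, and the global inner form structure comes out of the fact that $I$ acts on the tensor-fixed part in a way that stabilizes the subgroup $G$ inside $\GL(V)$.

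The main obstacle is the lifting step above: producing a genuine quasi-isogeny $\mathcal A_x \to \mathcal A_y$, compatible with all tensors at every place, from its $p$-divisible shadow $\phi$. This works only on the basic stratum, because there alone does $X_{\mu_x^{\sigma}}(b)$ parametrize all quasi-isogenous $p$-divisible groups with the correct isocrystal structure; on a non-basic stratum this parametrization covers only a single central leaf. The systematic way to perform the lift is via the integral Rapoport-Zink space of Kisin \cite{Kisin2} and Howard-Pappas \cite{HP}, or the Witt vector affine Grassmannian of Xiao-Zhu \cite{XZ}, which are developed precisely for this purpose. A secondary subtlety is the assertion that $g^p$ lies in $G(\A_f^p)$ and not merely in $\GL(\Lambda_{\A_f^p})$; this is a consequence of Kisin's result \cite[1.3.2]{Kisin1} that $(s_\alpha)$ cut out $G$ as a subgroup scheme of $\GL(\Lambda_{\Z_{(p)}})$.
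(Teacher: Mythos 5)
The paper does not prove this theorem at all: it cites it as \cite[Thm.~3.3.2]{HP} and \cite[Cor.~7.2.16]{XZ}, and the only commentary in the text is the remark that follows the statement, which explains that Howard--Pappas construct a formal-scheme isomorphism $I(\Q)\backslash\mathrm{RZ}_G\times G(\A_f^p)/{\sf K}^p \xrightarrow{\sim} (\widehat{\mathscr S_{\sf K}\otimes W})_{/\mathcal N_b}$ whose $k$-points yield the displayed bijection. So there is no proof in the paper to compare yours against; what you have written is a reconstruction of the external argument, and it should be judged on that basis. Your outline of surjectivity is reasonable, but the two steps you pass over in a clause are exactly where Kisin, Howard--Pappas and Xiao--Zhu do the real work: that $g_p^{-1}b\sigma(g_p)$ lies in the double coset $G(W)\mu_x^\sigma(p)G(W)$ uses that the Hodge filtration on $\D(\mathcal A_y)(k)$ is again split by a $G$-valued cocharacter in the correct conjugacy class (\cite[Cor.~1.4.3(4)]{Kisin1}), and that $\iota_{{\sf K},x}(g_p,g^p)$ lands precisely at $y$, rather than at some other preimage of the same Siegel point under the finite normalization $\mathscr S_{\sf K}\to\mathscr S_{{\sf K}'}^{-}$, needs the uniqueness of the crystalline-tensor-compatible lift recorded in (\ref{map}).

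The genuine gap is in your treatment of the local identifications. At $\ell\ne p$, Tate's isogeny theorem identifies $\End^0(\mathcal A_x)\otimes\Q_\ell$ with $\End_{\Gal}(V_\ell\mathcal A_x)$, and from this $I_{\Q_\ell}$ is only the centralizer of the Frobenius (or Galois) action inside $G_{\Q_\ell}$, not a priori all of $G_{\Q_\ell}$. Passing to the full group requires that a power of Frobenius act centrally, which is precisely where the basic hypothesis enters; your sketch never explains where basicity is used at $\ell\ne p$, and without it the claimed identification is simply false (on a non-basic stratum one gets a proper Levi). Similarly, at $p$ the claim that ``any element of $J_b(\Q_p)$ preserving the crystalline tensors lifts to $\End^0(\mathcal A_x)^\times$'' is not a consequence of Dieudonn\'e theory in the naive sense: one must produce a genuine quasi-endomorphism of $\mathcal A_x$ from a tensor-preserving quasi-automorphism of the $p$-divisible group and simultaneously arrange compatibility with the prime-to-$p$ tensors, which is one of the central technical theorems of \cite{Kisin2}. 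Finally, the remark that the global inner form structure ``comes out of'' the local statements is backwards as written: one must also control the archimedean place and verify the cocycle/Hasse-principle bookkeeping to glue the local twists, which is a separate step in the cited references.
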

 We remark that Howard and Pappas constructed a  Hodge type Rapoport-Zink formal scheme 
${\rm RZ}_G$ over ${\rm Spf}(W)$, and showed that there is an isomorphism of formal schemes 
\[I(\Q) 
\backslash {\rm RZ}_G \times G(\A_f) 
/{\sf K}^p 
\xrightarrow{\sim} 
(\widehat{\mathscr S_{{\sf K}}\otimes W})_{/\mathcal N_b}\]
where 
the right hand side  
is the completion of 
$\mathscr S_{{\sf K}} \otimes W$ 
along the basic Newton stratum 
$\mathcal N_b$. 
The above bijection is obtained from taking the sets of $k$-valued points of this isomorphism. 

Any two points $x,y \in S_{\sf K}(k)$  are said to be in the same \emph{central leaf} if there exists an isomorphism of Dieudonn\'e modules
$\D(\mathcal  A_y) \to \D(\mathcal A_x)$ mapping $t_{\alpha,y}$ to $t_{\alpha,x}$. 
We write $\mathcal C(x)=\mathcal C_{{\sf K}}(x)$ for the central leaf passing through $x$.
\begin{cor}\label{c=z}
Assume that $x$ is lying on the basic Newton stratum $\mathcal N_b$. 
  Then we have  $\mathcal Z(x)=\mathcal C(x)$. 
In particular, $\mathcal C(x)$ is a finite set.
\end{cor}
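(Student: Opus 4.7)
The plan is to use the Rapoport--Zink uniformization of Theorem \ref{basic}, which provides a bijection
\[\vartheta_{{\sf K},x} : I(\Q) \backslash X_{\mu_x^{\sigma}}(b) \times G(\A_f^p)/{\sf K}^p \xrightarrow{\sim} \mathcal N_b(k),\]
and to identify both $\mathcal Z(x)$ and $\mathcal C(x)$ with the same explicit subset of the right-hand side. Since the source of the injection in Proposition \ref{Z_x} is a finite double coset, this will automatically yield the finiteness of $\mathcal C(x)$.

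First I would note that $\mathcal C(x) \subseteq \mathcal N_b(k)$: an isomorphism of Dieudonn\'e modules $\D(\mathcal A_y) \xrightarrow{\sim} \D(\mathcal A_x)$ sending $t_{\alpha,y}$ to $t_{\alpha,x}$ and commuting with Frobenius forces the $\sigma$-conjugacy classes of the associated Frobenius elements to agree, so $y$ lies on the same Newton stratum as $x$, namely $\mathcal N_b$.

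Next, I would unwind the central-leaf condition along $\vartheta_{{\sf K},x}$. For $y = \vartheta_{{\sf K},x}(g_p, g^p)$, the construction in $\S$\ref{sunif} identifies $\D(\mathcal A_y)$ with the submodule $g_p \cdot \D(\mathcal A_x) \subset \D(\mathcal A_x)[1/p]$, with Frobenius and crystalline tensors inherited from $\D(\mathcal A_x)[1/p]$. An isomorphism $g_p \D(\mathcal A_x) \xrightarrow{\sim} \D(\mathcal A_x)$ respecting Frobenius and the tensors $t_{\alpha,x}$ is then exactly the restriction of an element $j \in J_b(\Q_p)$ with $j g_p \in G(W)$. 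Consequently, $y \in \mathcal C(x)$ if and only if the class of $g_p$ in $X_{\mu_x^{\sigma}}(b) = G(K)/G(W)$ lies in the image of $J_b(\Q_p) \to G(K)/G(W)$. Invoking the identification $I_{\Q_p} = J_b$ from Theorem \ref{basic} together with the embedding (\ref{IQ/IZ}), this image coincides with that of $I(\Q_p)/I(\Z_p)$, and hence
\[\mathcal C(x) = \vartheta_{{\sf K},x}\bigl(I(\Q) \backslash (I(\Q_p)/I(\Z_p)) \times G(\A_f^p)/{\sf K}^p\bigr).\]

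Finally, I would match the right-hand side with $\mathcal Z(x)$. Theorem \ref{basic} also gives $I_{\Q_\ell} \cong G_{\Q_\ell}$ for each $\ell \neq p$, so under the fixed identification one has $I(\A_f^p) = G(\A_f^p)$, whence $U^p = I(\A_f^p) \cap {\sf K}^p = {\sf K}^p$ and $I(\A_f^p)/U^p = G(\A_f^p)/{\sf K}^p$. Combining with the $p$-component, one obtains
\[I(\Q) \backslash I(\A_f)/(I(\Z_p) \cdot U^p) = I(\Q) \backslash (I(\Q_p)/I(\Z_p)) \times G(\A_f^p)/{\sf K}^p,\]
and by Proposition \ref{Z_x} the image of this double coset under $\vartheta_{{\sf K},x}$ is $\mathcal Z(x)$. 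Comparison with the previous display yields $\mathcal Z(x) = \mathcal C(x)$, and finiteness of $\mathcal C(x)$ follows. The only non-routine step is the bookkeeping of the identifications supplied by Theorem \ref{basic}, in particular the matching $U^p = {\sf K}^p$; the rest is an unwinding of definitions.
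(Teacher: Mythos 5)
Your argument is correct and follows essentially the same route as the paper's own proof: both rely on Theorem~\ref{basic} (the bijection onto $\mathcal N_b(k)$, the identification $I_{\Q_p}=J_b$, and $I_{\Q_\ell}=G_{\Q_\ell}$ for $\ell\neq p$) together with the embedding (\ref{IQ/IZ}) to recognize $\mathcal C(x)$ inside the uniformized picture. The paper phrases this as two inclusions while you characterize both $\mathcal Z(x)$ and $\mathcal C(x)$ as the same explicit subset, but the substance is identical; the only cosmetic slip is writing $X_{\mu_x^\sigma}(b)=G(K)/G(W)$ (it is a subset, not all of it), which does not affect the argument.
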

We remark that the finiteness  also follows from a dimension formula for  central leaves given by C.~Zhang \cite[Thm. 2 (3)] {Zhang2}.
\begin{proof}
For any $g_p \in I(\Q_p)$, the map  \[ \D(\mathcal A_x) \to g_p\cdot \D(\mathcal A_x) \xrightarrow{\sim}\D(\mathcal A_{\iota_x(g_p)}): u \mapsto g_pu\] 
 is an isomorphism of Dieudonn\'e modules taking $t_{\alpha,x}$ to $g_pt_{\alpha,x}=t_{\alpha,x}=t_{\alpha, \iota_x(g_p)}$. 
Hence the set  $\mathcal Z(x)=\iota_x(I(\Q_p)/I(\Z_p) \times I(\A_f^p))$ is contained in  $\mathcal C(x)$.

Conversely, if  $y \in \mathcal C(x)$  then one has an  isomorphism of Dieudonn\'e modules 
\[h : \D(\mathcal A_y)
 \xrightarrow{\sim}
  \D(\mathcal A_x)\]
taking $t_{\alpha,y}$ to $t_{\alpha, x}$. 
Further, Theorem \ref{basic} implies that  $y=\iota_x((g_p \cdot G(W), g^p))$ for some $(g_p \cdot G(W), g^p) \in X_{\mu_x^{\sigma}}(b)\times G(\A_f^p)$ because $\mathcal C(x)$ is contained in the basic Newton stratum $\mathcal N_b(k)$. 
In particular one has an identification $\D(\mathcal A_y)
\xrightarrow{\sim} g_p\cdot \D(\mathcal A_x)$ such that  $t_{\alpha,y}=g_pt_{\alpha, x}=t_{\alpha,x}$.  
 Under this identification, the base extension   
  $h\otimes_{\Z_p} \Q_p$ defines an element in  $J_b(\Q_p)$, and the composition $h \circ (g_p\vert_{\D(\mathcal A_x)})$  defines an element in $G(W)$. 
  Hence   
 $g_p$ and $h^{-1}\otimes \Q_p$ define the same class
 in $X_{\mu_x^{\sigma}}(b)$,  and hence in $J_b(\Q_p)/(J_b(\Q_p) \cap G(W)).$ 
 Therefore, by Theorem \ref{basic}, the element $(g_p \cdot G(W), g^p)$ comes from $I(\Q_p)/I(\Z_p) \times I(\A_f^p)$ through the embedding (\ref{IQ/IZ}).   Thus, the point $y$ lies in $ \mathcal Z(x)$. 
\end{proof}
\section{Automorphic forms$\pmod p$ on Shimura varieties}\label{ssaut}
First we recall definitions and some   properties of  automorphic forms on  Shimura varieties. 
References are  \cite{GK} for  Hodge type and  \cite{Lan2} for PEL type. 
Next we see that  automorphic forms on a finite subset of the special fiber $S_{{\sf K}}(k)$ can be regarded as vector valued functions on the scheme associated to the principal bundle over this subset.
Finally, we construct a variant of the uniformization map, which will allow us to  
identify the automorphic forms on $\mathcal Z(x)$ with  algebraic modular forms.  
\subsection{Principal bundles and automorphic bundles}\label{autbdl}
Since $G_{\Z_p}$ is quasi-split, 
we may assume that the representative $\mu$ of $[\mu_h]$ extends to a cocharacter 
$\mu : \mathbb G_{m,W} \to G_W$. 
Define 
\[\mu_{\varphi} : \mathbb G_{m, W} 
\to 
\GSp(\Lambda_{\Z_{(p)}}, \psi)_W\] 
by $\mu_{\varphi} :=\varphi \circ \mu$ where $\varphi$ is the embedding defined in (\ref{iota}). 
Then the cocharacter $\mu_{\varphi}$ defines a decomposition $\Lambda_{\Z_{(p)}}\otimes W=\Lambda_0 \oplus \Lambda_{-1}$ over $W$  where  
 $\mathbb G_m$ acts through $\mu_{\varphi}$ by $z \mapsto z^{-i}$ on $\Lambda_i$ for $i=0, -1$.  

 Let $P_W\subset G_W$ be the parabolic subgroup whose Lie algebra consists of the non-negative weight spaces for $\mu$.  
  Then we have an identification $P_W=\{g \in G_W \vert \ g(\Lambda_0)=\Lambda_0\}$. 
  
  Let $L_W$ be the centralizer of $\mu$ in $G_W$. 
  Then $L_W$ is a Levi subgroup of $P_W$, i.e. a reductive closed subgroup defined over $W$ such that the canonical homomorphism $L_W \to P_W/R_u(P_W)$ is an isomorphism 
  where $R_u(\cdot)$ denotes the unipotent radical  (see \cite[A.6]{VW}). 
  %In particular, $L$ is flat over $W$.
  
 Put $H^1_{\rm{dR}}(\mathcal A/\mathscr S_{\sf K}) 
 :=\varphi ^{*}H^1_{\rm{dR}}(\mathcal A'/\mathscr S_{{{\sf K}'}}(\GSp, S^{\pm}))$ and \[\Fil^1(H^1_{\rm{dR}}(\mathcal A/\mathscr S_{\sf K}))=\Lie^t(\mathcal A):=\varphi^{*}\Lie^t(\mathcal A ').\] 
 We set a sheaf $\mathcal P$ on $\mathscr S_{{\sf K}}\otimes W$ of isomorphisms of coherent sheaves  preserving the  filtrations and the tensors 
 \begin{align*}
 \mathcal P=\mathcal P_{{\sf K}}:=\Isom_{\mathcal O_{\mathscr S_{\sf K}\otimes W}}( [\Lambda_{\Z_p}^{*}& \otimes \mathcal O_{\mathscr S_{\sf K}\otimes W},  \Lambda_0^{*}\otimes \mathcal O_{\mathscr S_{\sf K}\otimes W}, (s_{\alpha})],\\
 & [H^1_{\rm{dR}}(\mathcal A/\mathscr S_{\sf K}\otimes W), \Fil^1 H^1_{{\text dR}}, (t_{\alpha,\rm{dR}})] 
 ).\end{align*}
 By \cite[Prop.~4.3.9]{Madapusi}, this  $\mathcal P$ is a $P_W$-torsor. 
 We define an $L_W$-torsor $\mathcal L=\mathcal L_{{\sf K}}$ as the quotient $\mathcal P / R_u(P_W)$. 
 
We write $L$ (resp.~$P$) for the reduction $L_W \otimes _W k$ (resp. ~$P_W \otimes_Wk$). 
 Let $V$ be a rational  representation of  $L$ over $k$. 
 The \emph{automorphic bundle} 
 $\mathscr V(V)$  of weight $V$ is the vector  bundle on $S_{\sf K}$ defined by
 \[\mathscr V(V):=\mathcal L \vert_{S_{\sf K}} \times^{L} V=(\mathcal L \vert _{S_{\sf K}} \times V)/L\]
 where $L$ acts on $\mathcal L \vert _{S_{\sf K}}\times V$ via $(\phi, v)g=(\phi g, g^{-1}v)$. 
% By , for an open subset $U\subset \mathscr S_{\sf K}$  we have 
% \begin{align}\label{ab}
% \mathscr V(\gamma)(U)=
% \{f \in {\rm Map}_{U}(\mathbb E_L\vert_{U}, \ \mathbb V(V_{\gamma})\vert _{U}) \ \vert \  
% f(xg)=g^{-1}f(x), g\in 
% L
 %\}
 %\end{align}
%where $\mathbb E_L \to \mathbb E_L/L=\mathscr S_{\sf K}$ is the  principal bundle scheme associated to the sheaf $\mathcal L$, and $\mathbb{V}(V_{\gamma})= \Spec (\Sym^{\bullet}(V_{\gamma}\otimes \mathcal O_{\mathscr S_{\sf K}})) \to \mathscr S_{\sf K}$ is the vector bundle scheme associated to $V_{\gamma}$. 

The rational irreducible representations of $L$ are classified by   means of maximal tori and highest weights.  
 Fix a maximal torus $T$  of  $L$. 
 Let $X^*(T)$ (resp. $X_*(T)$) be the group of characters (resp. cocharacters) of $T$ and let 
$\langle \ , \ \rangle : X^*(T) \times X_{*}(T) \to \Z$ 
 be the perfect pairing. 
Write $\Phi:=\Phi(L,T)$ for the root system of  $L$ with respect to $T$.  
We choose a positive system  $\Phi^{+}\subset \Phi$ and set 
\[X_{+}^*=X^{*}_{+, L}(T):=\{\xi  \in X^{*}(T) \ \vert \ \langle \xi, \alpha^{\vee} \rangle \geq 0 \ \rm{for} \ \rm{all} \ \alpha \in \Phi^{+} \}.\] 
The elements of $X_{+}^*$ are called the \emph{dominant weights} of $T$ (with respect to $\Phi^{+}$). 
 
Let  $B$ be the  Borel subgroup of $L$ which is  generated by  the root groups of  negative roots. 
(This is the same convention as in \cite[II.1.8]{Jantzen}.)  
Then $B$  contains  $T$. 

For  each dominant weight $\xi \in X_{+}^*$, let $k_{\xi}$ be $k$ considered as a one dimensional rational  representation of $B$ via $\xi$. 
 Then $B$ acts freely on $L \times k_{\xi}$ via $(x,y)g=(xg, g^{-1}y)$.  
 We define a sheaf $\mathscr L(\xi)$ on the flag variety $L/B$ by 
 \[\mathscr L(\xi):=L \times ^{B}k_{\xi}=(L\times k_{\xi})/B.\] 
 Then the space 
  $H^0(\xi):=H^0(L/B, \mathscr L(\xi))$ is of  finite dimension and regarded as a rational  representation of $L$.  
 Furthermore, the maximal semisimple  subrepresentation  $V_{\xi}:={\rm soc}_{L}H^0({\xi})$ is the irreducible representation  with highest weight $\xi$. 
Then $V_{\xi}$ 
  with
   $\xi \in X_{+}^*$ are a system of representatives for the isomorphism classes of  rational irreducible   representations of $L$ over $k$   
   (cf. \cite[II.~Cor.~2.6]{Jantzen}).  
   We write $\mathscr V(\xi)$ for the automorphic bundle $\mathscr V(V_{\xi})$. 
   
 We remark that this notation is different from the one in \cite[$\S$4.1.9]{GK}, where  modules $V_{\eta}$ were defined as $H^0(\eta)$ above but  defined over the ring of integers of a finite  extension of $E_{\mathfrak p}$. 
In particular, the reductions  $V_{\eta} \otimes k$ were not  necessarily  irreducible as a representation of $L$. 
\subsection{Extensions to   toroidal compactifications}\label{ext} 
 Let ${\Sigma}'$ be a finite, admissible rpcd (rational, polyhedral cone decomposition) 
for the Siegel Shimura datum $({\rm{GSp}}, S^{\pm}, {{\sf K}'})$. 
By \cite{faltings-chai}, there exits a toroidal compactification $\mathscr S_{{\sf K}'}(\GSp, S^{\pm})^{\Sigma'}$ of $\mathscr S_{{\sf K}'}(\GSp, S^{\pm})$. 
Further, by \cite[Thm.~2.15 (1) and (2)]{Lan2}, 
the universal family $\mathcal A' \to \mathscr S_{{\sf K}'}(\GSp, S^{\pm})$ extends to a proper and log smooth morphism 
$\beta : \overline{\mathcal A'} \to 
\mathscr S_{{\sf K}'}(\GSp, S^{\pm})^{\Sigma'}$. 
By \cite[Thm.~2.15 (c) and (d)]{Lan2},  this morphism  induces  the log de Rham complex $\overline{\Omega}^{\bullet}_{\overline{\mathcal A'} / 
\mathscr S_{{\sf K}'}(\GSp, S^{\pm})^{\Sigma'}}$ and  
the log de Rham cohomology 
\[H^1_{\text{log-dR}}
(\overline{\mathcal A'}/\mathscr S_{{{\sf K}'}}(\GSp, S^{\pm})^{{\Sigma}'}):=R^1 \beta_* (\overline{\Omega}^{\bullet}_{\overline{\mathcal A'} / 
\mathscr S_{{\sf K}'}(\GSp, S^{\pm})^{\Sigma'}}),\]
which is a locally free   extension of $H^1_{\rm{dR}}
(\mathcal A'/\mathscr S_{{{\sf K}'}}(\GSp, S^{\pm}))$. 

Let $\Sigma$ be a refinement of the finite, admissible rpcd obtained from the pull-back $\varphi^*{\Sigma'}$ for $(G, X, {\sf K})$. 
By \cite[Thm.~4.1.5]{Madapusi}, there exist toroidal compactifications 
$\mathscr S_{\sf K}^{\Sigma}$ and  
$\mathscr S_{{\sf K}}^{\varphi^{*}\Sigma'}$ 
 of $\mathscr S_{\sf K}=\mathscr S_{\sf K}(G,X)$. 
 Then the map $\varphi$ extends to a map 
 $\varphi^{\Sigma/\Sigma'} : 
 \mathscr S_{\sf K}^{\Sigma} 
 \to 
 \mathscr S_{{\sf K}'}(\GSp, S^{\pm})^{\Sigma'}$. 
 We put 
  \begin{align*}  H^1_{\text{log-dR}}(\overline{\mathcal A}/\mathscr S_{\sf K}^{\Sigma}) :&=
 (\varphi^{\Sigma/\Sigma'})^*(H^1_{\text{log-dR}}(\overline{\mathcal A'}/\mathscr S_{{{\sf K}'}}(\GSp, S^{\pm})^{{\Sigma}'})), 
 \\
  \Fil^1H^1_{\text{log-dR}}(\overline{\mathcal A}/\mathscr S_{\sf K}^{\Sigma})  &=\Lie^t(\overline{\mathcal A}) := (\varphi^{\Sigma'/\Sigma})^{*}\Lie^t \overline{\mathcal A'}.\end{align*} 
 These give locally free extensions of $H^1_{\rm{dR}}(\mathcal A/\mathscr S_{\sf K})$ and $\Lie^t(\mathcal A)$ respectively.

By \cite[Prop.~4.3.7 (1)]{Madapusi}, 
        the tensors $(t_{\alpha,\rm{dR}})$ 
        extend uniquely to 
        $H^1_{\text{log-dR}}(\overline{\mathcal A}/\mathscr S_{\sf K}^{\Sigma})$. 
        By repeating the definition of 
        $\mathcal P$
         with 
         $\mathscr S_{\sf K}^{\Sigma}\otimes W$, 
         the extended tensors,  and $H^1_{\text{log-dR}}(\overline{\mathcal A}/\mathscr S_{\sf K}^{\Sigma})$, 
        we get an extension to $P_W$-torsor  $\mathcal P^{\Sigma}$ on $\mathscr S_{\sf K}^{\Sigma} \otimes W$ (\cite[Proposition 4.3.9]{Madapusi}). 
        Again we set a $L_W$-torsor \[\mathcal L^{\Sigma}:=
        \mathcal P^{\Sigma}/R_u(P)\]
         on $\mathscr S_{\sf K}^{\Sigma}\otimes W$. 
        
For each  $\xi \in X_{+}^*$, we define  a vector bundle $\mathscr V^{\rm{can}}(\xi)$ on the special fiber $S_{{\sf K}}^{\Sigma}$ by 
         \[\mathscr V^{\rm{can}}(\xi):=\mathcal L^{\Sigma} \vert_{S^{\Sigma}_{\sf K}} \times^{L}V_{\xi}. \]   
         Let $D=D^{\Sigma}_{\sf K}$ be the boundary divisor of $\mathscr S^{\Sigma}_{\sf K}$ relative to $\mathscr S_{\sf K}$. 
         Set $\mathscr V^{\rm{sub}}(\xi):=\mathscr V^{\rm{can}}(\xi)(-(D\otimes k ))$. 
         \begin{defn}
         For any $\xi \in X_{+}^*$, 
        we call $H^0(S_{{\sf K}}^{\Sigma}, \mathscr V^{{\rm can}}(\xi))$ (resp.~$H^0(S_{{\sf K}}^{\Sigma}, \mathscr V^{{\rm sub}}(\xi))$) the space of automorphic forms (resp.~cusp forms) on $S_{{\sf K}}^{\Sigma}$ of weight $\xi$. 
         \end{defn}

\subsection{Hecke operators and systems of eigenvalues}
Let  $v \neq p$ be  an unramified finite place for $G$ and ${\sf K}_v$ be a hyperspecial subgroup of $G(\Q_v)$. 
We write $\mathcal H_v=\mathcal H_v(G_v, {\sf K}_v; \Z_p)$ for the unramified Hecke algebra of $G$ at $v$  with $\Z_p$-coefficients,
  normalized by the unique Haar measure with ${\rm vol}({\sf K}_v)=1$. 
We define the unramified, global Hecke algebra $\mathcal H$ by the restricted tensor product 
\[\mathcal H=\mathcal H(G):={\bigotimes_{v  : {\rm unr}, v \neq p}}' \mathcal H_v.\]
Let $M$ be a finite dimensional $k$-vector space which is also an $\mathcal H$-module. 
We say that 
 a  system of Hecke eigenvalues $(b_T)_{T \in \mathcal H}$ appears    in $M$ 
 if there exists $f 
\in M$ such that $T  f=b_T  f$ for all $T \in \mathcal H$. 

Let $g\in G(\Q_v)$ for an unramified finite place $v \neq p$.  
The algebra  $\mathcal H_v$ is generated by the characteristic functions of the double cosets ${\sf K}_v g {\sf K}_v$, and  each characteristic function defines a linear operator $T_g$ of $H^0(S_{{\sf K}}, \mathscr V(\xi))$ as follows. 
Let $\pi_1:=\pi_{({\sf K}\cap g{\sf K}g^{-1})/{\sf K}} $ and $\pi_2:=\pi_{(g^{-1}{\sf K}g\cap {\sf K})/{\sf K}}\circ g$,   where 
\[g : \mathscr S_{{\sf K}\cap g{\sf K}g^{-1}} \xrightarrow{\sim} \mathscr S_{g^{-1}{\sf K}g\cap {\sf K}}\]
 is the right action. 
Then 
\[\pi_j  : \mathscr S_{{\sf K}\cap g{\sf K}g^{-1}} \to \mathscr S_{\sf K}\]
 are finite \'etale projections. 
Thus we have a trace map 
\[\tr \pi_j : 
\pi_{j,*}
\mathcal O_{\mathscr S_{{\sf K}\cap g{\sf K}g^{-1}}}
\to \mathcal O_{\mathscr S_{{\sf K}}}.\]
The automorphic vector bundle $\mathscr V(\xi)$ is  a $G(\A_f^p)$-equivariant coherent sheaf on the tower $(S_{\sf K})_{{\sf K}^p}$, in the sense of \cite[$\S$4.1.10]{GK}. 
Then $\pi_1^{*}\mathscr \mathscr V(\xi)=\pi_2^{*}\mathscr V(\xi)$ and  
the Hecke operator 
\[T_g :H^0(S_{\sf K}, \mathscr V(\xi)) \to H^0(S_{\sf K}, \mathscr V(\xi))\]
is defined by $T_g:=\tr \pi_2\circ \pi_1^{*}$. 

By \cite[$\S$8.1.5]{GK}, one has  the Hecke operator on a toroidal  compactification  
\[T_g^{\Sigma} :
 H^0(S_{\sf K}^{\Sigma}, 
 \mathscr V^{\bullet}(\xi))
  \to 
  H^0(S_{\sf K}^{\Sigma},
   \mathscr V^{\bullet}(\xi))\]
   for $\bullet\in \{\rm{can, sub}\}$.  
   The definition is more complicated than that on $S_{{\sf K}}$.

 By extending linearly to $\mathcal H_v$ and to the restricted tensor product for  all $v$, one  gets  $\mathcal H$-module structures of  $H^0(S_{{\sf K}}, 
   \mathscr V(\xi))$ and $H^0(S^{\Sigma}_{{\sf K}},
    \mathscr V^{\bullet}(\xi))$. 
   \subsection{Automorphic forms on finite subschemes} 
Let $\mathcal Y$ be a finite  reduced subscheme of $S_{{\sf K}}$ over $k$. 
 We show that automorphic forms on $\mathcal Y$ can be regarded as $L$-invariant vector valued  functions on the principal bundle $\mathcal L_{\mathcal Y}$. 
We write $\pi : \mathcal L_{\mathcal Y} \to \mathcal Y$ for the scheme associated to   the sheaf $\mathcal L \vert _{\mathcal Y}$ on $\mathcal Y$. 
We also  
 write  $\mathcal L_{\mathcal Y}$ for the set  of its $k$-points, for simplicity. 
 Then the fiber 
 $\mathcal  L_y:=\pi^{-1}(y)$ at $y \in \mathcal Y(k)$ can be identified with the space $H^0(\{y\}, \mathcal L \vert_{\{y\}})$.    
Since $\mathcal Y$ is finite and reduced, the bundle $\mathcal L_{{\mathcal Y}}
 =\bigsqcup_{y \in \mathcal Y} \mathcal L_y$  is a trivial $L$-torsor on $\mathcal Y$.

 Now let $V$ be a rational  representation of $L$ over $k$.
 We write 
  ${\rm Map}(\mathcal L_{\mathcal Y}, V)$ for  the set of functions on $\mathcal L_{\mathcal Y}$ to $V$. 
  For such a function $f$ and for an element $g \in L$, 
  we define a function $f^{g}$ by the rule  $f^{g}(z)=g^{-1}f(z \cdot g^{-1})$
   for $z \in \mathcal  L_{\mathcal Y}$. 
 This assignment $f \mapsto f^{g}$ induces a right $L$-action on ${\rm Map}
 (\mathcal 
 L_{\mathcal Y}, V)$. 
  \begin{lemma}[{cf. \cite[I.5.15]{Jantzen}}]\label{fun=aut}
There is  a natural bijection  
 \begin{align*}
 {\rm Map}(\mathcal L_{\mathcal Y}, V)^{L} 
 \xrightarrow{\sim}
 H^0(\mathcal Y, \mathscr V(V)).
 \end{align*}
  \end{lemma}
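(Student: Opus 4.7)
The plan is to construct the bijection explicitly, exploiting the fact that $\mathcal Y$ is a finite disjoint union of $k$-points. In particular $\mathcal L_{\mathcal Y} = \bigsqcup_{y \in \mathcal Y(k)} \mathcal L_y$, and global sections of any coherent sheaf on $\mathcal Y$ factor as a product over the individual closed points, so that
\[
H^0(\mathcal Y, \mathscr V(V)) = \prod_{y \in \mathcal Y(k)} (\mathcal L_y \times V)/L, \qquad
{\rm Map}(\mathcal L_{\mathcal Y}, V) = \prod_{y \in \mathcal Y(k)} {\rm Map}(\mathcal L_y, V).
\]
Thus there is no sheafification or gluing to perform and the claim reduces to a pointwise assertion for a single trivial $L$-torsor $\mathcal L_y$ over a point.

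For the forward map, given $f \in {\rm Map}(\mathcal L_{\mathcal Y}, V)^{L}$ I would set $s_f(y) := [z, f(z)] \in (\mathcal L_y \times V)/L$ for any choice of $z \in \mathcal L_y$. Unpacking $f^{g} = f$ yields the equivariance $f(z \cdot g) = g^{-1} f(z)$, which is exactly what makes $s_f(y)$ independent of the chosen $z$, since $[z \cdot g,\, g^{-1} f(z)] = [z \cdot g,\, f(z \cdot g)] = [z, f(z)]$ in $\mathscr V(V)_y = (\mathcal L_y \times V)/L$.

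For the inverse map, I use that $L$ acts freely and transitively on each fiber $\mathcal L_y$: fixing any base point $z_0 \in \mathcal L_y$ gives a bijection $V \xrightarrow{\sim} (\mathcal L_y \times V)/L$, $v \mapsto [z_0, v]$. So a section $s \in H^0(\mathcal Y, \mathscr V(V))$ determines, for each $z \in \mathcal L_y$, a unique $v(z) \in V$ with $s(y) = [z, v(z)]$; put $f_s(z) := v(z)$. Comparing $[z \cdot g,\, g^{-1} v(z)] = [z, v(z)] = s(y) = [z \cdot g,\, v(z \cdot g)]$ forces $v(z \cdot g) = g^{-1} v(z)$, i.e.\ $f_s \in {\rm Map}(\mathcal L_{\mathcal Y}, V)^{L}$. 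The two constructions are manifestly mutually inverse.

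Since $\mathcal Y$ is zero-dimensional and reduced, no scheme-theoretic subtleties intervene and the proof is parallel to the general associated-bundle isomorphism in \cite[I.5.15]{Jantzen}. The only point that requires real care, and the nearest thing to an obstacle, is verifying that the two $L$-action conventions line up correctly: the action $(\phi, v) \cdot g = (\phi \cdot g,\, g^{-1} v)$ used to define $\mathscr V(V) = \mathcal L \times^{L} V$, versus the right action $f^{g}(z) = g^{-1} f(z \cdot g^{-1})$ on ${\rm Map}(\mathcal L_{\mathcal Y}, V)$. Once these are matched, the bijection follows directly from the definitions.
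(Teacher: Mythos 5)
Your proof is correct and takes essentially the same route as the paper's: reduce to the finite disjoint union of points, build the forward map by sending $f$ to the $L$-orbit $[z,f(z)]$, and invert using freeness of the $L$-action on each fiber. You additionally spell out the compatibility of the two $L$-action conventions, which the paper leaves implicit.
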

  \begin{proof} 
By the definition of the bundle $\mathscr V(V)$,   one has an isomorphism  
 \begin{align*}\label{H^0}
 H^0(\mathcal Y, \mathscr V(V))
 \xrightarrow{\sim}
 \prod_{y \in \mathcal Y}
 \big (\mathcal L_y\times
 V)/  L 
 \end{align*}
 where $g \in L$ acts on $\mathcal L_y \times
 V$ via  
 $(z, v)\cdot g=(z g, g^{-1}v).$
 
 Let $f \in {\rm Map}(\mathcal L_{\mathcal Y}, V)^{L}$.  
  For each $y \in \mathcal Y$, we put  
  \[s_{f, y}:=\{(z, f(z)) : z \in \mathcal L_y\}\subset \mathcal L_y \times V.\]
   Since $f$ is $L$-invariant and $\mathcal L$ is an $L$-torsor, the set $s_{f, y}$ defines an $L$-orbit. 
 Thus we obtain an element  $s_f :=(s_{f,y})_y \in \prod_{y \in \mathcal Y}
 \big (\mathcal L_y\times
 V)/  L$.  
 
 On the other hand, take any $s=(s_y) \in \prod_{y \in \mathcal Y}
 \big ((\mathcal L_y\times
 V)/  L \big)$. 
 Then for each $z \in \mathcal L_{\mathcal Y}$ there exists a unique $v \in V$ such that the point $(z, v)$ belongs  to the $L$-orbit  $s_{\pi(z)}$, because $L$ acts freely on $\mathcal L_y\times
 V$. 
 One can see the function $f_s : \mathcal L_{\mathcal Y} \to V, \ z \mapsto v$ is $L$-invariant.
   Further, the  map $s \mapsto f_s$ is an inverse of $f \mapsto s_f$.  
\end{proof}        
 \subsection{Uniformization with sections of the principal bundle}\label{unifbdl}
Recall that we fixed an isomorphism $\Lambda_{\Z_{(p)}}^* \otimes W \cong \D(\mathcal A_x)$ in (\ref{isom}). 
We write $\mu_{x,k}$ for a  cocharacter $\mathbb G_{m k} \to G_k$ defining the Hodge filtration 
\[\Fil^1(\D(\mathcal A_x)(k))=\Lie^t(\mathcal A_x[p^{\infty}])(k) \subset \D(\mathcal A_x)(k)\]
as in $\S$\ref{tensor}. 
Let $P_x \subset G_k$ be the parabolic subgroup defined by $\mu_{x,k}$.
 Let $L_x$ be the centralizer of $\mu_{x,k}$ in $G_k$. 
 Then $L_x$ is a Levi subgroup of $P_x$ and there is a natural isomorphism 
 \[P_x/ R_u(P_x) \xrightarrow{\sim} L_x\]
  where $R_u(P_x)$ is the maximal unipotent subgroup of $P_x$. 
 One can regard $L_x$ as a subgroup of 
 \[\Aut_k({\rm gr}^{\bullet}(\D(\mathcal A_x)(k))):=\GL_k (\Fil^1 \D 
 (\mathcal A_x)(k))\times \GL_k(\D(\mathcal A_x)(k)/\Fil^1\D 
 (\mathcal A_x)(k)).\]

Let $g\in I(\Z_p)=I(\Q_p) \cap G(W)$. 
The reduction  $g \otimes_W k$ induces an automorphism of   $\D(\mathcal A_x)(k)$  preserving  the Hodge filtration. 
Hence we have a morphism from $I(\Z_p)$ to $P_x$ sending $g$ to $g \otimes _W k$. 
We write $U_p$ for the kernel  of the composition 
\begin{align}\label{pi}\varpi : I(\Z_p) \to P_x \to L_x.\end{align}
In other words,  
  the subgroup $U_p$ consists of  elements $g \in I(\Z_p)$ such that  
  $(g\otimes k) \vert_{\Fil^1}=\id$ and $g\otimes k \ (\bmod \ {\Fil^1})=\id$. 
   Then $U_p$ is an  open compact subgroup of $I(\Z_p)$. 
  We set a finite group   $I(p):=I(\Z_p)/U_p$ and regard it as a subgroup of $L_x$ via $\varpi$.

Now we fix a section   
  \begin{align*}
  \delta 
   \in \mathcal P_{x} & \cong 
  H^0(\{x\}, \mathcal P \vert _{\{x\}})
  \\
  & =
  \Isom ((\Lambda^*_k, 
 \Lambda^*_0\otimes k,
 (s_{\alpha}\otimes k)), (\D(\mathcal A_x)(k), {\rm \Fil}^1(\D(\mathcal A_x)(k)), (t_{\alpha,x}\otimes k))).
 \end{align*} 
 (We remark that we can choose an isomorphism  in (\ref{isom}) such that its reduction$\pmod p$ induces a section in $\mathcal P_x$.  
 However, that is not needed   for our purpose.)
 Then $\delta$ induces an identifications of the parabolic subgroups and 
  their Levi quotients 
 \begin{align}\label{P=P_x}P \xrightarrow{\sim}P_x, \ L \xrightarrow{\sim} L_x.
 \end{align} 

  We will construct a map
\begin{align}\label{diff}
\begin{split}
 I(\Q_p) & \rightarrow 
 \mathcal L_{{\mathcal Z}(x)}
  \end{split}
\end{align} 
by modifying the map $\iota_x$ in (\ref{map}). 
Let $g_p \in I(\Q_p)$. 
  The automorphism $g_p$ of $\D(\mathcal A_x)[1/p]$ induces an identification  
  \[g_p \vert_{\D(\mathcal A_x)} : \D(\mathcal A_x)\xrightarrow{\sim} g_p\cdot \D(\mathcal A_{x})=\D( A_{g_px})\] 
  of $W$-modules preserving the tensors. 
  Recall that the right hand side is equipped with a structure of Dieudonn\'{e} module, by restricting the Frobenius morphism ${\sf F}[1/p]$ of $\D(\mathcal A_x)[1/p]$.  
  Since  $g_p$ commutes with ${\sf F}[1/p]$, i.e.~$I(\Q_p)\subset  J_b(\Q_p)$,  
   the above identification $g_p \vert_{\D(\mathcal A_x)}$ preserves the structures  of Dieudonn\'{e} modules. 

%Recall that, in $\S$\ref{sunif}, 
% for each $g_p$  we constructed 
%a point   
%    $(A_{g_px}, \lambda_{g_px}, \eta_{g_px}) \in \mathscr S_{{\sf K}'}(\GSp, S^{\pm})(k)$ and a lift $\iota_x(g_p) \in S_K(k)$.
%Further, we have $\iota_x(g)=g^p (\iota_x(g^p))$. 
%    On the other hand, 
 %    the right action of $g^p$ on $\mathscr S_{{\sf K}'}(\GSp, S^{\pm})$ sends $(A_{g_px}, \lambda_{g_px}, \eta_{g_px})$ 
 %    to $(A_{g_px}, \lambda_{g_px}, \eta_{gx})$ where 
 %% \[
 %\eta_{gx}:=\eta_{g_px} \circ g^p : \Lambda^*_{\A_f^p}  \xrightarrow{\sim} \hat{T}^p( A_{g_px})_{\Q}.\]
 %Hence, the map  $\mathscr S_K \to \mathscr S_{{\sf K}'}(\GSp, S^{\pm})$ sends $\iota_x(g)$ to  
 %$(A_{g_px}, \lambda_{g_px}, \eta_{gx})$. 
  Recall that we write  $\mathcal A' \to \mathscr S_{{\sf K}'}(\GSp, S^{\pm})$ for the universal abelian scheme and $\mathcal A \to \mathscr S_{{\sf K}}$ for the pull-back via $\varphi$. 
 One has a canonical prime-to-$p$ quasi-isogeny $\mathcal A'_{\varphi(\iota_x(g_p))} \to A_{g_px}$, which induces an isomorphism 
   $\D(A_{g_px})\xrightarrow{\sim}
   \D(\mathcal A'_{\varphi(\iota_x(g_p))})=\D(\mathcal A_{\iota_x(g_p)})$  
   of Dieudonn\'{e} modules. 
 Under this identification, we obtain an isomorphism   
 \[ g_p \vert_{\D(\mathcal A_x)} \otimes_W k : \D(\mathcal A_x)(k)\xrightarrow{\sim} \D(\mathcal A_{\iota_x(g_p)})(k)\] preserving the filtrations and the crystalline tensors. 
  
  We define an element $\phi({g_p}) \in 
  \mathcal P_{\iota_x(g_p)}$  by the composition 
  \[
  \phi({g_p}):= 
  (g_p \vert_{\D(\mathcal A_x)}\otimes_W k) \cdot \delta  \ : \ 
   \Lambda^*_k 
    \xrightarrow{\sim} \D(\mathcal A_{\iota_x(g_p)})(k)\] 
    where $\delta$ is the fixed section. 
  By taking the  $R_u(P)$-orbit, we get an  element  \[\phi(g_p) R_u(P) \in  
  \mathcal L_{\iota_x(g_p)}.\]
 Since $\mathcal L_{\mathcal Z(x)}$ is the union $\bigsqcup_{y \in \mathcal Z(x)}\mathcal L_y$,  we obtain the desired map (\ref{diff}).    
 
For an inclusion ${\sf K}^p_1 \subset {\sf K}^p$ of open compact subgroups of $G(\A_f^p)$, we set ${\sf K}_1:={\sf K}_p{\sf K}_1^p$. 
Then the 
  projection $\pi_{{\sf K}_1/{\sf K}}$ induces a map 
 $\mathcal L_{\mathcal Z_{{\sf K}_1}(x)} 
 \to 
 \mathcal L_{\mathcal Z_{{\sf K}}(x)}$. 
 % We define a right $I(\A_f^p)$-action on $\mathcal L_{\mathcal Z(x)}$ as follows. ??? 
%Recall that we write  $\mathcal P_{{\sf K}}$ and $\mathcal P_{ h^{-1}{\sf K}h}$ for the principal bundles on $\mathscr S_{\sf K}$ and $\mathscr S_{h^{-1}{\sf K}h}$, respectively. 
% For each $h \in I(\A_f^p)$ and for each $y \in \mathcal Z_{{\sf K}}(x)$ there is a canonical identification 
% $\theta^*:\D(\mathcal A_{y}) \xrightarrow{\sim} \D(\mathcal A_{h(y)})$ where $\mathcal A_{h(y)}$ is the fiber over $h(y) \in \mathcal Z_{h^{-1}{\sf K}h}(x)$. 
% Hence there is an bijection   
% \[(\mathcal P_{{\sf K}})_y \xrightarrow{\sim} (\mathcal P_{ h^{-1}{\sf K}h})_{h(y)}, \  \phi \mapsto \theta^{*}\phi .\] 
% We define the right action 
% \[
% h:  \bigsqcup_{y \in \mathcal Z_{{\sf K}}(x)}
 % (\mathcal L_{ {\sf K}})_y 
 % \xrightarrow{\sim}
 % \bigsqcup_{z 
 % \in 
%  \mathcal Z_{h^{-1}{\sf K}h}(x)}
%  (\mathcal L_{ h^{-1}{\sf K}h})_z
%  \]
%  by taking the quotient  and the unions. 
The construction of map (\ref{diff}) is compatible with the level structures ${\sf K}^p$. 
Now we pick a point  
$x\in S_{{\sf K}_p}(k)$. 
Then we obtain an $I(\A_f^p)$-equivariant map 
 \[I(\A_f) \to \mathcal L_{\mathcal Z_{{\sf K}_p}(x)}:=\lim_{\substack{
 \longleftarrow \\
 {\sf K}^p}}\mathcal L_{\mathcal Z_{{\sf K}}(x)}.\]
 Let $U^p:=I(\A_f^p)\cap {\sf K}^p$. By taking the quotient, one has a map
  \[I(\A_f)/U^p \to 
  \mathcal L_{\mathcal Z_{{\sf K}}(x)}.\] 
\begin{prop}\label{propemb}
Let $U=U_pU^p$. 
Then the above map  induces an 
embedding 
\begin{align}\label{emb}
I(\Q)\backslash I(\A_f)/U & \rightarrow \mathcal L_{{\mathcal Z}(x)}, 
\end{align}
such that 
 the following diagram is commutative:
\[
\begin{CD}I(\Q)\backslash
I(\A_f)/U @>>>
\mathcal L_{\mathcal Z(x)} 
\\ 
@VVV  @VVV 
\\
I(\Q)\backslash 
I(\A_f)/I(\Z_p)U^p 
@>{\sim}>>
\mathcal Z(x).
\end{CD}
\]
\end{prop}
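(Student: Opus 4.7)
The plan is to verify four things about the proposed map: (i) right $U_p$-invariance, (ii) left $I(\Q)$-invariance, (iii) injectivity of the resulting quotient map, and (iv) commutativity of the diagram. Item (iv) is essentially built into the construction, since by design $\phi(g)$ is defined as an element of the fiber $\mathcal L_{\iota_{{\sf K},x}(g)}$ over $\iota_{{\sf K},x}(g)\in\mathcal Z(x)$, so projecting $\phi(g)$ to $\mathcal Z(x)$ recovers the image of $[g]$ under the bijection of Proposition~\ref{Z_x}. For the remaining steps I will work primarily with the $p$-component, relying on $I(\A_f^p)$-equivariance to handle the prime-to-$p$ part modulo $U^p$.

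For (i), given $u_p\in U_p\subset I(\Z_p)$, the lattice $g_p u_p\cdot\D(\mathcal A_x)$ equals $g_p\cdot\D(\mathcal A_x)$, and unwinding the definition of $\phi$ gives $\phi(g_p u_p) = (g_p|_{\D(\mathcal A_x)}\otimes k)\circ(u_p\otimes k)\circ\delta$. Since the section $\delta$ transports $P\xrightarrow{\sim} P_x$ and $R_u(P)\xrightarrow{\sim} R_u(P_x)$, and since $U_p = \ker\varpi$ is by definition the preimage of $R_u(P_x)$ in $I(\Z_p)$, the element $(u_p\otimes k)\circ\delta$ lies in $\delta\cdot R_u(P)$; hence $\phi(g u_p)$ and $\phi(g)$ agree in $\mathcal L_{\iota_x(g_p)} = \mathcal P_{\iota_x(g_p)}/R_u(P)$. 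For (ii), any $\gamma\in I(\Q)$ fixes every crystalline tensor $t_{\alpha,x}$ and commutes with ${\sf F}$, so its $p$-component supplies a canonical isomorphism $A_{g_p x}\xrightarrow{\sim} A_{\gamma g_p x}$ of triples, yielding $\iota_{{\sf K},x}(\gamma g) = \iota_{{\sf K},x}(g) =: y$, exactly as exploited in the proof of Corollary~\ref{c=z}. Under the induced identification of $\D(\mathcal A_{\iota_x(\gamma g_p)})$ with $\D(\mathcal A_y)$, the extra left factor of $\gamma$ in $\phi(\gamma g_p) = (\gamma\otimes k)\circ(g_p|_{\D(\mathcal A_x)}\otimes k)\circ\delta$ is cancelled, so $\phi(\gamma g)$ and $\phi(g)$ represent the same element of $\mathcal L_y$.

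For injectivity (iii), suppose $[g],[g']\in I(\Q)\backslash I(\A_f)/U$ have the same image in $\mathcal L_{\mathcal Z(x)}$. Projecting to $\mathcal Z(x)$ and invoking Proposition~\ref{Z_x}, I can adjust $g'$ within its $I(\Q)$- and $U^p$-classes to arrange $g' = g u_p$ for some $u_p\in I(\Z_p)$. The computation in (i) extends without the assumption $u_p\in U_p$ to show that $\phi(g u_p)$ and $\phi(g)$ differ in the $L$-torsor $\mathcal L_y$ precisely by the right action of $\varpi(u_p)\in I(p)\subset L$; equality then forces $\varpi(u_p) = 1$, i.e.\ $u_p\in U_p\subset U$, so $[g]=[g']$. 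The most delicate point in this plan will be the cancellation claimed in (ii): making precise the sense in which the canonical isomorphism $A_{g_p x}\cong A_{\gamma g_p x}$ undoes the left $\gamma$-factor in $\phi(\gamma g_p)$ requires careful tracking of the contravariant functoriality of $\D$ together with the chosen base section $\delta$ and the canonical prime-to-$p$ quasi-isogeny $\mathcal A_{\iota_x(g_p)}\to A_{g_p x}$.
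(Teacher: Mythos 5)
Your overall plan — right $U_p$-invariance, left $I(\Q)$-invariance, injectivity, and built-in commutativity — matches the structure of the paper's proof, and your treatment of (i), (iii), (iv) is essentially identical to the paper's. The gap is in (ii), and you are right to flag it as the delicate point: but the missing ingredient is not merely ``careful tracking of contravariant functoriality.'' The issue is that two different canonical prime-to-$p$ quasi-isogenies are in play, namely $\mathcal A_{\iota_x(g)}\to A_{g_px}$ and $\mathcal A_{\iota_x(j\cdot g)}\to A_{(j\cdot g)_px}$, where $\iota_x(j\cdot g)=\iota_x(g)$. The cancellation you want holds precisely when these two are intertwined by the quasi-isogeny $\theta\colon A_{(j\cdot g)_px}\to A_{g_px}$ induced by $j$. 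A priori that intertwining is only up to an automorphism $\psi$ of $(\mathcal A'_{\varphi(\iota_x(g))},\lambda,\eta_{{\sf K}'})$; the paper's proof observes that this $\psi$ is a self-quasi-isogeny respecting the polarization and the level-${\sf K}'^p$ structure, and then invokes the smallness (neatness) of ${\sf K}'^p$ to conclude $\psi=\mathrm{id}$. Without identifying and killing this automorphism, the equality $\phi(j\cdot g)=\phi(g)$ in the fiber $\mathcal L_y$ does not follow, so your step (ii) as written is incomplete. Once you add that one observation, the rest of your argument goes through exactly as in the paper.
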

\begin{proof}
First we show  the map is well-defined. 
The group $I(\Z_p)$ acts stably on $\D(\mathcal A_x)$.  
Hence, for any $g_p \in I(\Q_p)$ and  $u \in U_p (\subset I(\Z_p))$ one has 
  \begin{align*}
  \phi({g_pu})&= 
  (g_pu \vert_{\D(\mathcal A_x)}\otimes k) \cdot \delta  \\
  &=
  (g_p \vert_{\D(\mathcal A_x)}\otimes k) \cdot (u\otimes k) \cdot  \delta \\
  &= 
  (g_p \vert_{\D(\mathcal A_x)}\otimes k) \cdot \delta \cdot \delta^{-1} \cdot (u \otimes k)  \cdot \delta 
  \\
  &= \phi(g_p) \cdot  \delta^{-1}\cdot (u \otimes k)\cdot 
  \delta.
  \end{align*}
 Since $U_p \subset R_u(P_x)$, the element $\delta^{-1}\cdot (u \otimes  k)
  \cdot \delta$ belongs to $R_u(P)$.  
  Thus $\phi(g_pu)$  lies in the $R_u(P)$-orbit of $\phi(g_p)$. 
  
    Let $j \in I(\Q)$. 
    We will show that $\phi(j\cdot g) = 
    \phi(g)$ for every element $g \in I(\A_f)$.    
    By the construction in 
    $\S$\ref{sunif},
   the isogeny $j$ induces a prime-to-$p$ quasi-isogeny 
   \[ \theta : 
    (A_{(j \cdot g )_px}, \lambda_{(j\cdot g)_{p}x}, 
    \eta_{(j \cdot g)^p x} )
    \to 
    (A_{g_px}, 
   \lambda_{g_px}, \eta_{g^px}
   )
    \]
     such that the following   diagram of isomorphisms of  Dieudonn\'{e} modules is commutative: 
    \begin{align}\label{D(j)}
 \begin{CD}
 \D(\mathcal A_x) @>{g_p}>> g_p \D(\mathcal A_x) 
  @=   
  \D(A_{g_px})
  \\
  @\vert @V{j_p}
 VV @V{\D(\theta)}VV 
  \\
  \D(\mathcal A_x) @>{j_pg_p}>> j_p g_p \D(\mathcal  A_x) 
  @=
  \D(A
  _{(j \cdot g)_px}).
  \end{CD}
  \end{align}
    Here, we write  
 $j_p \in I(\Q_p)$ for the image of $j$.    
  Now we define a prime-to-$p$ quasi-isogeny 
   \[\psi : 
   (\mathcal A'_{\varphi(\iota_x(j\cdot g))},
    \lambda_{\varphi(\iota_x(j\cdot g))}, 
    \eta_{{\sf K}',\varphi(\iota_x(j\cdot g))}) 
    \to  
    (\mathcal  A'_{\varphi(\iota_x(g))}, \lambda_{\varphi(\iota_x(g))},\eta_{{\sf K}',\varphi(\iota_x(g))})\]
      by the following  commutative diagram: 
\[
 \begin{CD}
 \mathcal A'_{\varphi(\iota_x(j\cdot g))}  @>>>
   A_{(j\cdot g)_px}
  \\
   @V{\psi}VV @V{\theta}VV
   \\
    \mathcal A'_{\varphi(\iota_x(g))} @>>> 
     A_{g_p x}  
  \end{CD} 
 \]
 Here, the horizontal arrows are the canonical prime-to-$p$ quasi-isogenies as above.  
 But 
 by Proposition \ref{kis}, we have 
    $\iota_x(j\cdot g)=\iota_x(g)$. 
  Since ${\sf K}'^p$ is small,  the endomorphism $\psi$ is the identity morphism.  
  Hence one has a  commutative diagram
 \[
 \begin{CD}
  \D(A_{g_px})
  @>{\sim}>>
  \D(\mathcal A'_{ \varphi(\iota_x(g))})
  @=
  \D(\mathcal A_{\iota_x(g)})
  \\
   @V{\D(\theta)}VV 
 @{\vert}
 @{\vert}
  \\
  \D(A
  _{(j \cdot g)_px}) @>{\sim}>> 
  \D(\mathcal A'_{ \varphi(\iota_x(j \cdot g))})
  @=
  \D(\mathcal A_{\iota_x
  (j \cdot g)
  }).
  \end{CD}
  \]
 This diagram and (\ref{D(j)}) show that the element $\phi(j \cdot g)$ coincides with $\phi(g)$.
 Thus we see the map (\ref{emb}) is well-defined.  
 
  Next we show the map is  injective. 
   Suppose that 
   $\phi(g)\cdot R_u(P)=\phi(g')\cdot R_u(P)$ for $g,g'\in I(\A_f)$. 
   Then we have
  $\iota_x(g)=\iota_x(g')$.
   Hence, by Proposition \ref{kis},  we have 
     $g'=j \cdot g h_ph^p$ for some  elements $j\in I(\Q)$, $h_p \in I(\Z_p)$, and $h^p\in U^p$. 
   It suffices to show that $h_p$ belongs to $U_p$. 
  We have 
  \begin{align*}
  \phi(g') & =((j \cdot g)_p h_p \otimes k)\delta 
   =((j \cdot g)_p\otimes k)\delta \delta^{-1}(h_p\otimes k)\delta
  \\
  &=\phi(j\cdot g)\delta^{-1}(h_p\otimes k)\delta=\phi(g)
  \delta^{-1}(h_p \otimes k)\delta.
  \end{align*}
  Hence the assumption implies 
  that $\delta^{-1}(h_p \otimes k)\delta$ belongs to $R_u(P)$, and further this implies that 
  $h_p \in R_u(P_x)$. 
  Thus the element  $h_p $ belongs to $U_p=\ker (\varpi : I(\Z_p) \to P_x/R_u(P_x))$, and  
  we see the map is injective.   
 \end{proof}
 For an open compact ${\sf K}^p_1 \subset {\sf K}^p$, we set $U_1:=U_p\cdot (I(\A_f^p)\cap {\sf K}_1^p)$.  
Then the projection $\pi_{{\sf K}_1/{\sf K}}$ and the embedding (\ref{emb})  induce a commutative diagram  
 \[
 \begin{CD}
  I(\Q) \backslash I(\A_f) /U_1 @>>> 
 I(\Q) \backslash I(\A_f) /U
 \\
 @VVV 
 @VVV
 \\
 \mathcal L_{\mathcal Z_{{\sf K}}(x)} 
 @>>> 
 \mathcal L_{\mathcal Z_{{\sf K}_1}(x)}.
 \end{CD}
 \] 
 Further, the right actions by an element $h \in I(\A_f^p)$ give a  commutative diagram 
  \begin{align*}
  \begin{CD}
 I(\Q) \backslash I(\A_f)/U   @>{h}>>
   I(\Q) \backslash I(\A_f)/h^{-1}U h
  \\
  @VVV 
  @VVV 
  \\
    \mathcal L_{\mathcal Z_{\sf K}(x)}
 @>{h}>> 
   \mathcal L_{\mathcal Z_{h^{-1}{\sf K}h}(x)}.
   \end{CD}
  \end{align*}
\section{Algebraic modular forms$\pmod p$ and Hecke eigensystems}
 \subsection{Algebraic modular forms}\label{ssalg}
 First we review some basic properties of algebraic modular forms. 
A reference is the original work of Gross \cite{Gross}. 
 Set $U^p=I(\A_f^p)\cap {\sf K}^p$ and $U=U_pU^p$. 
Let $V_{\tau}$ be a finite dimensional $k[I(p)]$-module. 
We write  ${\rm Map}(I(\Q)\backslash I(\A_f) /U, V_{\tau})$ for the $k$-vector space of  functions  from the finite set  $I(\Q)\backslash I(\A_f) /U$ to $V_{\tau}$. 
 For a function $f$ and for an element $g \in I(p)$, we define a function $f^g$ by the rule   
 \[f^{g}(x)=g^{-1}f(x \cdot g^{-1}),  \ x \in I(\Q)\backslash I(\A_f) /U\] 
where  $I(p)=I(\Z_p)/U_p$ acts on $I(\Q)\backslash I(\A_f) /U$ through the inclusion  $I(\Z_p)/U_p \subset I(\Q_p)/U_p$. 
Then $f \mapsto f^g$  induces a right $I(p)$-action on the space ${\rm Map}(I(\Q)\backslash I(\A_f) /U, V_{\tau})$. 
  \begin{defn}  
We define the space of \emph{algebraic modular forms} (mod $p$) of weight $V_{\tau}$ and level $U$ on $I$ as the $I(p)$-invariant space:  
\begin{align*}
\begin{split}
M^{\rm{alg}}(I, V_{\tau}, U):=  
{\rm Map}(I(\Q)\backslash I(\A_f) /U, V_{\tau})^{I(p)}.
  \end{split}
  \end{align*} 
  \end{defn} 
  \begin{prop}[{cf.~{\cite[Prop.~4.5]{Gross}}}]\label{propGross}
{\rm (1)} If we fix representatives 
 $\{g_{\alpha}\}$ 
  for the classes in the double coset $I(\Q) \backslash I(\A_f)/I(\Z_p)U^p $, 
then the map $f \mapsto (\ldots, f(I(\Q)g_{\alpha}U), \ldots)$ gives an isomorphisms of $k$-vector spaces
\[M^{{\rm alg}}(I, V_{\tau}, U) \xrightarrow{\sim}  V_{\tau}^{\oplus {I(\Q) \backslash I(\A_f)/I(\Z_p)U^p }}.\]
In particular one has  
\begin{align}\label{dimalg}
\dim_kM^{{\rm alg}}(I, V_{\tau}, U)=\lvert \mathcal Z(x) \rvert \cdot  \dim_k V_{\tau}.\end{align}

{\rm (2)} If we regard 
 $M^{{\rm alg}}(I, *, U)$ 
 as a functor from the category of finite dimensional  $k[I(p)]$-modules to the category of finite dimensional $k$-vector spaces, then this is an exact functor. 
\end{prop}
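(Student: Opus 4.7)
The plan is to reduce both assertions to the single fact that the right action of $I(p)$ on the finite set $X := I(\Q) \backslash I(\A_f)/U$ is \emph{free}, with orbit set canonically identified with $Y := I(\Q) \backslash I(\A_f)/I(\Z_p) U^p$. Granting this, an $I(p)$-invariant function $f \in {\rm Map}(X, V_{\tau})^{I(p)}$ is uniquely and freely determined by its values $f([I(\Q) g_{\alpha} U]) \in V_{\tau}$ at a chosen set of orbit representatives $\{g_{\alpha}\}$ for $Y$: any choice of such values extends uniquely to a function on $X$ via the equivariance rule $f(x \cdot g) = g^{-1} f(x)$, and freeness ensures there are no stabilizer constraints on these values. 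This yields the claimed isomorphism with $V_{\tau}^{\oplus |Y|}$, and the dimension formula (\ref{dimalg}) follows from the bijection $Y \xrightarrow{\sim} \mathcal{Z}(x)$ supplied by Proposition \ref{Z_x}. For part (2), the isomorphism in (1) is manifestly natural in $V_\tau$, so $M^{\rm alg}(I, -, U)$ is isomorphic to the functor $V \mapsto V^{\oplus |Y|}$, which is exact on finite-dimensional $k[I(p)]$-modules.

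The orbit identification is formal: since $U_p \subset I(\Z_p)$ and $U = U_p U^p$, two classes $[I(\Q) x U], [I(\Q) x' U] \in X$ lie in the same $I(p)$-orbit if and only if $x' \in I(\Q) \cdot x \cdot I(\Z_p) U^p$, i.e. they share the same image in $Y$. The substantive step is freeness. Suppose $h \in I(\Z_p)$ stabilizes $[I(\Q) g_{\alpha} U]$; then $g_{\alpha} h = j g_{\alpha} u$ for some $j \in I(\Q)$ and $u = u_p u^p \in U_p U^p$, so
\[ j \;=\; g_{\alpha}(h u^{-1}) g_{\alpha}^{-1} \;\in\; g_{\alpha} \cdot I(\Z_p) U^p \cdot g_{\alpha}^{-1} \cap I(\Q). \]
The right-hand side is an arithmetic subgroup of $I(\Q)$; since ${\sf K}^p$ (and hence $U^p$) is assumed sufficiently small---which we take to mean \emph{neat}---this subgroup is trivial. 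Therefore $j = 1$, so $g_{\alpha} h = g_{\alpha} u$ and $h = u \in U \cap I(\Q_p)$. But inside $I(\A_f) = I(\Q_p) \times I(\A_f^p)$, the element $h$ has trivial prime-to-$p$ component while $u^p \in U^p \subset I(\A_f^p)$, forcing $u^p = 1$ and $h = u_p \in U_p$, i.e.\ $h$ is trivial in $I(p) = I(\Z_p)/U_p$.

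The main technical point is the upgrade from finiteness to \emph{triviality} of the arithmetic subgroup $g_{\alpha}(I(\Z_p)U^p)g_{\alpha}^{-1} \cap I(\Q)$. Finiteness was recorded in the paragraph preceding Proposition \ref{Z_x}, using positivity of the Rosati involution on $\text{End}^0(\mathcal{A}_x)$; triviality rests on the neatness of ${\sf K}^p$, which can always be arranged by shrinking ${\sf K}^p$ and is the standard meaning of ``sufficiently small'' in the Shimura-variety setting. Once this point is granted, everything else is bookkeeping with a free finite group action on a finite set.
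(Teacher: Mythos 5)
Your proof is correct and takes essentially the same approach as the paper: both rest on the key fact that for ${\sf K}^p$ sufficiently small one has $I(\Q) \cap g_\alpha\,(I(\Z_p)U^p)\,g_\alpha^{-1} = \{1\}$, which you package as freeness of the $I(p)$-action on $I(\Q)\backslash I(\A_f)/U$ while the paper packages it as uniqueness of the decomposition $g = j\, g_\alpha\, h_p\, h^p$ used to write down the explicit inverse $\mathbf{v} \mapsto f_{\mathbf{v}}$. Your naturality argument for part (2) is marginally more direct than the paper's (which deduces full exactness from left exactness plus the dimension formula), but the substance is identical.
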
 
\begin{proof}
We recall that ${\sf K}^p$ is assumed to be sufficiently small. 
Therefore, the group $U^p:= I(\A_f^p)\cap {\sf K}^{p}$ satisfies $I(\Q) \cap 
g_{\alpha}(I(\Z_p)U^p) g_{\alpha}^{-1}=\{1\}$ for each $g_{\alpha}$. 
Hence  
 any element $g \in I(\A_f)$ can be uniquely written as $g=
jg_{\alpha} h_ph^p$ for some representative $g_{\alpha}$,    $j \in I(\Q)$, $h_p \in I(\Z_p)$, and $h^p \in U^p$. 
We put $\bar{h}_p:=h_pU_p \in I(p)$.  
By the uniqueness, the   assignment
  \[{\bf{v}}=(\ldots, v_{\alpha}, \ldots)  \mapsto [f_{{\bf v}} : I(\Q)gU \mapsto\bar{h}_p^{-1}\cdot 
 v_{\alpha}]\] 
  induces a well-defined map $ V_{\tau}^{\oplus{I(\Q) \backslash I(\A_f)/I(\Z_p)U^p }}
   \to M^{\rm alg}(I, V_{\tau}, U)$.  This gives an  inverse of the above map, and  
   thus the assertion (1) follows. 
   
  The left exactness of $M^{{\rm alg}}(I,  * , U)$ 
   follows from the definition,  without the assumption on ${\sf K}^p$.  
  Formula  (\ref{dimalg}) implies the exactness.  
\end{proof}
Next we interpret  algebraic modular forms  as  $L$-invariant vector-valued functions on the scheme $\mathcal L_{\mathcal Z(x)}$. 
We use   
 the embedding $I(\Q)\backslash 
 I(\A_f)/U \hookrightarrow \mathcal L_{\mathcal Z(x)}$ constructed in Proposition \ref{propemb}. 
  We fix a section $\delta$ and the identification  $  
  L_x \cong L$ as in (\ref{P=P_x}). 
Under this identification,    we regard $I(p)$ as a subgroup of $L$. 
 \begin{prop}\label{alg} 
For a rational representation $V$ of $L$ over $k$,   there is an isomorphism of $k$-vector spaces 
 \[ {\rm Map}(\mathcal L_{\mathcal Z(x)}, V)^{L}  
 \xrightarrow{\sim}
 M^{\rm{alg}}\left(I, \Res^{L}_{I(p)}
 V ,U\right). 
 \]
 \end{prop}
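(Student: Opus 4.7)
The plan is to build the isomorphism by pulling back $L$-invariant functions on $\mathcal L_{\mathcal Z(x)}$ along the embedding
\[\iota : I(\Q)\backslash I(\A_f)/U \hookrightarrow \mathcal L_{\mathcal Z(x)}\]
from Proposition \ref{propemb}, and then showing that this restriction map is a bijection onto the space of algebraic modular forms.

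The first and crucial step is to verify that $\iota$ is equivariant for the relevant right actions. On the source, $I(p)=I(\Z_p)/U_p$ acts through the inclusion $I(\Z_p)/U_p\subset I(\Q_p)/U_p$ by right multiplication; on $\mathcal L_{\mathcal Z(x)}=\bigsqcup_{y\in\mathcal Z(x)}\mathcal L_y$, the subgroup $I(p)\subset L$ acts through the identification $L\cong L_x$ coming from the fixed section $\delta$ in (\ref{P=P_x}). Concretely, for $g\in I(\A_f)$ with $p$-component $g_p$ and a lift $\tilde h\in I(\Z_p)$ of $h\in I(p)$, the same kind of manipulation already used in the proof of Proposition \ref{propemb} gives
\[\phi(g_p\tilde h) = (g_p\otimes k)\cdot \delta\cdot\bigl(\delta^{-1}(\tilde h\otimes k)\delta\bigr) = \phi(g_p)\cdot \bigl(\delta^{-1}(\tilde h\otimes k)\delta\bigr),\]
and the element $\delta^{-1}(\tilde h\otimes k)\delta\in P$ descends in $L=P/R_u(P)$ precisely to the image $\varpi(\tilde h)=h$ via the identification $L\cong L_x$. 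Passing to $R_u(P)$-orbits yields $\iota([g]\cdot h)=\iota([g])\cdot h$ in $\mathcal L_y$, as required.

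With the equivariance in hand, define $\Phi(f):=f\circ\iota$. By the $L$-invariance of $f$ and the equivariance of $\iota$, the function $\Phi(f)$ is $I(p)$-invariant, so $\Phi$ takes values in $M^{\mathrm{alg}}(I,\Res^{L}_{I(p)}V,U)$. For bijectivity, I use that each $\mathcal L_y$ is a trivial (i.e.\ free transitive) $L$-torsor, so an $L$-invariant function on $\mathcal L_y$ is determined by its value at any single point via the rule $f(z\cdot \ell)=\ell^{-1}f(z)$. Since ${\sf K}^p$ is sufficiently small, the fiber of $I(\Q)\backslash I(\A_f)/U \to I(\Q)\backslash I(\A_f)/I(\Z_p)U^p$ over any given class is canonically identified with $I(\Z_p)/U_p=I(p)$, and $\iota$ sends it to a single $I(p)$-orbit inside $\mathcal L_y$. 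Given an algebraic modular form $\bar f$, the formula $\tilde f(\iota([g])\cdot \ell):=\ell^{-1}\bar f([g])$ for $\ell\in L$ produces the unique $L$-invariant extension to $\mathcal L_y$; independence of the choice of representative $[g]$ within its fiber is exactly the $I(p)$-equivariance of $\bar f$. Assembling these extensions over $y\in \mathcal Z(x)$ yields an inverse to $\Phi$.

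The main obstacle is the equivariance check in the second paragraph: it forces us to keep careful track of how the $L$-action on $\mathcal L_y$ is transported from the $P$-action on $\mathcal P$ by the section $\delta$, and to match this against the finite quotient $\varpi$ of (\ref{pi}). Once this bookkeeping is settled, the remaining work — well-definedness of $\Phi$, injectivity, and the pointwise extension producing the inverse — is purely formal, relying only on the trivial torsor structure of each $\mathcal L_y$ and on the fiber description of $\iota$ coming from the smallness of ${\sf K}^p$.
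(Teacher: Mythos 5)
Your proof is correct and takes essentially the same approach as the paper's, which observes directly that $\widetilde{\mathcal Z(x)}\to\mathcal Z(x)$ is an $I(p)$-torsor sitting inside the trivial $L$-torsor $\mathcal L_{\mathcal Z(x)}\to\mathcal Z(x)$, so restriction of $L$-invariant functions is an isomorphism onto $I(p)$-invariant functions. Your explicit equivariance check $\iota([g]\cdot h)=\iota([g])\cdot h$ and the pointwise construction of the inverse are the details the paper leaves implicit.
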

 \begin{proof}
 Now we write $\widetilde{\mathcal Z(x)}$ for the image of the embedding  $I(\Q)\backslash I(\A_f)/U  \rightarrow \mathcal L_{{\mathcal Z}(x)}$ in  (\ref{emb}).  
 Then the natural morphism $\widetilde{\mathcal Z(x)} \to \mathcal Z(x)$ is an $I(p)$-torsor, and in particular  one has
  $\widetilde{\mathcal Z(x)} /I(p)=\mathcal Z(x)=
  \mathcal L_{\mathcal Z(x)}/L$. 
 Hence the restrictions of  functions to $\widetilde{\mathcal  Z(x)}$   
 induce an isomorphism
 \begin{align*}
 {\rm Map}\left(\mathcal L_{\mathcal Z(x)}, V\right)^{L} 
 &\xrightarrow{\sim}
 {\rm Map}\left(\widetilde{\mathcal Z(x)}, \Res_{I(p)}^{L}V\right)^{I(p)}
 =
 M^{\rm alg}
 \left(I, \Res_{I(p)}^{L} V, U\right).  
 \end{align*} 
 \end{proof} 
 Combining Lemma \ref{fun=aut} and Proposition \ref{alg}, 
 one has an isomorphism 
 \begin{align}\label{cl}
 H^0(\mathcal Z(x), \mathscr V(V)) \cong M^{\rm alg}(I, \Res_{I(p)}^LV,U).
 \end{align}
 \subsection{Hecke eigensystems on basic central leaves} 
 Now we assume that the point $x$ is lying on the basic Newton stratum $\mathcal N_b$. 
Then we have  $\mathcal Z(x)=\mathcal C(x)$ and $I(\A_f^p)=G(\A_f^p)$ by Theorem  \ref{basic}.
In particular one has $U^p={\sf K}^p$. 

Let $h\in G(\Q_v)$ for an  unramified finite place $v \neq p$ of $G$.  
Each characteristic function of ${\sf K}_vh{\sf K}_v$  defines a linear operator $T_h$ of $M^{\rm{alg}}(I, 
 V,U)$ as follows. 
Write 
${\sf K}_v h {\sf K}_v=\bigsqcup_{i} h_i {\sf K}_v$ as a disjoint union of a finite number of single cosets. 
For $f \in M^{{\rm alg}}(I, V, U)$ we set a function $T_h(f)$ as   
 \[T_h(f)(g):=\sum_{i}f(gh_i),  \ g \in G(\A_f).\]
 Then $T_h(f)$ defines an element in $M^{\rm alg}(I, V, U)$. 
 By extending linearly to $\mathcal H_v$ and to the restricted tensor product for  all $v$, one  get an  $\mathcal H$-module structure  of $M^{\rm alg}(I, V, U)$. 

By the construction,  this $\mathcal H$-module structure coincides with the one induced by the projections and the right actions
\begin{align*}
\pi_{{\sf K}_1/{\sf K}} & : I(\Q) \backslash I(\A_f) /U_1  \to 
 I(\Q) \backslash I(\A_f) /U, 
 \\
h & :  I(\Q) \backslash I(\A_f) /U  \to 
 I(\Q) \backslash I(\A_f) /h^{-1}Uh\end{align*}
 for compact open subgroups ${\sf K}^p_1 \subset {\sf K}^p$ and for  elements  
  $h \in G(\A_f^p)$. 
 Hence, the isomorphism (\ref{cl}) is compatible with $\mathcal H$-actions. 

Now 
we show that  systems of Hecke eigenvalues appearing in  automorphic forms on $S_{\sf K}$ are  the same as those appearing on the  central leaf $\mathcal C(x)$ for any $x \in \mathcal N_b(k)$. 
We write $D$ for the boundary divisor of $\mathscr S_{\sf K}$ in $\mathscr S_{\sf K}^{\Sigma}$, and $\omega=\det \Lie^t(\overline{\mathcal A})$ for  the Hodge line bundle on $\mathscr S^{\Sigma}_{\sf K}$ ($\S$\ref{ext}). 
The following is a slight generalization of the result of Goldring and Koskivirta in \cite[$\S$11]{GK}.  
\begin{thm}\label{GK}
We assume the condition $(\star)$ if $(G, X)$ is neither of compact-type, nor of PEL-type:  

$(\star)$ There exists a $G(\A_f^p)$-equivariant Cartier divisor $D'$ such that $D'_{\rm{red}}=D$ and $\omega^k(-D')$ is ample on $\mathscr S_{\sf K}^{\Sigma}$ for all $k \gg 0$.

 Then each of the $\mathcal H$-modules 
\begin{align}\label{modules}
\begin{split}
& \bigoplus_{\xi \in X_{+}^*}
  H^0( S_{\sf K}, \mathscr V(\xi)), \   
\bigoplus_{\xi\in X_{+}^*} H^0( S_{\sf K}^{\Sigma}, \mathscr V^{\rm{sub}}(\xi)),
\\
& \bigoplus_{\xi \in X_{+}^*} H^0( S_{\sf K}^{\Sigma}, \mathscr V^{\rm{can}}(\xi)), \  
  \bigoplus_{\xi \in X_{+}^*} H^0(\mathcal C(x), \mathscr V(\xi))
  \end{split}
 \end{align}
 for an $x\in \mathcal N_b(k)$ 
 admits precisely the same systems of Hecke eigenvalues. 
\end{thm}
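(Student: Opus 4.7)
The plan is to split Theorem \ref{GK} into two comparisons. The first is among the three $\mathcal H$-modules built on $S_{\sf K}$ and its toroidal compactification $S_{\sf K}^{\Sigma}$; this reproduces the argument of Goldring--Koskivirta under hypothesis $(\star)$. The second is the identification of the resulting eigensystem set with the one appearing on the central leaf $\mathcal C(x)$, which is the new content of the theorem.

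For the first comparison, I would follow \cite{GK} essentially verbatim. In the compact-type and PEL-type cases, the Hodge bundle $\omega$ is ample on $\mathscr S_{\sf K}^{\Sigma}$, and one uses $G(\A_f^p)$-equivariant (hence Hecke-invariant) sections of sufficiently high powers of $\omega$ to move between $\mathscr V(\xi)$, $\mathscr V^{{\rm can}}(\xi)$, and $\mathscr V^{{\rm sub}}(\xi)$ while preserving eigensystems: twisting by such a section shifts the weight but not the eigensystem, and by ampleness every eigensystem can be so shifted into any of the three spaces. In the general Hodge-type setting, condition $(\star)$ supplies an ample divisor $\omega^k(-D')$ with $D'_{{\rm red}}=D$ that is $G(\A_f^p)$-equivariant; substituting this into the GK argument in place of $\omega^k$ yields the equality of eigensystem sets among the first three modules in \eqref{modules}, because the equivariance of $D'$ is precisely what is needed for the relevant multiplication and restriction maps to commute with the Hecke action.

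For the second comparison, the key intermediate object is the zero-dimensional Ekedahl--Oort stratum $E_0 \subset S_{\sf K}$, which is always nonempty and contained in the basic Newton stratum $\mathcal N_b$. By \cite{GK}, the systems of Hecke eigenvalues appearing on $S_{\sf K}$ coincide with those appearing on $E_0$. Since central leaves refine the Ekedahl--Oort stratification, $E_0$ decomposes as a finite disjoint union of central leaves $\mathcal C(x_1) \sqcup \cdots \sqcup \mathcal C(x_r)$ with $x_i \in \mathcal N_b(k)$, and the eigensystem set on $E_0$ is the union of the eigensystem sets on the $\mathcal C(x_i)$. Using the isomorphism $H^0(\mathcal C(x_i), \mathscr V(\xi)) \cong M^{{\rm alg}}(I_{x_i}, \Res^L_{I(p)} V_\xi, U_{x_i})$ from Proposition \ref{alg}, together with Theorem \ref{basic} which identifies each $I_{x_i}$ with the unique inner form of $G$ attached to $b$ and equips it with canonical identifications $I_{x_i, \Q_\ell} = G_{\Q_\ell}$ at all $\ell \neq p$, one then shows that the resulting $\mathcal H$-modules attached to different basic points $x_i$ admit the same Hecke eigensystems. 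Composing these identifications yields systems on $S_{\sf K}$ equal systems on $E_0$ equal systems on $\mathcal C(x)$ for every $x \in \mathcal N_b(k)$.

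The main obstacle I anticipate is in the final step: verifying rigorously that the algebraic modular form modules attached to different basic points $x_i$ produce the same set of eigensystems. Although $I_{x_i}$ over $\Q$ is determined up to isomorphism by the basic class $b$, the identification $I_{x_i}(\A_f^p) = G(\A_f^p)$ of Theorem \ref{basic} is only canonical up to inner conjugation by elements in the image of ${\sf K}^p$, so the prime-to-$p$ level subgroups $U^p_{x_i} = I_{x_i}(\A_f^p) \cap {\sf K}^p$ genuinely depend on $x_i$. Controlling this dependence through the Hecke-equivariance of the Rapoport--Zink uniformization (Theorem \ref{basic}) and of the embedding in Proposition \ref{propemb} --- enough to conclude that the dependence is absorbed by a Hecke-equivariant reindexing and hence preserves the eigensystem set --- is the technical heart of the argument. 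A secondary issue is checking that the $G(\A_f^p)$-equivariance of $D'$ in $(\star)$ interacts correctly with the definition of Hecke operators on the subcanonical extension, so that the GK comparison of step one carries over to the non-PEL Hodge-type setting.
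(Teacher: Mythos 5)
Your route through the zero-dimensional EO stratum $E_0$ is genuinely different from the paper's, and it leaves two real gaps. The paper does not compare $E_0$ with $\mathcal C(x)$; instead it abstracts the Goldring--Koskivirta argument itself, observing that the proofs in \cite[\S 11]{GK} go through verbatim for \emph{any} $G(\A_f^p)$-equivariant tower $(\mathcal Y_{\sf K})_{{\sf K}^p}$ of non-empty, zero-dimensional, smooth subschemes on which one can produce a Hecke-equivariant isomorphism $H^0(\mathcal Y_{\sf K}, \mathscr V(\xi)) \cong H^0(\mathcal Y_{\sf K}, \mathscr V(\xi) \otimes \omega^N)$. The paper then checks these three axioms directly for $\mathcal Y_{\sf K} = \mathcal C_{\sf K}(x)$: equivariance and zero-dimensionality come from Proposition \ref{Z_x} and Corollary \ref{c=z}, and the $\omega$-twist comes either from restricting the Hasse invariant $h_w$ on $\overline{S}_w$ for \emph{whichever} EO stratum $S_w$ contains $\mathcal C(x)$ (not necessarily the minimal one), or from a Ghitza-style argument using the finiteness of $I(p)$. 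That last point is precisely the extra technical content the paper supplies, and it is what you are missing.

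The first gap in your argument is logical: if $E_0 = \mathcal C(x_1) \sqcup \cdots \sqcup \mathcal C(x_r)$, then GK gives you (eigensystems on $S_{\sf K}$) = (eigensystems on $E_0$) = $\bigcup_i$ (eigensystems on $\mathcal C(x_i)$). This is a union; it does \emph{not} tell you that the eigensystem set of any individual $\mathcal C(x_i)$ equals that of $S_{\sf K}$, which is what the theorem asserts. The fix you propose---matching up the various $M^{\rm alg}(I_{x_i}, -, U_{x_i})$ using Hecke-equivariance of the uniformization---is not carried out, and you correctly flag it as the main obstacle, but without it the proof is incomplete even for $x_i \in E_0$. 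The second gap is one of scope: the theorem is stated for an arbitrary basic $x \in \mathcal N_b(k)$, and $\mathcal C(x)$ need not lie in $E_0$ at all (it lies in some EO stratum $S_w$ with $w$ possibly not minimal). Your chain of identifications never reaches such $x$. The paper sidesteps both issues at once by applying the abstracted GK machinery directly to $\mathcal C(x)$, using the Hasse invariant on $\overline{S}_w$ rather than passing through $E_0$.
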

\begin{proof} 
By the work of C.~Zhang in \cite{Zhang}, 
the special fiber $S_{\sf K}$ 
  admits a universal $G$-zip of type $\mu$. 
   It gives rise to a smooth morphism of stacks $\zeta :  S_{\sf K} \to G$-$\rm{Zip}^{\mu}$. 
  By definition, the fibers $ S_{{\sf K},w}:=\zeta^{-1}(w)$ for $w \in G$-Zip$^{\mu}$ are the \emph{Ekedahl-Oort (EO) strata} of $S_{\sf K}$. 
  There is a unique zero-dimensional EO stratum  $S_{{\sf K},e}$,  which is always non-empty \cite{Yu3}. 
 The result in \cite[\S 11]{GK} considered the systems of Hecke eigenvalues  on this stratum $S_{{\sf K},e}$. 
 We have to modify their argument as follows.  
 
  It was shown in \cite[Lem.~11.2.3]{GK} that  
  the module  $\Sym^n((\Lie (G)/\Lie (P))^{\vee}) \otimes V_{\xi}$ has a ``good filtration",  and such a filtration was used in the proof of \cite[Lem.~11.2.3]{GK}. 
On the other hand, our definition of the representations $V_{\xi}$ are different from \cite{GK} as we have remarked in $\S$\ref{autbdl}. 
We can replace a good filtration by a composition series, since a rational representation of $L$ always has a composition series and each  composition factor is isomorphic to $V_{\xi}$ (in our notation) for some $\xi \in X_{+}^*$.    
 
 Furthermore, one sees that their argument for the tower $(S_{{\sf K},e})_{{\sf K}^p}$  can be extended to any tower of non-empty closed subschemes $(\mathcal Y_{{\sf K}})_{{\sf K}^p}$ satisfying the following   assumptions:   
\begin{itemize}
\item[(1)] The tower $(\mathcal Y_{{\sf K}})_{{\sf K}^p}$ is 
 $G(\A_f^p)$-equivariant. 
 More precisely,  the right action by an element $h\in 
  G(\A_f^p)$ induces a bijection 
 $\mathcal Y_{{\sf K}} \xrightarrow{\sim}
  \mathcal Y_{h^{-1}{\sf K}h}$, and the projection $\pi_{{\sf K}_1/{\sf K}}$ for any compact open subgroup  ${\sf K}^p_1 
 \subset 
 {\sf K}^p$  induces an equality 
 $\pi^{-1}_{{\sf K}_1/{\sf K}}(\mathcal Y_{{\sf K}})=\mathcal Y_{{\sf K_1}}$.
\item[(2)] 
Each closed subscheme   $\mathcal Y_{{\sf K}}$ is smooth of dimension zero.   
  \item[(3)] \cite[Lem.~11.2.5]{GK} 
  There is an 
  isomorphism of $\mathcal H$-modules 
  \[H^0(\mathcal Y_{{\sf K}}, \mathscr V(\xi)) \xrightarrow{\sim}
   H^0(\mathcal Y_{{\sf K}}, \mathscr
   V(\xi) \otimes \omega^{N})\] 
   for some integer $N\geq 1$. 
  \end{itemize}
We recall here how these assumptions are used in  
\cite[Lemmas  11.2.4, 2.5,  2.6]{GK}. 
Assumption  (1) implies that the ideal sheaf $\mathcal I=\mathcal I_{{\sf K}}$ of $\mathcal Y_{\sf K}$ in $S_{\sf K}$, and also 
  the  restrictions   of 
$\mathscr V(\xi)$, $\Omega_{S_{\sf K}}^1$, and $\omega$ to $\mathcal Y_{\sf K}$ are $G(\mathbb A_f^p)$-equivariant \cite[$\S$4.1.10]{GK}. 
This further implies that their  cohomology groups are equipped with structures of 
$\mathcal H$-modules. 
Assumption (2) implies that $\mathcal I^j/\mathcal I^{j+1}=\Sym^j(\mathcal I/\mathcal I^2)$ for $j \in \Z_{\geq 0}$, 
and the conormal exact sequence of differentials  simplifies to an isomorphism $\mathcal I/\mathcal I^2 \cong \Omega^1_{S_{\sf K}} \vert_{\mathcal Y_{\sf K}}$ on $\mathcal Y_{\sf K}$. 
Assumption (3) is used crucially in the proof of \cite[Lemma 2.6]{GK}. 
Here we recall that $\mathcal Y_{{\sf K}}$ does not meet the boundary $D$ by definition, and this implies  $\omega^k(-D') \vert_{\mathcal Y_{{\sf K}}}=\omega^k \vert_{\mathcal Y_{{\sf K}}}$. 
We also note that the argument given in \cite[Proposition 24]{Ghitza} and repeated in \cite[Proposition 5.17]{Reduzzi} seems to contain a mistake related to compactifications, which was pointed out in \cite[Remark 11.2.1]{GK} and corrected in \cite[Lemma 2.6]{GK}.

 Hence it suffices to show that the tower of central leaves $(\mathcal C_{{\sf K}}(x))_{{\sf K}^p}$ satisfies these assumptions. 
Since $x$ is basic, one has 
$I(\A_f^p)=G(\A_f^p)$ and  
 $\mathcal C_{\sf K}(x)=\mathcal Z_{\sf K}(x)$  
  by Theorem \ref{basic} and Corollary \ref{c=z}. 
  Therefore, assumptions (1) and (2)  follow from Proposition \ref{Z_x}. 
  
Now we give two proofs of assumption  (3) for $\mathcal C(x)=\mathcal C_{{\sf K}}(x)$. 
The first one uses the \emph{Hasse invariants} on  EO strata. 
Recall that the structure of  central leaves is finer than the structure of the EO strata. 
In particular,  $\mathcal C(x)$ is contained in the EO stratum $S_w=S_{{\sf K},w}$ for some $w \in G$-${\rm Zip}^{\mu}$.  
  Let $\overline{S}_{w}$ be the Zariski closure of $S_{w}$ in $S_{\sf K}$. 
  By \cite[Cor. 4.3.5]{GK}, there exist a positive integer $N$ and a section $h_w \in H^0(\overline{S}_w, \omega^N \lvert_{\overline{S}_w})$ which is $G(\A_f^p)$-equivariant and whose non-vanishing locus is precisely $S_w$.  
  Then its restriction $h_w \vert_{\mathcal C(x)}$ to $\mathcal C(x)$ is $G(\A_f^p)$-equivariant, and does not vanish anywhere on $\mathcal C(x)$. 
  Therefore, the multiplication by $h_w \vert_{\mathcal C(x)}$  induces the desired isomorphism. 
  
  The second proof is similar to Ghitza's \cite{Ghitza}. 
 By \cite[$\S$4.1.11]{GK},  there is a  structure $k_{\omega}$ on $k$ of a  one dimensional  rational representation of $L$  such that $\omega \cong \mathscr V(k_{\omega})$ as $G(\A_f^p)$-equivariant line bundles. 
  Recall that we regard $I(p)$ as a subgroup of $L$. 
  Since $I(p)$ is finite,  there exists an embedding of a  finite subfield $\F_q \hookrightarrow k_{\omega}$ such that the action of  $I(p)$ on $k_{\omega}$ factors through $\F_q^{\times}$.  
  Therefore, by (\ref{cl}) one has identifications 
  \begin{align*}
 H^0(\mathcal C(x), \mathscr V(\xi)) & \cong M^{\rm alg}(I, \Res^L_{I(p)} V_{\xi}, U)
  \\
  & =  M^{\rm alg}(I, \Res^L_{I(p)}  
  (V_{\xi}
  \otimes 
  k_{\omega}^{\otimes q-1}), 
  U)
   \cong 
  H^0(\mathcal C(x), 
  \mathscr V(\xi)
   \otimes \omega^{q-1}
   ).
  \end{align*}
\end{proof}  
\subsection{Main theorem}
We write  ${\rm Irr}(I(p))$ for the set of isomorphism classes of simple $k[I(p)]$-modules.   
 We use the following lemma. 
 \begin{lemma}\label{lift}
For each $V_{\tau} \in {\rm Irr}(I(p))$, 
there exists a rational representation $V$ of $L$ over $k$ such that the restriction $\Res_{I(p)}^{L} V$  contains  $V_{\tau}$ as a  $k[I(p)]$-submodule. 
\end{lemma}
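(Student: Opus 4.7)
The plan is to reduce, by dualizing, to a quotient version of the statement: for every simple $k[I(p)]$-module $M$ there exists a finite-dimensional rational $L$-representation $W$ such that $\Res^{L}_{I(p)} W$ admits $M$ as a quotient. Applying this with $M = V_\tau^*$ (which is again simple, since the contragredient of a simple module is simple) and setting $V := W^*$ yields $V_\tau \embed \Res^{L}_{I(p)} V$, as required.

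To establish the quotient version, I would work with the coordinate ring $\mathcal O(L)$ of the affine algebraic group $L$, viewed as a rational $L$-representation via right translation. Because $I(p)$ is a finite set of distinct $k$-points of $L$, a Chinese remainder argument shows that the restriction homomorphism
\[
\mathcal O(L)\twoheadrightarrow \mathcal O(I(p))
\]
is surjective and $I(p)$-equivariant. For any finite group $H$, there is a canonical $k[H]$-module identification $\mathcal O(H)\cong k[H]$ via $\delta_h \mapsto h$, namely the regular representation. Since every simple module over any algebra is a quotient of its regular module, composing with the surjection $k[I(p)]\twoheadrightarrow V_\tau^*$ produces an $I(p)$-equivariant surjection $\pi \colon \mathcal O(L)\twoheadrightarrow V_\tau^*$.

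The key remaining step is to replace $\mathcal O(L)$ by a finite-dimensional rational $L$-subrepresentation without losing this surjection. Here I would invoke the standard local finiteness of rational $L$-modules: every element of $\mathcal O(L)$ is contained in a finite-dimensional $L$-stable subspace, a basic property of comodules over the Hopf algebra $\mathcal O(L)$. Choosing $f_1,\dots,f_r \in \mathcal O(L)$ whose images $\pi(f_i)$ span the finite-dimensional space $V_\tau^*$ and letting $W \subset \mathcal O(L)$ be the $L$-subrepresentation they generate gives a finite-dimensional rational $L$-representation with $\pi\vert_W$ still surjective onto $V_\tau^*$. Dualizing then produces the desired $V = W^*$. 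The only bookkeeping required is to verify the compatibility of the $I(p)$-action on $\mathcal O(L)\vert_{I(p)}$ coming from right translation on $L$ with the $I(p)$-action used to define algebraic modular forms via the embedding $I(p) \hookrightarrow L_x \cong L$ from $\S$\ref{unifbdl}; both descend from the same subgroup inclusion, so this is routine rather than a genuine obstacle.
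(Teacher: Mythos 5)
Your proof is correct, but it takes a genuinely different route from the paper. The paper proceeds by embedding $I(p)$ into $\GL(\Lambda^*_{\F_q})$ for a suitable finite subfield $\F_q \subset k$, inducing $V_\tau$ up to $\GL(\Lambda^*_{\F_q})$ and passing to a simple quotient $V_\pi$, applying Frobenius reciprocity to recover $V_\tau$ inside $\Res V_\pi$, and then invoking Ghitza's lifting theorem \cite[Prop.~26]{Ghitza} to promote the simple $k[\GL(\Lambda^*_{\F_q})]$-module $V_\pi$ to an irreducible rational representation of $\GL(\Lambda^*_k)$ before restricting to $L$. Your argument bypasses the auxiliary finite form, the induction, Frobenius reciprocity, and the lifting theorem entirely: after dualizing to convert the sub-object problem into a quotient problem, you exploit the $I(p)$-equivariant surjection $\mathcal O(L) \twoheadrightarrow \mathcal O(I(p)) \cong k[I(p)]$ (restriction to a finite set of closed $k$-points), the fact that every simple module is a quotient of the regular module, and local finiteness of $\mathcal O(L)$ as a rational $L$-module to cut down to a finite-dimensional $W$. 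This is a cleaner and more self-contained proof; it does not depend on $L$ being realized inside a $\GL_m$ with a compatible model over a finite field, and it avoids citing \cite{Ghitza}. The trade-off is minor: the paper's construction produces an \emph{irreducible} rational $\GL_m$-representation, whereas your $W$ (and its dual $V$) need not be irreducible as an $L$-representation; but irreducibility is not required by the statement of the lemma or by the way it is used in Theorem~\ref{Hevalg}, where one passes to composition factors anyway. One small imprecision worth flagging: the canonical $k[H]$-equivariant identification of $\mathcal O(H)$ (right translation) with the left regular representation is $\delta_h \mapsto h^{-1}$ rather than $\delta_h \mapsto h$, though since the left and right regular representations of a finite group are isomorphic this does not affect the argument.
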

\begin{proof}
Recall that one has an embedding $I(p)\subset L_x  \cong L \subset \GL(\Lambda^*_k)$. 
 Since $I(p)$ is finite,  there is a finite subfield $\F_q$ of $k$ such that 
 $I(p) \subset  \GL(\Lambda^*_{\F_q})$. 
 
 Let  $V_{\pi}$ be the quotient of the induced 
 module $\Ind_{I(p)}^{\GL(\Lambda^*_{\F_q})}V_{\tau}$ by a  maximal proper $k[\GL(\Lambda_{\F_q}^*)]$- submodule. 
  Then $V_{\pi}$ is a simple $k[\GL(\Lambda^*_{\F_q})]$-module. 
  By  Frobenius reciprocity, 
  the restriction 
  $\Res_{I(p)}^{\GL(\Lambda^*_{\F_q})} V_{\pi}$ contains $V_{\tau}$ as a $k[I(p)]$-submodule. 
 Further, by \cite[Prop.~26]{Ghitza}, there is a structure of (irreducible) rational  representation  of $\GL(\Lambda^*_k)$ on $V_{{\pi}}$ which lifts the $k[\GL(\Lambda^*_{\F_q})]$-module structure of $V_{\pi}$. 
 If we write $V_{\overline{\pi}}$ for this representation, 
 then the restriction 
 $V:=\Res_{L}^{\GL(\Lambda_k^*)} 
 V_{\overline{\pi}}$ satisfies the desired property since 
\[\Res_{I(p)}^{L} V=
\Res_{I(p)}^{\GL(\Lambda^*_k)} V_{\overline{\pi}}=\Res_{I(p)}^{\GL(\Lambda^*_{\F_q})} V_{\pi}\supset V_{\tau} . \] 
\end{proof} 
\begin{thm}\label{Hevalg}
The systems of Hecke eigenvalues which appear in the space
\[\bigoplus_{\xi\in X_{+}^*}H^0(\mathcal C(x), \mathscr V(\xi))\]
 for an $x\in \mathcal N_b(k)$ 
  are the same as those which appear in  the space 
  \[
  \bigoplus_{V_{\tau} \in {\rm Irr}(I(p))}
  M^{{\rm alg}}(I, V_{\tau}, U)\]
  of algebraic modular forms$\pmod p$ 
 for $I$ having level $U=U_p{{\sf K}}^{p}$. 
\end{thm}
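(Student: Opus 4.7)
The proof will hinge on the $\mathcal H$-equivariant isomorphism
\[
H^0(\mathcal C(x), \mathscr V(V)) \cong M^{\rm alg}\bigl(I, \Res^{L}_{I(p)}V, U\bigr)
\]
obtained by combining (\ref{cl}) with Corollary \ref{c=z} (which identifies $\mathcal C(x)$ with $\mathcal Z(x)$ in the basic case), together with the exactness of $M^{\rm alg}(I,-,U)$ from Proposition \ref{propGross}(2) and the lifting Lemma \ref{lift}. The strategy is to pass through this dictionary and use filtrations, invoking the routine principle that, if a simultaneous system of eigenvalues $(b_T)_{T\in \mathcal H}$ appears in a finite-dimensional $\mathcal H$-module $M$ equipped with an $\mathcal H$-stable filtration $0=M_0\subset M_1\subset\cdots\subset M_r=M$, then it appears in some graded piece $M_i/M_{i-1}$: pick an eigenform $f$ and let $i$ be minimal with $f\in M_i$; the nonzero image of $f$ in $M_i/M_{i-1}$ is still an eigenform with the same system $(b_T)$.

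For the forward direction, suppose $(b_T)$ appears in $H^0(\mathcal C(x), \mathscr V(\xi))$ for some $\xi\in X_+^*$. Transporting through the isomorphism above, it appears in $M^{\rm alg}(I, \Res^L_{I(p)} V_\xi, U)$. I would then pick a composition series of $\Res^L_{I(p)} V_\xi$ as a $k[I(p)]$-module, whose simple subquotients lie in $\Irr(I(p))$; applying the exact functor $M^{\rm alg}(I,-,U)$ produces an $\mathcal H$-stable filtration (the $\mathcal H$-equivariance of the functor stems from the fact that Hecke operators act by right translations via $G(\A_f^p)\supset I(\A_f^p)$, commuting with the $I(p)$-action at $p$). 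The filtration principle then places $(b_T)$ in some $M^{\rm alg}(I, V_\tau, U)$ with $V_\tau \in \Irr(I(p))$.

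For the reverse direction, suppose $(b_T)$ appears in $M^{\rm alg}(I, V_\tau, U)$ with $V_\tau\in \Irr(I(p))$. By Lemma \ref{lift} there is a rational $L$-representation $V$ over $k$ with $V_\tau\hookrightarrow \Res^L_{I(p)} V$ as $k[I(p)]$-modules; the exactness of $M^{\rm alg}(I,-,U)$ yields an $\mathcal H$-equivariant injection into $M^{\rm alg}(I, \Res^L_{I(p)} V, U)\cong H^0(\mathcal C(x), \mathscr V(V))$, so $(b_T)$ appears there. Taking a composition series of $V$ as an $L$-representation with simple quotients $V_{\xi_j}$ for $\xi_j\in X_+^*$, and using that the functor $H^0(\mathcal C(x), \mathscr V(-))$ on rational $L$-representations is itself exact (either directly because $\mathcal C(x)$ is zero-dimensional with $\mathcal L|_{\mathcal C(x)}$ a trivial $L$-torsor, or by transporting exactness of $M^{\rm alg}$ through (\ref{cl})), the same filtration principle deposits $(b_T)$ in some $H^0(\mathcal C(x), \mathscr V(\xi_j))$.

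I anticipate no substantial obstacle: the key inputs---Proposition \ref{propGross}(2), Lemma \ref{lift}, and the $\mathcal H$-equivariant isomorphism (\ref{cl})---have already been established in the preceding sections, and the filtration-and-graded-piece principle is purely formal linear algebra. The only point demanding a bit of care is verifying that every exact sequence and isomorphism in use is genuinely $\mathcal H$-equivariant, which reduces to the orthogonality between the $p$-component (where $I(p)$ and $L$ act) and the prime-to-$p$ component (where the unramified Hecke algebra $\mathcal H$ acts by translations).
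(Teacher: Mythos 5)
Your proposal is correct and follows essentially the same route as the paper's proof: transport eigenforms through the $\mathcal H$-equivariant isomorphism~(\ref{cl}), use Lemma~\ref{lift} for the reverse direction, and invoke exactness of $M^{\rm alg}(I,-,U)$ (Proposition~\ref{propGross}) together with the filtration/graded-piece principle on composition series to land on an irreducible weight. The only cosmetic difference is that you make the filtration principle explicit and phrase the second direction in terms of exactness of $H^0(\mathcal C(x),\mathscr V(-))$, whereas the paper applies exactness of $\Res^L_{I(p)}$ followed by $M^{\rm alg}(I,-,U)$ and only converts to cohomology at the last step via~(\ref{cl}); these are the same thing.
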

\begin{proof}
First let $\xi \in X_{+}^{*}$ and suppose that  $f \in H^0(\mathcal C(x), \mathscr V(\xi))$ is a Hecke eigenform and $(b_T)$ is the system of 
 Hecke eigenvalues of $f$. 
 Under the isomorphism (\ref{cl}), we 
can regard $f$ as a Hecke eigenform in the space $M^{{\rm alg}}(I, \Res_{I(p)}^{L} V_{\xi}, U)$. 
Recall that the functor $M^{\rm alg}(I, *, U)$ on finite $k[I(p)]$-modules $*$  is exact by Proposition \ref{propGross}. 
Therefore, for any composition series 
 of   
$\Res_{I(p)}^{L} V_{\xi}$, 
  one can find a composition factor $V_{\tau}$   
  such that $(b_T)$ appears as a system of Hecke eigenvalues in  $M^{{\rm alg}}(I, V_{\tau},U)$. 
 
 On the other hand, 
 let $V_{\tau}$ be a  simple $k[I(p)]$-module. 
 Then by Lemma \ref{lift}, there is a  rational representation $V$ of  $L$  such that the restriction $\Res_{I(p)}^{L} V$ has $V_{\tau}$ as an  $k[I(p)]$-submodule. 
Therefore one has a $\mathcal H$-equivariant inclusion 
\[ M^{{\rm alg}}
(I, V_{\tau}, U) \subset M^{{\rm alg}}
(I, \Res_{I(p)}^{L} V, U).\]
Note that the functor $\Res^{L}_{I(p)} (*)$ on rational representations of $L$ is exact. 
Therefore, for any system $(b_T)$ of Hecke eigenvalues arising  from $M^{{\rm alg}}
(I, V_{\tau}, U)$, there exists a composition factor $V'$ of $V$ such that $(b_T)$ appears also in $M^{{\rm alg}}
(I, \Res_{I(p)}^{L}V', U)$. 
By (\ref{cl}),  
 the system $(b_T)$ also appears in the space 
 $H^0(\mathcal C(x), 
 \mathscr V(\xi))$ 
 where $\xi \in X_{+}^*$ is   the dominant weight satisfying
  $V_{\xi} \cong V'$. 
 \end{proof}
 Theorems \ref{GK} and \ref{Hevalg} imply Theorem \ref{intro}. 
\begin{cor}\label{number}
Under the assumption of Theorem \ref{intro}, 
the  number of distinct systems of Hecke eigenvalues  appearing in the spaces (\ref{mf}) 
 is no larger than  
\[\vert \mathcal C(x) \vert \cdot \sum_ {V_{\tau} \in {\rm Irr} (I(p))} \dim_k V_{\tau}.\]
\end{cor}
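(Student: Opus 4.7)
The plan is to reduce the counting problem to a dimension count on the space of algebraic modular forms, for which Proposition \ref{propGross} provides an explicit formula. By Theorem \ref{intro} (which combines Theorems \ref{GK} and \ref{Hevalg}), every system of prime-to-$p$ Hecke eigenvalues appearing in any of the spaces in (\ref{mf}) already appears in
\[
\bigoplus_{V_{\tau} \in \mathrm{Irr}(I(p))} M^{\mathrm{alg}}(I, V_{\tau}, U),
\]
so it suffices to bound the number of distinct systems on this direct sum.

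The standard linear-algebra observation is that if $(b_T^{(1)}), \ldots, (b_T^{(n)})$ are pairwise distinct systems of eigenvalues of a commuting family $\mathcal H$ acting on a finite-dimensional $k$-vector space $M$, then one can choose eigenvectors $f_i \in M$ (with $Tf_i = b_T^{(i)}f_i$ for all $T\in \mathcal H$) that are linearly independent: any nontrivial relation can be annihilated by an element of $\mathcal H$ separating one eigensystem from the others and iterated. Consequently, the number of distinct eigensystems on $M$ is at most $\dim_k M$. Applying this with $M = \bigoplus_{V_{\tau}} M^{\mathrm{alg}}(I, V_{\tau}, U)$ gives the bound
\[
\#\{\text{eigensystems}\} \;\leq\; \sum_{V_{\tau} \in \mathrm{Irr}(I(p))} \dim_k M^{\mathrm{alg}}(I, V_{\tau}, U).
\]

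To finish, I invoke the dimension formula in Proposition \ref{propGross}(1), which gives
\[
\dim_k M^{\mathrm{alg}}(I, V_{\tau}, U) \;=\; |\mathcal Z(x)|\cdot \dim_k V_{\tau}.
\]
Since $x$ lies on the basic Newton stratum $\mathcal N_b$ by hypothesis, Corollary \ref{c=z} yields $\mathcal Z(x)=\mathcal C(x)$, so $|\mathcal Z(x)| = |\mathcal C(x)|$. Summing over $V_{\tau} \in \mathrm{Irr}(I(p))$ and factoring $|\mathcal C(x)|$ out of the sum gives exactly the asserted upper bound. No step here is technically difficult; the only subtlety is making sure the linear-independence argument for distinct Hecke eigensystems is cleanly justified (for a commutative algebra this is elementary), and that the hypothesis of Theorem \ref{intro} transfers the count from automorphic forms to the algebraic modular forms side.
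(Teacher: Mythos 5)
Your proof is correct and follows essentially the same route as the paper's: invoke Theorem \ref{intro} to pass to algebraic modular forms, bound the number of eigensystems by the dimension of that space, and conclude via the dimension formula $\dim_k M^{\mathrm{alg}}(I,V_\tau,U)=|\mathcal Z(x)|\cdot\dim_k V_\tau$ together with $\mathcal Z(x)=\mathcal C(x)$. You spell out the linear-independence step and the use of Corollary~\ref{c=z} explicitly, both of which the paper leaves implicit, but the argument is the same.
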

\begin{proof} By
Theorem \ref{intro}, that  number is no larger than 
the dimension of the direct sum $\bigoplus_{V_{\tau} \in {\rm Irr}(I(p))}
  M^{{\rm alg}}(I, V_{\tau}, U)$ over $k$. 
Hence the assertion follows from  formula  (\ref{dimalg}).  
\end{proof}

 \section{Upper bound of the number of Hecke eigensystems of   totally indefinite quaternionic Shimura varieties} 
 Let $p$ is a prime number, possibly $p=2$. 
 In this section we apply Corollary \ref{number} to totally indefinite quaternionic  Shimura varieties  (type C), and give an explicit upper bound of the number of Hecke eigensystems appearing in the spaces of automorphic forms (mod $p$). 
\subsection{Hecke eigensystems of totally indefinite quaternionic Shimura varieties}
Let $\scrD$ be a tuple $(B,*,O_B, V,\psi, \Lambda, h_0)$, where
\begin{itemize}
\item 
  $B$ is a totally indefinite quaternion algebra over a totally real field $F$ of degree $d$ over $\mathbb Q$;
  \item $*$ is  a positive involution on $B$;
  \item  
 $O_B$ is a maximal order stable under the involution $*$; 
 \item $V$ is a finite faithful left $B$-module;
\item $\psi:V\times V\to \Q$ is a non-degenerate alternating form such that
  $\psi(bx,y)=\psi(x,b^*y)$ for all $x,y\in V$ and $b\in B$; 
  \item $\Lambda$ is a full $O_B$-lattice in $V$; 
\item $h_0:\C \to \End_{B\otimes \R}(V_\R)$ is an $\R$-linear algebra
  homomorphism, % where $V_\R:=V\otimes_\Q \R$,  
  such that 
\[ \phi(h_0(i)x, h_0(i)y)=\psi(x,y), \quad \forall\, x,y \in
 V_\R=V\otimes\R, \]
%for any $x, y\in V_\R=V\otimes\R$, one has 
%  $\phi(h_0(i)x, h_0(i)y)=\psi(x,y)$ and 
and the symmetric form  $(x,y)=\psi(x,h_0(i)y)$ is
  definite (positive or negative) on $V_\R$.   
 \end{itemize}
 In addition, for any prime number  $\ell$ we assume that 
  \begin{itemize}
     \item[(a)]  $O_{B}\otimes_{\Z} \Z_{\ell}$
     is a  
       maximal order in $B\otimes_\Q \Q_{\ell}$;
     \item[(b)] $\Lambda\otimes_\Z \Z_{\ell}$ is self-dual with
       respect to the pairing $\psi$.   
    \end{itemize}
    Let $G=\GU_B(V,\psi)$ be the $\Q$-group of $B$-linear
$\psi$-similitudes on $V$. 
The
group of its $R$-valued points for a  commutative $\Q$-algebra $R$ 
 is defined by 
\begin{equation}
  \label{eq:P.8}
  G(R)=\{g\in \End_B(V_R)\, |\, \exists\, c(g)\in R^\times
  \ \text{s.t.}\ \psi(gx,gy)=c(g)\psi(x,y), \ \forall\, x,y\in
  V_R \}
\end{equation}
where $V_R=V\otimes_\Q R$. 
 Further, $O_B$ and $\Lambda$ give a model $G_{\mathbb Z}=\GU_{O_B}(\Lambda, \psi)$ over $\mathbb Z$. 
 We fix an integer $N\geq 3$ with $p \nmid N$,  and  
 we define a compact open subgroup  ${\sf K}^p(N) \subset G(\mathbb A_f^p)$ by
 \[{\sf K}^p(N)=\ker \Big(G_{\mathbb Z}(\widehat{\Z}^p) \to 
 G_{\mathbb Z}(\widehat{\Z}^p/N\widehat{\Z}^p)\Big).\]
 We set ${\sf K}_p=G_{\Z}(\Z_p)$ and ${\sf K}={\sf K}_p \cdot {\sf K}^p(N)\subset G(\mathbb A_f)$.  
 
 Let $m=\dim_B V$, and let $\mathbf M_{\sf K}=\mathbf M_{\sf K}(\scrD)$ be the moduli problem for classifying $O_B$-abelian  schemes of dimension $2dm$  up to prime-to-$p$ isogeny with  $\Z_{(p)}^{\times}$-polarizations, level ${\sf K}^p(N)$ structure, and  the determinant condition. 
By \cite{Kottwitz,RZ}, 
 this functor $\mathbf M_{\sf K}$ is represented by a quasi-projective scheme (denoted again by) $\mathbf M_{\sf K}$ over $\Z_{(p)}$. 
 Further,  assumptions (a) and (b) for all primes $\ell$ imply that  $\mathbf M_{\sf K}$ is identified with   the moduli space classifying abelian schemes of dimension $2dm$ with {\it principal} polarizations, $O_B$-multiplications, and level ${\sf K}^p(N)$  structure, where two objects are said to be  isomorphic if there exists an isomorphism of abelian schemes compatible with the additional structures (cf. \cite[Prop.~1.4.3.4]{Lan}).        
 
 We write $\mathbf M_{{\sf K}}^{\rm sp}$ for the subset of $\mathbf M_{{\sf K}}(k)$ consisting of  points whose underlying abelian varieties are superspecial. 
Then $\mathbf M_{{\sf K}}^{{\rm sp}}$ is non-empty.  
We can reduce this problem to   constructing a Dieudonn\'{e} module with certain additional structures (cf. \cite{xue-yu:ppas, 2003Fourier, slope, yu:D}).  We omit details.\footnote{The reader is referred to an earlier version \cite{TY} for the proof.}

From now on, we assume $p$ is unramified in $B$.
Then the moduli space $\mathbf M_{{\sf K}}$ has good reduction at $p$ and can be regarded as the canonical integral model of a Shimura variety of Hodge type with a hyperspecial level ${\sf K}_p$ at $p$. 
We choose a base point $x\in \mathbf M_{\sf K}^{{\rm sp}}$, 
and consider  
   its associated inner form $I$ of $G$  as in $\S$\ref{sunif}.  
 
Let $\xi \in X_{+}^*$ and $\mathscr V(\xi)$ be the automorphic bundle of weight $\xi$. 
Let $\Sigma$ be a finite, admissible rpcd ($\S$\ref{ext}). 
Let $\mathbf M_{\sf K}^{\Sigma}$ be the toroidal compactification of $\mathbf M_{\sf K}$ with respect to $\Sigma$, constructed by Lan \cite{Lan}. 
We remark that Lan does not excluded the case $p=2$ when $\mathbf M_{{\sf K}}$ is of  type C. 
Also, the proofs in previous sections  work for $p=2$ with the following modifications: 
For the  Rapoport-Zink uniformization, 
 we can apply the original work \cite[Chap.~6]{RZ} to the variety $\mathbf M_{{\sf K}}$. 
 Especially, Theorem \ref{basic} can be  replaced by \cite[Thm.~6.30]{RZ}. 
 Note that for general Hodge type case, Kisin \cite{Kisin2} and Howard-Pappas \cite{HP} excluded the case $p=2$. 
The proof of Theorem \ref{GK} works also  for $p=2$ if we choose the second proof of assumption  (3).  
(However,  
we do not know whether the construction of Hasse  invariants given by  Goldring and Koskivirta in \cite[$\S$4]{GK} can be extended to the case $p=2$. 
See also \cite[Rem.~11.2.7]{GK}.) 
Hence Theorem \ref{intro} holds for $\mathbf M_{{\sf K}}$ for any prime $p$, i.e. the systems of Hecke eigenvalues which appear  in  each of  the $\mathcal H$-modules 
\[\bigoplus_{\xi \in X_{+,L}^*(T) 
}H^0(\mathbf M_{{\sf K}} \otimes k, 
\mathscr V(\xi)), \bigoplus_{\xi \in X_{+,L}^*(T) 
}H^0(\mathbf M_{{\sf K}}^{\Sigma} \otimes k, 
\mathscr V^{\bullet}(\xi))\]
 for $\bullet \in \{{\rm can, sub}\}$  are the same  as those  which appear in  the space of algebraic modular forms on $I$  of level $U=U_p\cdot{\sf K}^p(N)$ and of varying  weight $V_{\tau} \in {\rm Irr}(I(p))$. 
 \begin{remark}
When $F=\Q$, $B=\Mat_2(\Q)$, and $m=\dim_B V=1$, then the Morita  equivalence implies that $\mathbf M_{{\sf K}}$ parameterizes abelian varieties of dimension $dm=1$, i.e.  $\mathbf M_{{\sf K}}$ is the modular curve.  
In particular, the boundary has codimension one. 
Otherwise $\mathbf M_{{\sf K}}$ is compact or has the  boundary with codimension  larger than one, and hence  
the Koecher principle holds for $\mathbf M_{{\sf K}}$ by \cite[Thm.~2.3]{Lan3}:
\[H^0(\mathbf M_{\sf K}^{\Sigma} \otimes k, \mathscr{V}(\xi)^{\rm{can}}) \cong 
H^0(\mathbf M_{\sf K} \otimes k, \mathscr{V}(\xi)).\] 
\end{remark}
   By \cite[Lem.~4.2 (1)]{Yu2}, the central leaf $\mathcal C(x)$  for any point $x \in \mathbf M_{{\sf K}}^{{\rm sp}}$ coincides with 
 $\mathbf M_{{\sf K}}^{{\rm sp}}$.    
 Note that its cardinality 
 $\lvert \mathcal C(x) \rvert=
 \lvert \mathbf M_{{\sf K}}^{{\rm sp}}\rvert$ is the smallest among all central leaves. 
 
   We write $\mathscr N(B,N,p)$ for the number of the above distinct systems of Hecke eigenvalues.
Then  Corollary \ref{number} implies an inequality 
 \begin{align}\label{upp}
 \mathscr N(B,N,p) \leq \vert \mathbf M_{{\sf K}}^{{\rm sp}} \vert \cdot \sum_ {V_{\tau} \in {\rm Irr} (I(p))} \dim_k V_{\tau}.
 \end{align} 
 %For any $\underline{A}$ and any prime $\ell \neq p$, we write $T_{\ell}(\underline{A})$ for the associated Tate module with  additional structures $(T_{\ell}(A), \lambda_{\ell}, \iota_{\ell})$, 
%where $\lambda_{\ell}$ is the induced quasi-polarization from $T_{\ell}(A) \to T^t_{\ell}(A)$? and $\iota_{\ell}:O_B \otimes \mathbb Z_{\ell} \to \End(T_{\ell}(A))$ the induced ring monomorphism. 

  %the set of isomorphisms 
 % \ (\rm{resp.} \ \GIsom_k(\underline{A}_1, \underline{A}_2))$ the set of $O_B$-linear  quasi-isogenies (resp. isomorphisms) $\phi : \underline{A}_1 \to \underline{A}_2$ over $k$ such that $\phi^*\overline{\lambda}_2=\overline{\lambda}_1$ as sets of isogenies from $A_1$ to $A_1^t$. 
 
% For any prime $\ell \neq p$, denote by 
% $\GIsog_k(T_{\ell}(\underline{A}_1), T_{\ell}(\underline{A}_2))$
% (resp. $\GIsom(T_{\ell}(\underline{A}_1), T_{\ell}(\underline{A}_2)))$ the set of 
% $O_B\otimes \mathbb Z_{\ell}$-linear  quasi-isogenies (resp. isomorphisms) 
% $\phi : T_{\ell}\underline{A}_1 \to T_{\ell}\underline{A}_2$ over $k$ such that $\phi^*\lambda_2=q\lambda_1$ for some $q$?

%\begin{lemma}\label{16}
  %Any two isogenies $\phi, \tilde{\phi}: (A_0, \lambda_0, i_0) \to (A, \lambda, i)$ are related by $\tilde{\phi}=\phi\circ u$, where $u \in G'(\mathbb Q)$. 
  %\end{lemma}
\subsection{Cardinality of the superspecial locus}\label{card}
Let $D_{p, \infty}$ be the quaternion algebra over $\mathbb Q$ ramified exactly at $\{p, \infty\}$. 
Let $D'$ be the quaternion algebra over $F$ such that 
\[\inv_v(D')=\inv_v((D_{p, \infty} \otimes _{\Q}F)\otimes _F B)\]
 for any place $v$ of $F$. 
Let $\Delta'$ be the discriminant of $D'$ over $F$. 
\begin{thm}
If we write $\zeta_F(s)$ for  the Dedekind zeta function, then we have 
\begin{align}\label{Mass}
\begin{split}
\lvert \mathbf M_{{\sf K}}^{{\rm sp}} \rvert= & \lvert G(\mathbb Z/N\mathbb Z)\rvert  \cdot\frac{(-1)^{dm(m+1)/2}}{2^{md}}
\\ 
 & \cdot \prod_{j=1}^{m}
\left[ \zeta_F(1-2j)\prod_{v \mid \Delta'}(q_v^j+(-1)^j) \cdot \prod_{v \mid p, v \nmid \Delta'}(q_v^j+1)\right].
\end{split}
\end{align}
\end{thm}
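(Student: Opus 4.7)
The plan is to combine the Rapoport-Zink uniformization with a classical mass formula for the inner form attached to a superspecial point. By the result of C.-F. Yu cited above, the superspecial locus $\mathbf M_{{\sf K}}^{{\rm sp}}$ coincides with the central leaf $\mathcal C(x)$ through any $x\in \mathbf M_{{\sf K}}^{{\rm sp}}$, and in particular lies in the basic Newton stratum. Theorem \ref{basic} and Corollary \ref{c=z} then give a canonical bijection
\[
I(\mathbb Q) \backslash I(\mathbb A_f) / \bigl( I(\mathbb Z_p) \cdot U^p \bigr) \xrightarrow{\sim} \mathbf M_{{\sf K}}^{{\rm sp}},
\]
where $U^p = I(\mathbb A_f^p)\cap {\sf K}^p(N)$ and $I$ is the inner form of $G$ attached to $x$. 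Since $N\geq 3$, the prime-to-$p$ level $U^p$ is torsion-free, so all the $I(\mathbb Q)$-stabilizers are trivial and the cardinality $|\mathbf M_{{\sf K}}^{{\rm sp}}|$ equals the arithmetic mass $\Mass(I, I(\mathbb Z_p)\cdot U^p)$. As the kernel $U^p\subset I(\widehat{\mathbb Z}^p)$ has index $|G(\mathbb Z/N\mathbb Z)|$, we may pull out this factor and reduce to computing $\Mass(I, U_{\max})$ for $U_{\max}$ a maximal open compact of $I(\mathbb A_f)$.

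Next I would identify $I$ explicitly. At a superspecial point the $p$-divisible group is a product of supersingular ones, whose endomorphism algebra is $D_{p,\infty}$; tensoring with the $O_B$-action, the commutant of $B$ in $\End^0(\mathcal A_x)$ becomes a central simple $F$-algebra Morita-equivalent to $D'$, so that $I \cong \GU_{D'}(V',\psi')$ is the quaternionic unitary similitude group of rank $m$ over $F$ associated with $D'$. The local isomorphism $I_{\mathbb Q_\ell}\cong G_{\mathbb Q_\ell}$ for $\ell\neq p$ (Theorem \ref{basic}) is consistent with $\inv_v(D')=\inv_v(B)$ at finite places $v\nmid p$, while at $p$-adic places $v\mid p$ one sees directly from the defining formula that $D'_v$ is precisely the quaternion algebra over $F_v$ into which $B_v$ twists by $D_{p,\infty}\otimes F_v$. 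At archimedean places, $D'$ is ramified everywhere (giving the compactness of $I_\infty$), which accounts for the factor $2^{-md}$ and the sign $(-1)^{dm(m+1)/2}$ through the functional equations of $\zeta_F$.

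To finish, I would apply the Gan-Hanke-Yu/Shimura/Prasad mass formula for $I$. Writing the Tamagawa volume as a product of local volumes with respect to the chosen compact, the contribution of the (cofinitely many) unramified finite places assembles into the Euler products defining $\zeta_F(1-2j)$ for $j=1,\dots,m$, corresponding to the symplectic-type root data of rank $m$ over $F$. The ramified finite places split into two classes: places $v\mid \Delta'$, where $I_v$ is the unit group of a maximal order in a division algebra and a standard local computation with Bruhat-Tits gives the factor $q_v^j + (-1)^j$, and the $p$-adic places $v\mid p$ with $v\nmid \Delta'$, where $I_v$ is a hyperspecial unramified quaternionic unitary group and the analogous local computation gives $q_v^j+1$. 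Multiplying the local factors through $j=1,\dots,m$ produces exactly the product in \eqref{Mass}, while the Tamagawa number $\tau(I)=\tau(G)$ (using the inner form equality) and the normalization conventions at $\infty$ account for the remaining global factors.

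The main obstacle is the careful bookkeeping of measure normalizations: one must match the Tamagawa measure on $I$ against the product of local measures assigning the correct volume to each parahoric, correctly separate the contribution of the similitude factor from that of the derived group (since $\tau$ and the zeta values refer to different quotients), and reconcile the functional equation of $\zeta_F$ at $1-2j$ with the combinatorial sign and the power of $2$ arising at the $d$ real places of $F$. The local computations at the finitely many places $v\mid p\Delta'$ are classical but must be done in the exact normalization compatible with the uniformization, and this reconciliation is where the technical heart of the argument lies.
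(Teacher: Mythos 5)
Your proposal takes a genuinely different route from the paper. The paper's proof is very short: it decomposes $\mathbf M_{{\sf K}}\otimes k$ into connected components $\mathcal M_n$ indexed by $n\in(\Z/N\Z)^{\times}$ (coming from the choice of primitive $N$th root of unity in the level structure), invokes the existing geometric mass formula of \cite[Thm.~1.3]{Yu2} for each $\mathcal M_n^{\rm sp}$ --- which already gives the product over $j$ and the factor $\lvert G^1(\Z/N\Z)\rvert$ where $G^1=\ker(c)$ --- and then assembles the similitude factor via $\lvert G(\Z/N\Z)\rvert=\lvert(\Z/N\Z)^{\times}\rvert\cdot\lvert G^1(\Z/N\Z)\rvert$. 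No Rapoport--Zink uniformization or Tamagawa number computation is needed at this point in the paper. Your approach, by contrast, is essentially a sketch of how one would prove the cited mass formula from scratch: uniformize $\mathbf M_{{\sf K}}^{\rm sp}=\mathcal C(x)$ as a class set for $I$, note that torsion-freeness of the level ($N\geq 3$) turns the class number into a mass, peel off the level index $\lvert G(\Z/N\Z)\rvert$, identify $I\cong\GU_{D'}(V',\psi')$, and then evaluate the mass as a product of local volumes giving the $\zeta_F(1-2j)$ Euler factors and the corrections $q_v^j+(-1)^j$ at $v\mid\Delta'$ and $q_v^j+1$ at $v\mid p$, $v\nmid\Delta'$. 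Your framework is sound and the high-level structure is correct, but you have left unproved precisely what you yourself flag as ``the technical heart'': the local parahoric volume computations, the archimedean normalization producing $2^{-md}$ and the sign $(-1)^{dm(m+1)/2}$, and the matching of measures with the uniformization. Those are nontrivial and are exactly the content of the reference the paper cites, so your route is longer if written out in full, though more self-contained; the paper's route is shorter but black-boxes the mass formula.
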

\begin{proof}
The moduli space $\mathbf M_{{\sf K}} \otimes k$ is decomposed as the union $\bigsqcup_{n \in (\Z/N\Z)^{\times}} \mathcal M_{n}$ 
where $\mathcal M_{n}$ is the moduli space over $k$ of $2dm$-dimensional principally polarized $O_B$-abelian varieties with a symplectic $O_B$-linear level $N$ structure with respect to the primitive $N$th root of unity  $\exp({\frac{2\pi i n}{N}})$.

The cardinality of the superspecial locus of $\mathcal M_n$ is given by a \emph{mass formula} \cite[Thm.~1.3]{Yu2}, 
where the factor $\lvert G(\Z/N\Z)\rvert$ in (\ref{Mass}) was replaced by the cardinality $\lvert G^1(\Z/N\Z) \rvert$ of the group of $\Z/N\Z$-values of $G^1=\ker(c :  G \rightarrow \mathbf G_m)$. 
Thus the assertion  (\ref{Mass})  follows from  the above decomposition and the equality 
$\lvert G(\Z/N\Z) \rvert = \lvert (\Z/N\Z)^{\times} \rvert  \cdot \lvert G^1_{\Z}(\Z/N\Z)\rvert.$  

Note that in \cite{Yu2} the superspecial locus  of $\mathcal M_{n}$ was implicitly assumed to be non-empty, 
which is true as mentioned above.  
\end{proof}
\subsection{Number of isomorphism classes of  simple $k[I(p)]$-modules}   
 An explicit description of the inner form $I$ associated to a point $x \in \mathbf M_{{\sf K}}^{{\rm sp}}$ was given in   \cite[Proposition 5.2]{Yu2} using Dieudonn\'{e} theory. 
Further, a straightforward computation shows the following description of  the  finite  group  $I(p)$.    
We put $f_v:=[F_v: \Q_p]$ and $q_v:=N(v)=p^{f_v}$ for each place  $v$ of $F$ with $v \mid p$. 
   Note that the condition $v \nmid \Delta'$ (resp. $v \mid \Delta'$) is equivalent to that  $f_v$ is even (resp. $f_v$ is odd). 
    We write $\Res_{\F_{q}/\F_p}\GL_{m, {\F_{q}}}$ for the Weil restriction for a field extension $\F_q/\F_p$. 
 For a $v \mid \Delta'$ and an $\F_p$-algebra $R$, we write $( \ \cdot \ ) ^{\bar{\tau}}$ for the automorphism on $\Mat_m(\F_{q_v^2} \otimes _{\Z_p} R)$ induced by ${\rm Frob}_{q_v} \otimes_{\F_p} \id$. 
   We define a group scheme $L_{I}$ 
   over  $\F_p$  as the subgroup scheme of  
   \begin{align*} \left(\prod_{v \mid p, v \nmid \Delta'}
\Res_{\F_{q_v}/\F_p}\GL_{m, \F_{q_v}} 
\times 
\prod_{v \mid p,  v \mid \Delta'}
\Res_{\F_{q^2_v}/\F_p}\GL_{m, \F_{q^2_{v}}} \right)
\times \mathbb G_{m, \F_p}
\end{align*} 
 whose $R$-points consist of 
 the elements $((A_v)_v,r)$ satisfying 
 $(A_v^t)
  A_v^{\bar{\tau}}=rI_m$ for each $v \mid p, v \mid \Delta'$. 
  \begin{prop}\label{barG}
   One has  isomorphisms of finite  groups 
 \begin{align*}
 I(p) & \xrightarrow{\sim} 
 L_{I}(\F_p) 
 \\
 & \xrightarrow{\sim}
  \{((A_v)_v, r) \in \Big(\prod_{v \mid p, v\nmid \Delta'}\GL_m(\F_{q_v})  \times \prod_{v \mid p, v\mid \Delta'}
 \GL_m(\F_{q^2_v})\Big) \times \F_p^{\times} 
 \\
 & \hspace{6cm} : A_v^t  A_v^{\bar{\tau}}=rI_m \ {\rm for}  \ v\mid \Delta'\}. 
 \end{align*}
  \end{prop}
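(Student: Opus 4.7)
My plan is to compute $I(p) = I(\Z_p)/U_p$ by passing to the associated graded of the Hodge filtration on the contravariant Dieudonn\'e module, using the explicit Dieudonn\'e-theoretic description of $I$ recalled in \cite[Prop.~5.2]{Yu2}. The second isomorphism in the statement is a tautological unpacking of the Weil restriction together with the closed condition $A_v^tA_v^{\bar\tau}=rI_m$, so the real content lies in the first isomorphism $I(p)\xrightarrow{\sim}L_I(\F_p)$.

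Since $p$ is unramified in $B$, one has $B\otimes_\Q\Q_p\cong\prod_{v\mid p}\Mat_2(F_v)$. Morita equivalence applied to the $O_B\otimes\Z_p$-action on $\D(\mathcal A_x)$ yields a decomposition $\D(\mathcal A_x)=\bigoplus_{v\mid p} M_v$ into $O_{F_v}\otimes_{\Z_p}W$-modules of rank $2m$, compatible with Frobenius $\sfF$ and with the polarization pairing (positivity of $*$ together with $F$ totally real forces $*$ to fix each place $v\mid p$). The Hodge filtration decomposes accordingly, and $O_{F_v}\otimes_{\Z_p}k\cong\prod_{\tau\colon F_v\hookrightarrow K}k$ further splits each summand $\Fil^1 M_v(k)$ and $M_v(k)/\Fil^1 M_v(k)$ along the $f_v$ embeddings. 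The description in \cite[Prop.~5.2]{Yu2} identifies $I(\Z_p)=I(\Q_p)\cap G(W)$ with the group of $B\otimes\Q_p$-linear, $\sfF$-commuting $\psi$-similitudes of $\D(\mathcal A_x)$, so the reduction map $\varpi\colon I(\Z_p)\to L_x$ in (\ref{pi}) restricts place by place.

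The local computation then proceeds as follows. At a superspecial point both Newton slopes equal $1/2$, so locally $\sfF$ identifies the ``halves'' of $M_v$ associated to $\Fil^1M_v(k)$ and to $M_v(k)/\Fil^1M_v(k)$ via a semilinear isomorphism. If $f_v$ is even, i.e.~$v\nmid\Delta'$, the Frobenius orbits on $\{\tau\colon F_v\hookrightarrow W\}$ are internal to each half; consequently the commutant of $\sfF$ on the associated graded, modulo $U_p$, descends to $\GL_m$ over the residue field $\F_{q_v}$, contributing the factor $(\Res_{\F_{q_v}/\F_p}\GL_{m,\F_{q_v}})(\F_p)$. If $f_v$ is odd, i.e.~$v\mid\Delta'$, the relevant iterate of $\sfF$ exchanges the two graded summands of $M_v(k)$; the field of invariants of the induced Frobenius is then $\F_{q_v^2}$, and the polarization — which identifies one graded piece with the dual of the other — translates into the hermitian relation $A_v^tA_v^{\bar\tau}=rI_m$, producing $(\Res_{\F_{q_v^2}/\F_p}\GL_{m,\F_{q_v^2}})(\F_p)$ together with that constraint. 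A single similitude factor $r\in\F_p^\times$ is common to all places, accounting for the $\mathbb G_{m,\F_p}$-component of $L_I$.

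The main obstacle is the case $v\mid\Delta'$: one must carefully track the interaction of $\sfF$ and the polarization on the two graded summands of $M_v(k)$ at a superspecial point to justify both the jump of the residue field of invariants to $\F_{q_v^2}$ and the precise hermitian form of the resulting polarization constraint. Once this local analysis is secured at every $v\mid p$, assembling across places yields the desired isomorphism $I(p)\xrightarrow{\sim}L_I(\F_p)$: injectivity is built into $U_p=\ker\varpi$, while surjectivity follows from the explicit local lifts produced along the way.
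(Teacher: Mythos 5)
Your proposal takes essentially the same route as the paper, which itself defers to the Dieudonn\'e-theoretic description of $I$ in \cite[Prop.~5.2]{Yu2} and calls the remaining reduction to $I(p)$ a ``straightforward computation'': you correctly decompose $\D(\mathcal A_x)$ via Morita and the splitting $B\otimes\Q_p\cong\prod_{v\mid p}\Mat_2(F_v)$, track how $\sfF$ permutes the $\tau$-graded pieces, and identify the parity of $f_v$ (equivalently $v\nmid\Delta'$ vs.\ $v\mid\Delta'$) as governing whether the orbit closes over $\F_{q_v}$ or over $\F_{q_v^2}$ with the hermitian constraint from the polarization. The only thing to note is that you flag, rather than carry out, the local computation at $v\mid\Delta'$ (the $\sfF^{f_v}$-swap of the graded summands and the resulting relation $A_v^tA_v^{\bar\tau}=rI_m$); that step is exactly what the paper also leaves implicit, and your outline of it is sound.
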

Now we  recall that an element $g$ in a finite group is said to be $p$-regular if its order is prime-to-$p$. 
By \cite[Cor.~3 of Thm.~42]{SerreL}, the number $\lvert \Irr(I(p))\rvert $ of pairwise non-isomorphic  simple  $k[I(p)]$-modules coincides with the number of $p$-regular conjugacy classes of $I(p)$.  
We will compute this number. 

The derived subgroup $L'_{I}$ of $L_I$ is isomorphic to 
the product 
\[\prod_{v\mid p,v \nmid\Delta'} 
\Res_{\F_{q_v}/\F_p}\SL_{m,\F_{q_v}} \times
 \prod_{v\mid p,v \mid\Delta'}
 \Res_{\F_{q_v}/\F_p}
 {\rm SU}_{m, \F_{q_v}}.\]
 In particular, $L'_{I}$ is simply-connected. 
We write $Z$ for the (connected) center of $L_I$. 
Then, by \cite[Thm. 3.7.6.(ii)]{Carter}, the number of $p$-regular conjugacy classes of $L_{I}(\F_p)$ is equal to $p^l \cdot\lvert Z(\F_p) \rvert$ where $l$ is the semisimple rank of $L_I$.  

Since the ranks of $\SL_{m, \F_{q_v}}$ and $\SU_{m,\F_{q_v}}$   are  equal to $
m-1$, 
the rank of $L'_{I, \F_p}$ is equal to 
 \[\sum_{v\mid p,v \nmid\Delta'}f_v(m-1)+ \sum_{v\mid p,v \mid\Delta'}f_v(m-1)=d(m-1).\]
Note that the group $Z(\F_p)$ of $\F_p$-values is isomorphic to the center  of the finite group $L_I(\F_p)$ (cf.\cite[Prop. 3.6.8]{Carter}). 
Hence, by Proposition \ref{barG}, one has
\begin{align*}
Z(\F_p) \xrightarrow{\sim} \prod_{v\mid p,v \nmid\Delta'}\F_{q_v}^{\times} \times  
\left\lbrace((x_v)_v,r) \in 
 \left(\prod_{v\mid p,v \mid\Delta'} 
 \F_{q_v^2}^{\times} \right)\times \F_p^{\times}: 
 {\rm N}(x_v)=r \right\rbrace
  \end{align*}
  where ${\rm N}$ is the norm map ${\rm N}_{\F_{q^2_v}/\F_{q_v}}
   : \F_{q^2_v}^{\times} \to \F_{q_v}^{\times}, \  x \mapsto x^{\bar{\tau}}x$.
  One has  
\[\lvert {\rm N}^{-1}(r) \rvert  =
(q^2_v-1)/(q_v-1)=q_v+1\]
 for each $r \in \F_p^{\times}$, 
and   
hence we have 
\begin{align*}
\lvert Z(\F_p)\rvert & 
=\prod_{v \mid p, v \nmid \Delta'} \lvert \F_{q_v}^{\times} \rvert
\cdot 
\prod_{r \in \F_p^{\times}} 
\left(\prod_{v \mid p, v\mid \Delta'}
\lvert
{\rm N}^{-1}(r)\rvert\right)
\\
& =(p-1)\cdot \prod_{v\mid p,v \nmid\Delta'}(q_v-1) \cdot \prod_{v\mid p,v \mid\Delta'} (q_v+1).\end{align*}
Therefore, one gets an equality 
\begin{align}\label{irr}
\lvert \Irr (I(p))\rvert =
p^{d(m-1)}\cdot (p-1)\cdot \prod_{v\mid p,v\nmid\Delta'}
(q_v-1) \cdot 
\prod_{v\mid p,v \mid\Delta'}(q_v+1).
\end{align}
\subsection{Upper bound of the dimensions of simple $k[I(p)]$-modules}
Since $L_I$ is connected reductive group over $\F_p$, 
 the finite group $L_I(\F_p)=I(p)$  has a structure of a split $BN$-pair of characteristic $p$ by \cite[$\S$1.18]{Carter}.
Hence   
 the dimensions of  simple $k[I(p)]$-modules are no larger than the order of a $p$-Sylow subgroup of $I(p)$ by 
   \cite[Cor.~3.5 and Cor.~5.11]{Curtis}.\footnote{Curtis assumed that the split $BN$-pair is  ``restricted" (\cite[Def.3.9]{Curtis}), which is a technical condition shown to be implied just by the axioms of split $BN$-pairs by Richen \cite{Richen}.}  

Now we compute the order of a $p$-Sylow subgroup of $I(p)$. 
We set 
\[{\rm U}_m(\F_{q_v}):=\{A \in \GL_m(\F_{q^2_v}) : A^tA^{\bar{\tau}}= I_m\}.\]
 By Proposition \ref{barG}, one has an 
exact sequence 
\[1 \to  \prod_{v\mid p,v \nmid\Delta'}\GL_m(\F_{q_v}) \times \prod_{v\mid p,v \mid\Delta'}
 {\rm U}_m(\F_{q_v})  
 \to I(p) \xrightarrow{{\rm pr}} 
 \F_p^{\times} \to 1,\]
 where ${\rm pr}$ is given by $((A_v)_{v\mid p},r) \mapsto r$. 
   Further, for some $p \nmid n,n' \in \Z_{>0}$ one has  
   \[
\lvert \GL_{m}(\mathbb F_{q_v})\rvert=n\cdot  q_v^{\frac{m(m-1)}{2}} \ {\rm and} \ 
\lvert {\rm U}_m
(\F_{q_v})
\rvert  
=n'\cdot q_{v}^{\frac{m(m-1)}{2}}.\]   
Therefore a $p$-Sylow subgroup of 
$I(p)$
 has order 
\[
\prod_{v \mid p, v\nmid \Delta'}q_v^{\frac{m(m-1)}{2}}\cdot\prod_{v \mid p, v\mid\Delta'}q_v^{\frac{m(m-1)}{2}}
=\prod_{v \mid p}q_v^{\frac{m(m-1)}{2}}
=p^{{\frac{dm(m-1)}{2}}}.
\]

Thus,   
the dimension of a simple $k[I(p)]$-module $V_{\tau}$ satisfies  the   inequality 
\begin{align}\label{dim}
\dim_{k}V_{\tau} \leq 
p^{{\frac{dm(m-1)}{2}}}.
\end{align}
\subsection{Explicit upper bound of the number of Hecke eigensystems}
%\begin{thm}\label{bound}
%Set 
%\[C:=\frac{(-1)^{dm(m+1)/2}}{2^{md}}\cdot \prod_{i=1}^m\left(\zeta_F(1-2i)\prod_{v\nmid p, v\mid \Delta'}(q_v^i+(-1)^i)\right).\]
%\end{thm}
\begin{thm}\label{intro2}
Let $\Delta'$ be the discriminant of  $D'$ over $F$ {\rm ($\S$\ref{card})}. 
We set 
\[C_B:=\frac{(-1)^{dm(m+1)/2}}{2^{md}}\cdot \prod_{i=1}^m\left(\zeta_F(1-2i)\prod_{v\nmid p, v\mid \Delta'}(q_v^i+(-1)^i)\right).\]
 Then one has the inequality
\begin{align*}
& \mathscr N(B,N,p) \leq C_B\cdot \lvert G(\mathbb Z/N\mathbb Z)\rvert\cdot p^{\frac{d(m+2)(m-1)}{2}} \cdot (p-1)
\\
 & \cdot \prod _{v\mid p, v\nmid \Delta'}(q_v-1) \cdot \prod_{v\mid p,v \mid \Delta'}(q_v+1)\cdot\prod_{j=1}^{m}
\left[\prod_{v \mid p, v \nmid \Delta'}(q_v^j+1)\cdot\prod_{v \mid p, v \mid \Delta'}(q_v^j+(-1)^j)  \right].\end{align*} 
In particular, if we keep $F$, $B$, $m$, and $N$ fixed and let $p>1$ vary, then 
\begin{align*}
\mathscr N(B,N,p) = O(p^{dm^2+dm+1}) \  {\rm{as}} \ p \to \infty.\end{align*}
\end{thm}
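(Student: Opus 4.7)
The strategy is to combine the general bound of Corollary \ref{number} with the three explicit formulas already established in the preceding subsections, then carry out the asymptotic count of powers of $p$. Since the preceding discussion has verified that Theorem \ref{intro} holds for $\mathbf M_{\sf K}$ for every prime $p$ (including $p=2$, via the second proof of condition (3) in Theorem \ref{GK}), and since \cite[Lem.~4.2~(1)]{Yu2} identifies $\mathcal C(x)$ with $\mathbf M_{\sf K}^{\rm sp}$ for $x\in \mathbf M_{\sf K}^{\rm sp}$, Corollary \ref{number} yields
\[
\mathscr N(B,N,p)\ \leq\ \bigl|\mathbf M_{\sf K}^{\rm sp}\bigr|\cdot\sum_{V_\tau\in\Irr(I(p))}\dim_k V_\tau,
\]
which is inequality \eqref{upp}.

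The first step is to bound the inner sum by $\lvert\Irr(I(p))\rvert\cdot\max_\tau\dim_k V_\tau$ and substitute the explicit expressions we have already obtained. Formula \eqref{irr} gives
\[
\lvert\Irr(I(p))\rvert=p^{d(m-1)}\cdot(p-1)\cdot\prod_{v\mid p,\,v\nmid\Delta'}(q_v-1)\cdot\prod_{v\mid p,\,v\mid\Delta'}(q_v+1),
\]
while the split $BN$-pair bound \eqref{dim} gives $\dim_k V_\tau\leq p^{dm(m-1)/2}$. Multiplying these together, the $p$-power contributions combine as $p^{d(m-1)}\cdot p^{dm(m-1)/2}=p^{d(m-1)(m+2)/2}$, which is precisely the exponent appearing in the statement.

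The second step is to substitute the mass formula \eqref{Mass} for $\lvert\mathbf M_{\sf K}^{\rm sp}\rvert$ and to separate, inside $\prod_{v\mid \Delta'}(q_v^j+(-1)^j)$, the factors coming from places $v\nmid p$ (these are absorbed into the constant $C_B$) from those with $v\mid p$ (these stay explicit, alongside the factor $\prod_{v\mid p,\,v\nmid\Delta'}(q_v^j+1)$). Packaging everything together gives exactly the inequality claimed in Theorem \ref{intro2}. No new idea is needed; this is a bookkeeping step, and the main thing to check is that the factorization of the product $\prod_{v\mid \Delta'}$ into the $v\nmid p$ part (constant in $p$) and the $v\mid p$ part (polynomial in $p$) is done consistently.

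For the asymptotic statement, the plan is to hold $F$, $B$, $m$, $N$ fixed (so $d$, $C_B$, $\lvert G(\Z/N\Z)\rvert$, and $\Delta'$ are all fixed) and to add up the degrees in $p$ of every explicit factor, using that for each $v\mid p$ one has $q_v=p^{f_v}$ with $\sum_{v\mid p}f_v=d$. The factor $p^{d(m+2)(m-1)/2}$ contributes degree $d(m^2+m-2)/2$; the factor $(p-1)$ contributes degree $1$; the product $\prod_{v\mid p}(q_v\pm 1)$ contributes degree $d$; and the double product $\prod_{j=1}^m\prod_{v\mid p}(q_v^j\pm\cdots)$ contributes degree $\sum_{j=1}^m jd=dm(m+1)/2$. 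Summing these four contributions yields
\[
\frac{d(m^2+m-2)}{2}+1+d+\frac{dm(m+1)}{2}=dm^2+dm+1,
\]
which gives the claimed $O(p^{dm^2+dm+1})$. The only place any care is needed is in verifying that the exponent of $p$ really is correct, and the calculation above confirms it; there is no serious obstacle in this proof, as the work has already been done in the three previous subsections.
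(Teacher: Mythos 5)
Your proposal is correct and follows exactly the same route as the paper: combine the bound of Corollary \ref{number} (equivalently \eqref{upp}) with the mass formula \eqref{Mass}, the count \eqref{irr} of irreducibles, and the dimension bound \eqref{dim}, and then add up the $p$-degrees using $\sum_{v\mid p}f_v=d$ to get $dm^2+dm+1$. The arithmetic checks out, including the exponent bookkeeping $p^{d(m-1)}\cdot p^{dm(m-1)/2}=p^{d(m+2)(m-1)/2}$ and the splitting of $\prod_{v\mid\Delta'}$ into its $v\nmid p$ part (absorbed into $C_B$) and its $v\mid p$ part.
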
 
\begin{proof}
The inequality follows from  (\ref{upp}), (\ref{Mass}), (\ref{irr}), and (\ref{dim}). 
As $p$ becomes large, one has 
\begin{align*}
 \mathscr N(B,N,p)= & O\left( p^{\frac{d(m+2)(m-1)}{2}}\cdot p 
\cdot 
 \prod_{v\mid p}
 q_v 
\cdot
\prod_{j=1}^{m}
\left(\prod_{v \mid p}q_v^j
\right)\right)
\\
= &
O\left(p^{\frac{d(m+2)(m-1)}{2}}
\cdot p
\cdot p^{d}
\cdot p^{\frac{dm(m+1)}{2}}
\right)
=O\left(p^{dm^2+dm+1}\right). 
\end{align*} 
\end{proof}
\begin{remark}\label{Siegel}
When $F=\Q$,  
$B=\Mat_2(\Q)$ and $*$ is the transpose, then the Morita equivalence implies that the moduli space $\mathbf M_{{\sf K}}$ parameterizes principally polarized abelian varieties of dimension $m$. 
 In other words, $\mathbf M_{\sf K}$ is the Siegel modular variety of genus $m$, and the spaces $H^0(\mathbf M_{\sf K}^{\Sigma} \otimes k, \mathscr V^{{\rm can}}(\xi))$ are regarded as spaces of Siegel modular forms$\pmod p$.   
 In this case, the place $v \mid p$ is just $p$ and the  discriminant is $\Delta' =(p)$. 
% In particular, there is no place $v \nmid p$ with 
 %$v \mid \Delta'$. 
 Hence one has the inequality
\begin{align*}
 \mathscr N(B,N,p) \leq C_B\cdot \lvert \GSp_{2m}(\mathbb Z/N\mathbb Z)\rvert\cdot p^{\frac{(m+2)(m-1)}{2}} \cdot  (p-1)(p+1)
 \cdot\prod_{j=1}^{m}
(p^j+(-1)^j)\end{align*} 
with $C_B
 =\frac{(-1)^{m(m+1)/2}}{2^{m}}
 \cdot 
 \prod_{i=1}^m
 \zeta_{\Q}(1-2i)$. 
This agrees with Ghitza's result \cite[Thm.~1]{Ghitza2}.
\end{remark}
\begin{remark} 
In \cite{Reduzzi}, Reduzzi gave an upper bound of the number of Hecke eigensystems of PEL modular forms$\pmod p$ associated to an imaginary quadratic extension $\Q(\sqrt \alpha)/\Q$ (type A). 
Note that a mass formula for 
 type A has not been known. 
 So   
  he embedded the  superspecial locus into the Siegel modular variety,  and used  Ekedahl's mass formula \cite{Ekedahl}  to give a bound. 
Also,  a mass formula for definite quaternionic type  (type D) is not known. 
 If one has  mass formulae for type A or D, it should be possible to obtain sharper bounds of the numbers of Hecke eigensystems of these types.
\end{remark}

Conflict of interest statement: on behalf of all authors, the
corresponding author states that there is no conflict of interest. 

Data availability statement:all data generated or analysed during this
study are included in this published article.

\end{document}